\numberwithin{equation}{section}
\theoremstyle{definition}\newtheorem{definition}{Definition}[section]
\newtheorem{remark}[definition]{Remark}
\newtheorem{proposition}[definition]{Proposition}
\newtheorem{lemma}[definition]{Lemma}
\newtheorem{theorem}[definition]{Theorem}
\newtheorem{corollary}[definition]{Corollary}
\newcommand{\M}{\operatorname{M}}
\newcommand{\C}{\mathbb{C}}
\newcommand{\F}{\mathbb{F}}
\newcommand{\cR}{\mathcal{R}}
\newcommand{\actson}{\curvearrowright}
\newcommand{\SL}{\operatorname{SL}}
\newcommand{\rL}{\mathord{\text{\rm L}}}
\newcommand{\Aut}{\operatorname{Aut}}
\newcommand{\N}{\mathbb{N}}
\newcommand{\T}{\mathbb{T}}
\newcommand{\Z}{\mathbb{Z}}
\newcommand{\cF}{\mathcal{F}}
\newcommand{\cA}{\mathcal{A}}
\newcommand{\id}{\mathord{\operatorname{id}}}
\newcommand{\si}{\sigma}
\newcommand{\cE}{\mathcal{E}}
\newcommand{\recht}{\rightarrow}
\newcommand{\cU}{\mathcal{U}}
\newcommand{\vphi}{\varphi}
\newcommand{\R}{\mathbb{R}}
\newcommand{\al}{\alpha}
\newcommand{\eps}{\varepsilon}
\newcommand{\Tr}{\operatorname{Tr}}
\newcommand{\ovt}{\mathbin{\overline{\otimes}}}
\newcommand{\B}{\operatorname{B}}
\newcommand{\om}{\omega}
\newcommand{\cP}{\mathcal{P}}
\newcommand{\cZ}{\mathcal{Z}}
\newcommand{\Ker}{\operatorname{Ker}}
\newcommand{\cK}{\mathcal{K}}
\newcommand{\cH}{\mathcal{H}}
\newcommand{\cJ}{\mathcal{J}}
\newcommand{\ot}{\otimes}
\newcommand{\cL}{\mathcal{L}}
\newcommand{\Ad}{\operatorname{Ad}}
\newcommand{\cG}{\mathcal{G}}
\newcommand{\cM}{\mathcal{M}}
\newcommand{\dpr}{^{\prime\prime}}
\newcommand{\vphitil}{\widetilde{\vphi}}
\newcommand{\lspan}{\operatorname{span}}
\newcommand{\Mtil}{\widetilde{M}}
\newcommand{\D}{\operatorname{D}}
\newcommand{\Stab}{\operatorname{Stab}}
\newcommand{\cN}{\mathcal{N}}
\newcommand{\cS}{\mathcal{S}}
\newcommand{\Om}{\Omega}
\newcommand{\supp}{\operatorname{supp}}
\newcommand{\cC}{\mathcal{C}}
\newcommand{\cO}{\mathcal{O}}
\newcommand{\cNtil}{\widetilde{\mathcal{N}}}
\newcommand{\cHtil}{\widetilde{\mathcal{H}}}
\newcommand{\otalg}{\otimes_{\text{\rm alg}}}
\newcommand{\otmin}{\otimes_{\text{\rm min}}}
\newcommand{\ubar}{\overline{u}}
\newcommand{\abar}{\overline{a}}
\newcommand{\Gammah}{\widehat{\Gamma}}
\newcommand{\Ktil}{\widetilde{K}}
\newcommand{\cJtil}{\widetilde{\mathcal{J}}}
\newcommand{\cT}{\mathcal{T}}
\newcommand{\vbar}{\overline{v}}
\newcommand{\psitil}{\widetilde{\psi}}
\newcommand{\cKtil}{\widetilde{\mathcal{K}}}
\newcommand{\qtil}{\widetilde{q}}
\newcommand{\Omtil}{\widetilde{\Om}}
\newcommand{\SO}{\operatorname{SO}}
\newcommand{\SU}{\operatorname{SU}}
\newcommand{\m}{\mathord{\text{\rm m}}}
\newcommand{\Vtil}{\widetilde{V}}
\newcommand{\Wtil}{\widetilde{W}}
\newcommand{\Btil}{\widetilde{B}}
\newcommand{\Atil}{\widetilde{A}}
\newcommand{\Ptil}{\widetilde{P}}
\newcommand{\op}{^\text{\rm op}}
\newcommand{\cb}{_\text{\rm cb}}
\newcommand{\nor}{_\text{\rm n}}
\newcommand{\sing}{_\text{\rm s}}
\newcommand{\cCs}{\mathcal{C}_{\text{\rm\tiny s}}}
\newcommand{\cCgms}{\mathcal{C}_{\text{\rm\tiny gms}}}
\newcommand{\Sfactor}{\mathcal{S}_{\text{\rm\tiny factor}}}
\newcommand{\Seqrel}{\mathcal{S}_{\text{\rm\tiny eqrel}}}
\newcommand{\bim}[3]{\mathord{\raisebox{-0.4ex}[0ex][0ex]{\scriptsize $#1$}{#2}\hspace{-0.05ex}\raisebox{-0.4ex}[0ex][0ex]{\scriptsize $#3$}}}
\begin{document}

\begin{center}
{\boldmath\LARGE\bf Unique Cartan decomposition for II$_1$ factors\vspace{0.5ex}\\
arising from arbitrary actions of free groups}

\bigskip

{\sc by Sorin Popa\footnote{Mathematics Department, UCLA, CA 90095-1555 (United States), popa@math.ucla.edu\\
Supported in part by NSF Grant DMS-1101718} and Stefaan Vaes\footnote{KU~Leuven, Department of Mathematics, Leuven (Belgium), stefaan.vaes@wis.kuleuven.be \\
    Supported by ERC Starting Grant VNALG-200749, Research
    Programme G.0639.11 of the Research Foundation --
    Flanders (FWO) and KU~Leuven BOF research grant OT/08/032.}}
    
%
\end{center}

\begin{abstract}\noindent
We prove that for any free ergodic probability measure preserving
action $\F_n \actson (X,\mu)$ of a free group on $n$ generators $\F_n$, $2\leq n \leq \infty$, the associated group measure space II$_1$ factor
$\rL^\infty(X) \rtimes \F_n$ has $\rL^\infty(X)$ as its unique Cartan subalgebra, up to unitary conjugacy. We deduce that group measure space II$_1$ factors arising from
actions of free groups with different number of generators are never isomorphic.
We actually prove unique Cartan decomposition results  for
II$_1$ factors arising from arbitrary actions of a much larger family of groups, including all free products of amenable groups and their direct products.
\end{abstract}

\section{Introduction and main results}

A crossed product type construction due to Murray and von Neumann \cite{MvN36} associates to any free ergodic probability measure preserving (pmp) action $\Gamma \actson (X,\mu)$ of a countable group $\Gamma$,
a II$_1$ factor denoted $ \rL^\infty(X) \rtimes \Gamma$ and
called the {\it group measure space} algebra  of $\Gamma \curvearrowright X$. A more general, groupoid-version of this
construction associates a II$_1$ factor $\rL \cR$ to any countable ergodic pmp equivalence relations $\cR$ on $(X,\mu)$
(\cite{FM75}).  The two algebras coincide when $\cR$ is given by the orbits of the free ergodic action $\Gamma \curvearrowright X$,
showing that group actions having the same orbits give the same II$_1$ factor. Moreover,
both $\rL^\infty(X)\rtimes \Gamma$ and $\rL \cR$ contain $\rL^\infty(X)$ as a
{\it Cartan subalgebra}, i.e. a maximal abelian $^*$-subalgebra whose normalizer generates
the II$_1$ factor, while by \cite{FM75}
two countable ergodic pmp
equivalence relations $\cR_1, \cR_2$ are isomorphic iff there exists an isomorphism of the associated II$_1$  factors taking
the corresponding Cartan subalgebras one onto the other.

The classification of the
algebras $\rL^\infty(X)\rtimes \Gamma$, $\rL \cR$ in terms of their building data, $\Gamma \curvearrowright X$, $\cR$,
is a notoriously hard problem which, over the years, has led to a fruitful interplay between
operator algebras and functional analysis, group theory (geometric, measured, etc), representation theory, Lie group theory,
ergodic theory, etc.

The dichotomy
amenable-nonamenable is particularly strong in this framework: by a celebrated theorem of Connes \cite{Co75}, all II$_1$
factors $\rL^\infty(X)\rtimes \Gamma$, $\rL \cR$ with $\Gamma, \cR$ amenable
are isomorphic (in fact, by \cite{CFW81}, there is just one amenable equivalence relation $\cR$!); but
nonamenable group actions ``tend to be'' recognizable from the
isomorphism class of their associated algebra. In fact, the prevailing point of view in recent years
has been to approach the nonamenable case of this classification problem
as a rigidity paradigm,
seeking to prove that an isomorphism of group measure space II$_1$ factors forces the corresponding building data
(e.g., $\Gamma$,  $\cR$) to share some common properties, or even coincide.

There has been intense activity in this direction over the last decade, with the emergence of new tools
of investigation and the discovery of many surprising rigidity results. But one of the most intriguing
questions in this area, asking
whether an isomorphism $\rL^\infty(X)\rtimes \F_n \simeq \rL^\infty(Y)\rtimes \F_m$,
arising from two \emph{arbitrary}  free ergodic pmp actions $\F_n \curvearrowright X$,
$\F_m \curvearrowright Y$ of the free groups with $n$ and respectively $m$ generators,
forces $n=m$, has remained open. There was supporting evidence for this conjecture from results
in \cite{Po01} and \cite{OP07}, showing that this is indeed the case if the two actions
are either HT or compact. But this was not known for other actions, such as the Bernoulli actions
$\F_n \curvearrowright [0,1]^{\F_n}$.

We solve this problem here, in the affirmative.
More precisely, we prove that any group measure space II$_1$ factor
$M=\rL^\infty(X)\rtimes \F_n$, arising from an arbitrary free ergodic pmp
action $\F_n \curvearrowright X$,  ``remembers'' the associated equivalence relation $\cR_{\F_n}$. We do this
by showing that $M$ has unique Cartan subalgebra,
up to conjugacy by a unitary in $M$. This in turn reduces the problem to whether equivalence relations arising
from free ergodic pmp actions of free groups with different number of generators are always non-isomorphic, which does hold true by
a well known result  in \cite{Ga99},  \cite{Ga01}. Note that our result gives an answer to the wreath product version
of the famous free group factor problem: if $\rL(\Z \wr \F_n) \simeq \rL(\Z \wr \F_m)$
then $n=m$.
In fact, by combining our theorem with the work  in \cite{Bo09a,Bo09b}, we obtain a
complete classification of the amplifications of  II$_1$ factors arising
from Bernoulli actions of free groups, $(\rL^\infty([0,1]^{\F_n}) \rtimes \F_n)^t$,
for which we show that the number $(n-1)/t$ is a complete invariant.

Note that our result provides the first groups $\Gamma$  with the property that \emph{any}
group measure space II$_1$ factor $\rL^\infty(X)\rtimes \Gamma$, arising from
an \emph{arbitrary} free ergodic pmp $\Gamma$-action, has unique Cartan subalgebra,
up to unitary conjugacy, a class of groups that we call $\cC$-rigid. Indeed, the results in
\cite{OP07}, which were the first to provide a class of factors with unique Cartan  decomposition up to unitary conjugacy,
only covered group measure space II$_1$ factors arising from \emph{profinite} actions of $\F_n$.

We in fact prove $\cC$-rigidity for much larger classes of groups $\Gamma$ than the free groups. For instance,
we show that any weakly amenable group $\Gamma$ with nonzero first $\ell^2$-Betti number, $\beta^{(2)}_1(\Gamma)>0$,
is $\cC$-rigid. We conjecture that in fact any $\Gamma$ with at least one nonzero $\ell^2$-Betti number,
$\beta^{(2)}_n(\Gamma) > 0$, is $\cC$-rigid. Note that if this conjecture would be true then,
since the $\ell^2$-Betti numbers of groups are invariant under orbit equivalence (cf. \cite{Ga01}),
it would follow that $\beta_n^{(2)}(\Gamma)$ are isomorphism
invariants for arbitrary group measure space II$_1$ factors $\rL^\infty(X)\rtimes \Gamma$.

There is further supporting evidence for the above conjecture. For instance, in \cite{PV09} we proved that a fairly large class of free product groups $\Gamma = \Gamma_1 * \Gamma_2$, including all those where $\Gamma_1$ is an infinite property (T) group and $\Gamma_2$ is nontrivial, has the property that
$\rL^\infty(X) \rtimes \Gamma$ has a unique \emph{group measure space Cartan\footnote{A maximal abelian subalgebra $A$ of a II$_1$ factor $M$ is called a group measure space Cartan subalgebra if $M$ can be decomposed as a crossed product $M = A \rtimes \Lambda$. Not all Cartan subalgebras in II$_1$ factors are of this form.}} subalgebra for any $\Gamma$-action. We call groups $\Gamma$ with this property $\cCgms$-rigid. More generally, it was established in \cite{CP10} that all
groups that have at the same time a nonvanishing first $\ell^2$-Betti number and a nonamenable subgroup with the relative property (T), are $\cCgms$-rigid (see also the expository paper \cite{Va10b}).
Very recently it was shown in \cite{Io11} that $\rL^\infty(X) \rtimes \Gamma$ has a unique group measure space Cartan subalgebra if $\beta_1^{(2)}(\Gamma) > 0$ and $\Gamma \actson (X,\mu)$ is a \emph{rigid} (in the sense of \cite{Po01}) free ergodic pmp action.

One should point out that the unique Cartan decomposition results for
profinite actions of \cite{OP07,OP08} have been generalized in \cite{CS11,CSU11} to
show that group measure space II$_1$ factors $\rL^\infty(X) \rtimes \Gamma$ arising from \emph{profinite} free ergodic pmp
actions of any hyperbolic group or direct product of hyperbolic groups, have a unique Cartan subalgebra up to unitary conjugacy. In the follow-up paper [PV12], the main innovations of our article (Sections 4 and 5) are combined with the methods of [CS11,CSU11] to prove that any product of hyperbolic groups is $\cC$-rigid. So, the uniqueness of the Cartan subalgebra of $\rL^\infty(X) \rtimes \Gamma$ holds without assuming the profiniteness of the action $\Gamma \actson (X,\mu)$.

While a characterization of all $\cC$-rigid groups seems even difficult to guess, it would be very interesting to find other sufficient conditions
for this property to hold. As for
necessary conditions, let us point out that in \cite{CJ81} it was shown that any direct product $\Gamma = H \times G$
between a non-amenable group $G$ and a certain type of locally finite, infinite, non-commutative group $H$, is not $\cC$-rigid.
Another class of groups that are not $\cC$-rigid was found in \cite{OP08} and it consists of
certain semidirect products $\Gamma=H \rtimes G$, with $H$ abelian, notably $\Gamma=\Z^2 \rtimes \SL(2, \Z)$.
More generally, it was shown in \cite[Section 5.5]{PV09} that a semidirect product $\Gamma = H \rtimes G$ with $H$ infinite abelian, is never $\cC$-rigid.
We believe that in fact groups $\Gamma$ with an infinite amenable normal subgroup are never $\cC$-rigid. Since by \cite{CG85} (see also \cite[Theorem 7.2.(2)]{Lu02}) all $\ell^2$-Betti numbers of such groups $\Gamma$ vanish, this is compatible with the conjecture that all groups with at least one non-zero $\ell^2$-Betti number are $\cC$-rigid, as formulated above. On the other hand, it would be interesting to find examples of non $\cC$-rigid groups that admit no infinite amenable quasi-normal subgroup.

To state our results in more details, we first need some terminology.

\begin{definition}\label{def.weaklyamenable}
A \emph{Herz-Schur multiplier} on a countable group $\Gamma$ is a function $f : \Gamma \recht \C$ such that the corresponding map $u_g \mapsto f(g) u_g$ extends to a normal completely bounded map $m_f : \rL(\Gamma) \recht \rL(\Gamma)$. In that case we write $\|f\|\cb := \|m_f\|\cb$. A countable group $\Gamma$ is called \emph{weakly amenable} (see \cite{CH88}) if it admits a sequence of finitely supported Herz-Schur multipliers $f_n : \Gamma \recht \C$ that tend to $1$ pointwise and that satisfy $\limsup_n \|f_n\|\cb < \infty$. If $(f_n)$ can be chosen in such a way that $\limsup_n \|f_n\|\cb = 1$, we say that $\Gamma$ has the \emph{complete metric approximation property (CMAP),} see \cite{Ha78}.

Let $\Gamma$ be a countable group and $\eta : \Gamma \recht \cO(K_\R)$ an orthogonal representation. A \emph{$1$-cocycle for $\Gamma$ into the orthogonal representation $\eta$} is a map $c : \Gamma \recht K_\R$ satisfying $c(gh) = c(g) + \eta_g c(h)$ for all $g,h \in \Gamma$. We say that $c$ is \emph{proper} if $\|c(g)\| \recht \infty$ whenever $g \recht \infty$.

Following \cite[Definition 1.1]{Be89}, a unitary representation $\eta : \Gamma \recht \cU(K)$ is called \emph{amenable} if $\B(K)$ admits an $(\Ad \eta_g)_{g \in \Gamma}$-invariant state. A unitary representation $\eta : \Gamma \recht \cU(K)$ is called \emph{mixing} if for all $\xi,\xi' \in K$, we have that $\langle \eta_g\xi,\xi'\rangle \recht 0$ whenever $g \recht \infty$, i.e.\ when the matrix coefficients of $\eta$ tend to zero at infinity.
\end{definition}

\begin{theorem}\label{thm.Crigidgroups}
For all of the following groups $\Gamma$, all group measure space II$_1$ factors $M := \rL^\infty(X) \rtimes \Gamma$ with respect to arbitrary free ergodic pmp actions $\Gamma \actson (X,\mu)$ have $\rL^\infty(X)$ as their unique Cartan subalgebra up to unitary conjugacy.
\begin{enumerate}
\item All weakly amenable groups $\Gamma$ with $\beta_1^{(2)}(\Gamma) > 0$. More generally, all weakly amenable groups $\Gamma$ that admit an unbounded $1$-cocycle into a mixing nonamenable representation.
\item All weakly amenable groups $\Gamma$ that admit a proper $1$-cocycle into a nonamenable representation.
\end{enumerate}
Actually a more general statement holds: whenever $A \subset M$ is a maximal abelian subalgebra whose normalizer is a finite index subfactor of $M$, we must have that $A$ is unitarily conjugate to $\rL^\infty(X)$.
\end{theorem}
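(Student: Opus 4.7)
The plan is to reduce the entire statement to a single dichotomy, which is the technical core of the paper: for every von Neumann subalgebra $Q \subset pMp$ that is amenable as a finite von Neumann algebra, either $Q \prec_M \rL^\infty(X)$ in Popa's intertwining-by-bimodules sense, or else the normalizer $\cN_{pMp}(Q)\dpr$ is amenable relative to $\rL^\infty(X)$ inside $M$. The two clauses of the hypothesis on $\Gamma$ are tailored to produce precisely this dichotomy: the $1$-cocycle into a mixing (or, in case (2), arbitrary but proper) nonamenable orthogonal representation $\eta$ yields a Popa-type malleable deformation of $M$ that is ``transverse'' to $\rL^\infty(X)$ in the appropriate sense, while weak amenability supplies a net of finitely supported Herz-Schur multipliers with uniformly bounded $\|\cdot\|\cb$. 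The strategy for the dichotomy is: assuming $Q \not\prec_M \rL^\infty(X)$, one first shows that the deformation converges uniformly on the unit ball of $Q$ (Popa's transversality trick); then one twists the deformation by the Herz-Schur multipliers and extracts an ultraweak limit to produce an $\cN_{pMp}(Q)\dpr$-central state on $\langle M, e_{\rL^\infty(X)}\rangle$ that restricts to $\tau$ on $M$.

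Granting this dichotomy, the theorem follows very quickly. Apply it to $Q = A$, which is abelian and thus amenable, and let $P := \cN_M(A)\dpr$. By the hypothesis of the generalized statement, $P$ is a subfactor of $M$ with $[M:P] < \infty$. If $P$ were amenable relative to $\rL^\infty(X)$ inside $M$, the finite index would propagate this to $M$ itself being amenable relative to $\rL^\infty(X)$, which under $M = \rL^\infty(X) \rtimes \Gamma$ is equivalent to amenability of $\Gamma$ -- impossible, since any group admitting an unbounded $1$-cocycle into a nonamenable representation is nonamenable. The dichotomy therefore forces $A \prec_M \rL^\infty(X)$. Since $A$ is maximal abelian in $M$ and $\rL^\infty(X)$ is a Cartan, Popa's conjugacy criterion produces a unitary $u \in M$ with $u A u^* \subset \rL^\infty(X)$; maximal abelianness of $u A u^*$ then forces $u A u^* = \rL^\infty(X)$.

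The main obstacle is plainly the dichotomy itself. The delicate point is that the natural deformation-based argument yields relative amenability only for $Q$, whereas we need it for the full normalizer $\cN_{pMp}(Q)\dpr$. Bridging this gap requires that the conjugation action of the normalizer does not disturb the deformation on bimodules which are ``singular'' with respect to $\rL^\infty(X)$, and this is precisely where mixingness of $\eta$ (or the weaker properness of the cocycle) enters: it forces such bimodules to have vanishing asymptotic intersection with $\rL^\infty(X) \ovt \rL^\infty(X)\op$, so that the Herz-Schur multipliers can be used to average the approximate centrality from $Q$ out to the whole of $\cN_{pMp}(Q)\dpr$. Making this averaging quantitative, uniformly in the cb-norm bound from weak amenability, is the crux.
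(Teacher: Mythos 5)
Your two-paragraph plan gets the high-level structure right---the work is in a dichotomy, and the finite-index hypothesis on the normalizer closes off the unfavourable branch---but the dichotomy you invoke does not hold under the hypotheses of families (1) and (2). The clean alternative ``$Q \prec_M \rL^\infty(X)$ or $\cN_{pMp}(Q)\dpr$ amenable relative to $\rL^\infty(X)$'' is exactly Theorem~\ref{thm.main}, and its proof requires that the orthogonal representation $\eta$ carrying the $1$-cocycle be weakly contained in the regular representation: only then does left $P$-amenability of the bimodule $\cK^\eta$ upgrade, via Corollary~\ref{cor.weakcontainment}, to left $P$-amenability of $\rL^2(M) \ot_{\rL^\infty(X)} \rL^2(M)$, which is relative amenability of $P$. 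A non-amenable representation (even a mixing one) need not be weakly contained in the regular representation, so for the groups in Theorem~\ref{thm.Crigidgroups} the key technical input, Theorem~\ref{thm.source}, delivers only the weaker first alternative: left $P$-amenability of $q\cK^\eta$, not relative amenability of $P$. The finite-index hypothesis is indeed what kills this alternative, but it is spent at the bimodule level: Corollary~\ref{cor.passage-rel-amen} promotes left $P$-amenability to left $qMq$-amenability, and then a partial-isometry argument produces an $(\Ad \eta_g)_{g\in\Gamma}$-invariant state on $\B(K)$, contradicting non-amenability of $\eta$. Without weak containment in $\lambda_\Gamma$ there is no route from left $P$-amenability of $\cK^\eta$ to ``$P$ amenable relative to $\rL^\infty(X)$'', so the exclusion step as you have written it does not fire.

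There is a second, independent gap in family~(1). Once the first alternative is ruled out, Theorem~\ref{thm.source} yields only a uniform lower bound $\|\psi_t(a)\|_2 \geq \delta$ on $\cU(A)$ for some $t>0$. Your proposal treats this as immediately forcing $A \prec_M \rL^\infty(X)$, which is correct for a \emph{proper} cocycle by the Fourier-coefficient computation \eqref{eq.concreteform}. But in family~(1) the cocycle is merely unbounded, and an unbounded non-proper cocycle does not let you draw that conclusion. The paper instead exploits mixingness through the spectral-gap machinery of \cite{Va10b}: the $\psi_t$ bound gives uniform convergence of the malleable deformation $\al_t$ on a corner of $A$; assuming $A \not\prec_M \rL^\infty(X)$ this passes to a corner of $P$; and then---using finite index a further time, through a Pimsner--Popa basis---to a central cut of $M$, at which point unboundedness of $c$ produces a contradiction. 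Your sketch has no substitute for this chain, so case~(1) is missing its main step.
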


\begin{remark}
Theorem \ref{thm.Crigidgroups} covers a rather large family of groups. In \cite[Definition 1]{OP08} a countable group $\Gamma$ is said to have the property $(HH)^+$ if $\Gamma$ has the CMAP and if $\Gamma$ admits a proper $1$-cocycle into a nonamenable representation. Obviously all groups with the property $(HH)^+$ belong to the second family of Theorem \ref{thm.Crigidgroups}. By \cite[Theorem 2.3]{OP08} the class $(HH)^+$ contains all lattices in $\SL(2,\R)$, $\SL(2,\C)$, $\SO(n,1)$ with $n \geq 2$, and $\SU(n,1)$. Furthermore the class $(HH)^+$ contains the free groups $\F_n$, $2 \leq n \leq \infty$, and contains all free products $\Lambda_1 * \Lambda_2$ of amenable groups $\Lambda_1,\Lambda_2$ with $|\Lambda_1| \geq 2$ and $|\Lambda_2| \geq 3$. Also the class $(HH)^+$ is stable under free products and direct products.
\end{remark}

\begin{definition}
We say that a countable group $\Gamma$ is $\cC$-rigid (Cartan-rigid) if for every free ergodic pmp action $\Gamma \actson (X,\mu)$, the II$_1$ factor $\rL^\infty(X) \rtimes \Gamma$ has $\rL^\infty(X)$ as its unique Cartan subalgebra up to unitary conjugacy.

In view of \cite[Proposition 4.12]{OP07} we say that a countable group $\Gamma$ is $\cCs$-rigid\footnote{The notation $\cCs$-rigid can be read as ``strongly Cartan-rigid'', but also as ``stably Cartan-rigid'' because of the stability results in \cite[Proposition 4.12]{OP07}.} if for every free ergodic pmp action $\Gamma \actson (X,\mu)$, the II$_1$ factor $M = \rL^\infty(X) \rtimes \Gamma$ has the following property~: every maximal abelian subalgebra $A \subset M$ whose normalizer $\cN_M(A)\dpr$ is a finite index subfactor of $M$, is unitarily conjugate to $\rL^\infty(X)$.
\end{definition}

As already mentioned above, Theorem \ref{thm.Crigidgroups} has some immediate consequences in the classification of free group
measure space II$_1$
factors. Recall that if $M$ is a II$_1$ factor and $s > 0$, then $M^s$ denotes the Murray-von Neumann \emph{amplification} of $M$ by $s$.

\begin{theorem}\label{thm.class}
\begin{enumerate}
\item If $n \neq m$ and $\F_n \actson (X,\mu)$, $\F_m \actson (Y,\eta)$ are arbitrary free ergodic pmp actions, then
$$\rL^\infty(X) \rtimes \F_n \not\cong \rL^\infty(Y) \rtimes \F_m \; .$$

\item If $(X_0,\mu_0)$ and $(Y_0,\eta_0)$ are nontrivial standard probability spaces and if $2 \leq n,m \leq \infty$, $s,t > 0$, we have
$$\Bigl(\rL^\infty\bigl( X_0^{\F_n}\bigr) \rtimes \F_n \Bigr)^s \cong \Bigl(\rL^\infty\bigl( Y_0^{\F_m}\bigr) \rtimes \F_m \Bigr)^t \quad\text{if and only if}\quad \frac{n-1}{s} = \frac{m-1}{t} \; .$$
In particular for the wreath product groups $\Z \wr \F_n = \Z^{(\F_n)} \rtimes \F_n$ we get that $\rL(\Z \wr \F_n)^s \cong \rL(\Z \wr \F_m)^t$ if and only if $(n-1)/s = (m-1)/t$.

\item If $\cR_1$ is a treeable ergodic pmp equivalence relation and if $\rL \cR_1 \cong \rL \cR_2$ for some other pmp equivalence relation $\cR_2$, then $\cR_1 \cong \cR_2$.
\end{enumerate}
\end{theorem}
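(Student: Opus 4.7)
The plan is to derive all three parts as essentially formal consequences of the $\cC$-rigidity of free groups provided by Theorem \ref{thm.Crigidgroups}. As noted in the remark following that theorem, the free groups $\F_n$, $2 \leq n \leq \infty$, belong to the class $(HH)^+$, hence are $\cCs$-rigid: for every free ergodic pmp action $\F_n \actson (X,\mu)$, $\rL^\infty(X)$ is the unique Cartan subalgebra of $\rL^\infty(X) \rtimes \F_n$ up to unitary conjugacy. For (1), I fix a hypothetical isomorphism $\pi : \rL^\infty(X) \rtimes \F_n \to \rL^\infty(Y) \rtimes \F_m$ and note that $\pi(\rL^\infty(X))$ and $\rL^\infty(Y)$ are both Cartan subalgebras of the target, hence unitarily conjugate by $\cC$-rigidity. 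Composing $\pi$ with the corresponding inner automorphism yields an isomorphism of the pairs $(\rL^\infty(X) \rtimes \F_n, \rL^\infty(X))$ and $(\rL^\infty(Y) \rtimes \F_m, \rL^\infty(Y))$, which by Feldman--Moore \cite{FM75} amounts to an isomorphism of orbit equivalence relations $\cR_{\F_n \actson X} \cong \cR_{\F_m \actson Y}$. Gaboriau's theorem \cite{Ga99,Ga01} computes $\beta_1^{(2)}$ of such an orbit equivalence relation as $n-1$, giving the required $n = m$.

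For (2), the ``only if'' direction is exactly the same argument applied to amplifications. The required ingredients are that $\cC$-rigidity passes to amplifications (cf.\ \cite[Proposition 4.12]{OP07}) and that $\beta_1^{(2)}$ of an ergodic pmp equivalence relation scales as $1/s$ under amplification by $s$, so the matched invariant becomes $(n-1)/s$. The ``if'' direction is precisely Bowen's stable orbit equivalence of Bernoulli actions of free groups \cite{Bo09a,Bo09b}, which supplies stable OE realizing any prescribed value of $(n-1)/s$. The wreath product statement then follows via Fourier duality, which identifies $\rL(\Z \wr \F_n)$ with $\rL^\infty(\T^{\F_n}) \rtimes \F_n$ for the Bernoulli shift, so that the claim reduces to the Bernoulli case with $X_0 = Y_0 = \T$.

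For (3), I would invoke Hjorth's theorem that any treeable ergodic pmp equivalence relation $\cR_1$ is induced by a free ergodic pmp action of some free group $\F_n$, with $1 \leq n \leq \infty$. For $n \geq 2$, $\rL \cR_1 \cong \rL^\infty(X) \rtimes \F_n$ and the argument of (1) applies verbatim: any isomorphism $\rL \cR_1 \cong \rL \cR_2$ carries $\rL^\infty(Y) \subset \rL \cR_2$ to a Cartan of $\rL \cR_1$ that is unitarily conjugate to $\rL^\infty(X)$ by $\cC$-rigidity, hence $\cR_1 \cong \cR_2$ by \cite{FM75}. For $n = 1$, $\rL \cR_1$ is the hyperfinite factor $R$, so $\rL \cR_2 \cong R$ forces $\cR_2$ to be amenable by Connes' theorem, and then $\cR_1 \cong \cR_2$ by uniqueness of the hyperfinite ergodic pmp equivalence relation \cite{CFW81}.

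All the substantive mathematical work lies upstream in Theorem \ref{thm.Crigidgroups}; this corollary is an assembly with the external inputs of Feldman--Moore, Gaboriau, Bowen, Hjorth, Connes, and Connes--Feldman--Weiss. The only points where care is needed are verifying cleanly that $\cC$-rigidity transfers to amplifications in (2) and that Bowen's orbit equivalence result produces stable OE with precisely the compression ratio matching $(n-1)/s = (m-1)/t$; both are essentially standard, so I do not anticipate any genuine obstacle here beyond bookkeeping of the cited black boxes.
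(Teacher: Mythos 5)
Your parts (1) and (2) track the paper's proof essentially exactly: in both cases Theorem \ref{thm.Crigidgroups} converts the isomorphism of II$_1$ factors into an (stable) orbit equivalence, and then Gaboriau's invariance of $\beta_1^{(2)}$ (equivalently, [Ga01, Th\'eor\`emes 3.2 and 6.3]) finishes the comparison; Bowen's stable OE converse and the identification $\rL(\Z\wr\F_n)\cong\rL^\infty([0,1]^{\F_n})\rtimes\F_n$ are used by the paper in the same way. Your citation of $\ell^2$-Betti numbers rather than the specific Gaboriau theorem numbers is a cosmetic difference only.

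Part (3), however, has a genuine gap. You invoke ``Hjorth's theorem that any treeable ergodic pmp equivalence relation $\cR_1$ is induced by a free ergodic pmp action of some free group $\F_n$'' and then proceed as if $\rL\cR_1\cong\rL^\infty(X)\rtimes\F_n$. This is not what Hjorth proves: \cite[Corollary 1.2]{Hj05} applies only to treeable ergodic pmp equivalence relations whose \emph{cost is an integer in $\{2,3,\ldots\}$ or $+\infty$}. Since the cost of any orbit equivalence relation of a free $\F_n$-action equals $n$ (Gaboriau), a treeable relation of non-integer finite cost \emph{cannot} arise from any free group action, so your reduction does not even get started in that case. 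The paper fills this hole by an additional step: after disposing of the cost-$1$ (amenable) case via Connes and Connes--Feldman--Weiss, it chooses $s>0$ so that $n:=(c-1)/s$ is a positive integer or $+\infty$, passes to the amplification $\cR_1^s$, which is still treeable with the integer cost $n+1$ [Ga99, Proposition 2.6], and only then applies Hjorth to write $\rL(\cR_1^s)\cong\rL^\infty(Z)\rtimes\F_{n+1}$, concludes $\cR_1^s\cong\cR_2^s$ from Theorem \ref{thm.Crigidgroups}, and descends to $\cR_1\cong\cR_2$. You need to insert this amplification step.
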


Theorem \ref{thm.Crigidgroups} also has a number of consequences for the \emph{fundamental group} of group measure space II$_1$ factors. Recall that the fundamental group $\cF(M)$ of a II$_1$ factor $M$ is the group of positive real numbers $s > 0$ such that $M^s \cong M$. In \cite{PV08b} we introduced the invariants $\Sfactor(\Gamma)$ and $\Seqrel(\Gamma)$ of a countable group $\Gamma$, as being the set of subgroups of $\R_+$ that can arise as the fundamental group of a group measure space II$_1$ factor $\rL^\infty(X) \rtimes \Gamma$, resp.\ an orbit equivalence relation $\cR(\Gamma \actson X)$, for some free ergodic pmp action of $\Gamma$. In \cite{PV08a} we proved that $\Sfactor(\F_\infty)$ and $\Seqrel(\F_\infty)$ are huge. They for instance contain subgroups of $\R_+$ that can have any Hausdorff dimension between $0$ and $1$. On the other hand from \cite[Th\'{e}or\`{e}me 6.3]{Ga01} we know that $\Seqrel(\F_n) = \{\{1\}\}$ for all $2 \leq n < \infty$. Whenever $\Gamma$ is a $\cC$-rigid group we have $\Sfactor(\Gamma) = \Seqrel(\Gamma)$. So it follows from Theorem \ref{thm.Crigidgroups} that also $\Sfactor(\F_n) = \{\{1\}\}$ for all $2 \leq n < \infty$, confirming our conjecture in \cite{PV08b}.

Throughout this article we call $(M,\tau)$ a tracial von Neumann algebra if $M$ is a von Neumann algebra equipped with a faithful normal tracial state $\tau$.

Following \cite{Oz03} a tracial von Neumann algebra $(M,\tau)$ is called \emph{solid} if the relative commutant $A' \cap M$ of any diffuse von Neumann subalgebra $A \subset M$ is amenable. It is shown in \cite{Oz03} that the group von Neumann algebras $\rL \Gamma$ of
any   hyperbolic groups is solid. Then in \cite{OP07},  $(M,\tau)$ is called \emph{strongly solid} if even
the normalizer of any diffuse amenable subalgebra of $M$ is still amenable, and
it is shown that the free group factors $\rL \F_n$ are strongly solid. It has
been recently
proved in \cite{CS11} that in fact all group von Neumann algebras $\rL \Gamma$ of arbitrary hyperbolic groups are strongly solid.

Crossed products $B \rtimes \Gamma$ are of course typically not strongly solid, but we establish the following relative strong solidity property: for certain groups $\Gamma$ we prove the dichotomy that an amenable subalgebra $A$ of an arbitrary crossed product $B \rtimes \Gamma$ with $B$ amenable either embeds into $B$ (in the sense of intertwining-by-bimodules, see Definition \ref{def.intertwine}), or has an amenable normalizer. More generally one can replace ``amenability'' by ``amenability relative to $B$'' in the sense of Definition \ref{def.rel-amen}, resulting in the following statement.

\begin{theorem}\label{thm.main}
Let $\Gamma$ be a weakly amenable group that admits a proper $1$-cocycle into an orthogonal representation that is weakly contained in the regular representation. Let $\Gamma \overset{\si}{\actson} (B,\tau)$ be any trace preserving action on a tracial von Neumann algebra $(B,\tau)$. Denote $M = B \rtimes \Gamma$ and let $A \subset M$ be a von Neumann subalgebra such that $A$ is amenable relative to $B$.

Either $A \prec_M B$ or the normalizer $P := \cN_M(A)\dpr$ is amenable relative to $B$.
\end{theorem}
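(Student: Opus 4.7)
The plan is to combine a Gaussian-type malleable deformation of $M$ built from the proper $1$-cocycle $c$ with the uniform cb-norm control provided by the weak amenability of $\Gamma$, in the deformation/rigidity spirit of Ozawa--Popa \cite{OP07,OP08} but adapted to relative amenability over $B$. From $\eta : \Gamma \recht \cO(K_\R)$, I would form the Gaussian algebra $(A_\eta,\tau_0)$ with its natural $\Gamma$-action, and consider the tracial dilation $\Mtil := (B \ovt A_\eta) \rtimes \Gamma \supset M$. The cocycle $c$ produces a one-parameter group $(\al_t)_{t \in \R}$ of trace-preserving automorphisms of $\Mtil$, fixing $B \ovt A_\eta$ pointwise and twisting the canonical unitaries as $\al_t(u_g) = \om(t c(g))\,u_g$, where $\om(\xi)\in \cU(A_\eta)$ is the Weyl unitary attached to $\xi \in K_\R$. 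Two features are essential: Popa's transversality $\|\al_{2t}(x) - x\|_2 \leq 2\|\al_t(x) - E_M\al_t(x)\|_2$ for all $x \in M$, and, since $\eta$ is weakly contained in the regular representation of $\Gamma$, the weak containment of the $M$-$M$-bimodule $\rL^2(\Mtil) \ominus \rL^2(M)$ inside the coarse-over-$B$ bimodule $\rL^2(M) \ot_B \rL^2(M)$.

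Next I would exploit weak amenability. Let $(f_n)$ be finitely supported Herz--Schur multipliers on $\Gamma$ with $f_n \recht 1$ pointwise and $\limsup_n \|f_n\|\cb < \infty$, and denote by $\Phi_n : M \recht M$ the induced $\Gamma$-equivariant normal cb maps; they converge pointwise-$\|\cdot\|_2$ to the identity with uniformly bounded cb-norm. The heart of the argument is the following \emph{uniformity principle}: if $\al_t \recht \id$ pointwise-$\|\cdot\|_2$ on $A$, then $\al_t \recht \id$ uniformly on the unit ball of $A$. I would prove it by contradiction: supposing there exist $\eps_0 > 0$, $t_n \recht 0$ and $a_n \in (A)_1$ with $\|\al_{t_n}(a_n) - a_n\|_2 \geq \eps_0$, the vectors $\al_{t_n}(a_n) - a_n$ produce a bounded, non-vanishing limit in a tracial ultraproduct $M$-$M$-bimodule. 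Applying the maps $\Phi_n$ and using the weak containment of $\rL^2(\Mtil) \ominus \rL^2(M)$ in the coarse-over-$B$ bimodule, the relative amenability of $A$ over $B$ together with the uniform cb-bound force this limit vector to be $A$-central, which is incompatible with the fact that $\al_t$ fixes $B$ and with the non-vanishing of the vector.

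With the uniformity principle in hand the dichotomy unfolds. If $\al_t \recht \id$ uniformly on $(A)_1$, then for $t$ small enough the explicit Weyl form of $\al_t$ allows for a standard Popa intertwining argument on the deformed unitaries, producing nonzero $A$-$B$-subbimodules of $\rL^2(M)$ of finite right $B$-dimension, i.e.\ $A \prec_M B$. Otherwise there exist $\eps_0 > 0$, $t_n \recht 0$ and $a_n \in (A)_1$ with $\|\al_{t_n}(a_n) - a_n\|_2 \geq \eps_0$; by transversality the vectors $\xi_n := \al_{t_n/2}(a_n) - E_M\al_{t_n/2}(a_n) \in \rL^2(\Mtil) \ominus \rL^2(M)$ have norms bounded below. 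Applying the uniformity principle to the elements $u a_n u^* \in A$ for $u \in \cN_M(A)$ gives $\|u\xi_n - \xi_n u\|_2 \recht 0$, so $(\xi_n)$ becomes an asymptotically $\cN_M(A)$-central bounded sequence in a bimodule weakly contained in $\rL^2(M) \ot_B \rL^2(M)$. By the characterization of relative amenability via almost central vectors in the coarse-over-$B$ bimodule, this is exactly what it takes to conclude that $P := \cN_M(A)\dpr$ is amenable relative to $B$.

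The main obstacle is the uniformity principle of the second paragraph: forcing the hypothesis that $A$ is amenable relative to $B$ and the uniform cb-bound coming from weak amenability to interact in such a way as to upgrade pointwise $\|\cdot\|_2$-convergence of $\al_t$ on $A$ into uniform convergence on the unit ball of $A$. This is the genuinely new technical input; the construction of the deformation, the transversality inequality, and the two-sided conclusion via intertwining and almost central vectors are by now standard pieces of the deformation/rigidity machinery, but none of them alone suffices to handle an $A$ that is merely amenable relative to $B$ rather than amenable itself.
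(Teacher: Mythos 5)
Your overall blueprint (Gaussian dilation $\Mtil = (B \ovt A_\eta) \rtimes \Gamma$, the Sinclair deformation $\al_t$ twisting $u_g$ by Weyl unitaries, transversality, weak containment of $\rL^2(\Mtil) \ominus \rL^2(M)$ in $\rL^2(M) \ot_B \rL^2(M)$, and Herz--Schur multipliers with uniform cb-bound) gathers exactly the right raw materials, and is superficially close to the structure of the paper. But the step you yourself identify as "the genuinely new technical input" --- the uniformity principle, i.e.\ that relative amenability of $A$ over $B$ together with weak amenability upgrades pointwise $\|\cdot\|_2$-convergence of $\al_t$ on $A$ to uniform convergence on $(A)_1$ --- is false, and this is a fatal gap. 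Pointwise convergence of $\al_t$ on $A$ is automatic (for every fixed $x \in M$, $\al_t(x) \recht x$ as $t \recht 0$), so your principle would force uniform convergence on every unit ball of every relatively-amenable-over-$B$ subalgebra; by your next paragraph that would give $A \prec_M B$ unconditionally. For a concrete counterexample, take $\Gamma = \F_2 = \langle a,b\rangle$ acting trivially on $B = \C$, so $M = \rL(\F_2)$, and $A = \rL(\Z) = \{u_{a^n}\}\dpr$. Then $A$ is amenable, hence amenable relative to $\C$, and $\F_2$ has CMAP; yet for every fixed $t > 0$ one has $\|\al_t(u_{a^n}) - u_{a^n}\|_2^2 = 2\bigl(1 - e^{-t^2\|c(a^n)\|^2/2}\bigr) \recht 2$ as $n \recht \infty$, because $c$ is unbounded on $\langle a \rangle$. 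So $\al_t$ does not converge uniformly on $(A)_1$ for any $t > 0$. Here $A \not\prec_M B$ and $\cN_M(A)\dpr = \rL(\Z)$ is amenable, consistent with the theorem, but inconsistent with your principle.

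The actual engine of the paper is different and avoids any such pointwise-to-uniform upgrade. First, there is a reduction to the case of the \emph{trivial} action $\Gamma \actson B$ via the comultiplication $\Delta : M \recht M \ovt \rL(\Gamma), \; b u_g \mapsto b u_g \ot u_g$ (Lemma \ref{lem.reduction}), so that one may write $M = B \ovt \rL(\Gamma)$; you do not address this reduction, and without it the Herz--Schur maps do not interact well with a general cocycle action. Second, and this is flagged in the paper as its main novelty, weak amenability is used \emph{not} to control convergence of $\al_t$ on $A$, but to construct (Theorem \ref{thm.omn}) a net of normal states $\om_i$ on a different von Neumann algebra $\cN = N \ovt \rL(\Gamma)$, where $N$ is generated by $B$ and $P\op$ on $\rL^2(M) \ot_A \rL^2(P)$, with prescribed almost-invariance under $\pi(u)\theta(\ubar)$, $u \in \cN_M(A)$, and almost-fixing by $\pi(a)\theta(\abar)$, $a \in \cU(A)$. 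The canonical implementing vectors $\xi_n$ of these states are what the deformation is applied to (in the standard representation of $\cNtil = N \ovt (D \rtimes \Gamma)$), and the dichotomy (Theorem \ref{thm.source}) is not "uniform convergence of $\al_t$ on $(A)_1$" versus "failure", but rather "$q\cK^\eta$ is a left $P$-amenable bimodule" versus "there exist $t,\delta > 0$ with $\|\psi_t(a)\|_2 \geq \delta$ for all $a \in \cU(A)$", proved by a case-by-case analysis of $\|e^\perp V_t \pi(p)\xi_n\|$ (Lemmas \ref{lem.case1} and \ref{lem.case2}). Your intertwining step from a uniform lower bound on $\|\psi_t(\cdot)\|_2$ to $A \prec_M B$ via properness of $c$ is correct in spirit and matches Section 8 of the paper, and your passage from left $P$-amenability of a bimodule weakly contained in $\rL^2(M)\ot_B\rL^2(M)$ to relative amenability of $P$ over $B$ is also correct. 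What is missing is the relative weak compactness construction through $N \ovt \rL(\Gamma)$, which replaces the false uniformity principle and is the genuine innovation.
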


Note that Theorem \ref{thm.main} immediately implies that for all II$_1$ factors $B$ and all $2 \leq n \leq +\infty$, the tensor product $B \ovt \rL \F_n$ has no Cartan subalgebra, thus improving \cite[Corollary 2]{OP07} which required $B$ to have the complete metric approximation property.

If $\Gamma = \Gamma_1 \times \cdots \times \Gamma_n$ is a direct product of $n \geq 2$ nonamenable groups, Theorem \ref{thm.main} does not hold since, for instance, the relative commutant of a subalgebra of $\rL(\Gamma_1)$ contains $\rL(\Gamma_2)$. Nevertheless we obtain the following precise description of what exactly can happen. The notion of strong intertwining $A \prec^f_M Q$ is explained in Definition \ref{def.intertwine}.

\begin{theorem}\label{thm.main-products}
Let $\Gamma = \Gamma_1 \times \cdots \times \Gamma_n$ be a direct product of weakly amenable groups such that every $\Gamma_i$ admits a proper $1$-cocycle into an orthogonal representation that is weakly contained in the regular representation of $\Gamma_i$. Let $\Gamma \overset{\si}{\actson} (B,\tau)$ be any trace preserving action on a tracial von Neumann algebra $(B,\tau)$. Denote $M = B \rtimes \Gamma$ and let $A \subset M$ be a von Neumann subalgebra that is amenable relative to $B$. Denote by $P := \cN_M(A)\dpr$ the normalizer of $A$ inside $M$.

Then there exist projections $p_0,\ldots,p_n \in \cZ(P)$, some of which might be zero, such that $p_0 \vee \cdots \vee p_n = 1$ and
\begin{itemize}
\item $P p_0$ is amenable relative to $B$,
\item for every $i=1,\ldots,n$ we have $A p_i \prec^f_M B \rtimes \Gammah_i$ where $\Gammah_i$ is the product of all $\Gamma_j$, $j \neq i$.
\end{itemize}
\end{theorem}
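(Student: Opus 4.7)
The plan is to apply Theorem~\ref{thm.main} once in each of the $n$ ``coordinate directions'' of the product $\Gamma = \Gamma_1 \times \cdots \times \Gamma_n$ and then to combine the resulting corner-wise dichotomies through a joint relative amenability argument.

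Fix $i \in \{1,\ldots,n\}$. Since $\Gamma_i$ commutes with $\widehat{\Gamma}_i$ inside $\Gamma$, we may view $M = (B \rtimes \widehat{\Gamma}_i) \rtimes \Gamma_i$ with $\Gamma_i$ acting on $B \rtimes \widehat{\Gamma}_i$ in a trace preserving way, and by hypothesis $\Gamma_i$ satisfies the assumptions of Theorem~\ref{thm.main}. Since $A$ is amenable relative to $B$ and $B \subset B \rtimes \widehat{\Gamma}_i$, the standard hereditary properties of relative amenability imply that $A$ is amenable relative to $B \rtimes \widehat{\Gamma}_i$ as well. Theorem~\ref{thm.main} thus yields the dichotomy: either $A \prec_M B \rtimes \widehat{\Gamma}_i$, or $P$ is amenable relative to $B \rtimes \widehat{\Gamma}_i$.

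Upgrading this to a corner-wise statement is routine: by a standard maximality argument I take $p_i \in \cZ(P)$ to be the largest projection satisfying $A p_i \prec^f_M B \rtimes \widehat{\Gamma}_i$; such a maximum exists in $\cZ(P)$ because $P$ normalizes $A$ and strong intertwining is stable under suprema of normalizer-compatible projections. On the complement, no nonzero corner of $A(1 - p_i)$ embeds into $B \rtimes \widehat{\Gamma}_i$, so applying Theorem~\ref{thm.main} within every nonzero corner of $(1-p_i) M (1-p_i)$ forces $P(1 - p_i)$ to be amenable relative to $B \rtimes \widehat{\Gamma}_i$.

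Set $p_0 := \bigwedge_{i=1}^n (1-p_i) \in \cZ(P)$. Then $P p_0$ is amenable relative to $B \rtimes \widehat{\Gamma}_i$ for \emph{every} $i$, and $p_0 \vee p_1 \vee \cdots \vee p_n = 1$ because $1 - p_0 = p_1 \vee \cdots \vee p_n$. The main obstacle is then to show that simultaneous relative amenability over all the intermediate subalgebras $B \rtimes \widehat{\Gamma}_i$ implies amenability relative to their common intersection $\bigcap_i (B \rtimes \widehat{\Gamma}_i) = B$. This ``joint relative amenability'' statement is not formal and relies critically on the product structure of $\Gamma$; I would isolate it as a preliminary lemma and prove it by analyzing the basic-construction bimodules $\rL^2 \langle M, e_{B \rtimes \widehat{\Gamma}_i}\rangle$ via the coset identification $\Gamma / \widehat{\Gamma}_i \cong \Gamma_i$ together with the factorization $\ell^2(\Gamma_1) \otimes \cdots \otimes \ell^2(\Gamma_n) \cong \ell^2(\Gamma)$, splicing the Hilbert bimodules that witness amenability relative to each $B \rtimes \widehat{\Gamma}_i$ into a single Hilbert bimodule witnessing amenability relative to $B$.
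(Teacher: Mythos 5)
Your proposal follows the paper's proof essentially step for step: the corner decomposition via maximal strong-intertwining projections $p_i$ is what the paper extracts from \cite[Proposition 2.5]{Va10b}, the coordinate-wise application of Theorem~\ref{thm.main} with $M = (B \rtimes \Gammah_i) \rtimes \Gamma_i$ is identical, and the ``joint relative amenability'' statement you correctly single out as the crux is precisely the paper's Proposition~\ref{prop.intersection}, proved earlier by exactly the bimodule-splicing mechanism you sketch (stated abstractly in terms of commuting squares and regularity of $Q_1$, and applied iteratively two subalgebras at a time rather than all $n$ at once). The approach is the same; the only thing you leave unfinished is the actual proof of that intersection lemma, which was your declared intention to isolate.
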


Note that each $\Gamma$ covered by Theorem \ref{thm.main-products} with the factors $\Gamma_i$ being nonamenable, also belongs to the second family of Theorem \ref{thm.Crigidgroups} and hence is $\cC$-rigid and $\cCs$-rigid.

We obtain the following similar result for crossed products $B \rtimes \Gamma$ by arbitrary actions of weakly amenable free products $\Gamma = \Lambda_1 * \Lambda_2$. Note that these groups belong to the first family in Theorem \ref{thm.Crigidgroups} and hence also are $\cC$-rigid and $\cCs$-rigid.

\begin{theorem}\label{thm.freeproducts}
Let $\Gamma = \Lambda_1 * \Lambda_2$ be any weakly amenable free product group (e.g.\ the free product of two groups with the CMAP). Let $\Gamma \overset{\si}{\actson} (B,\tau)$ be any trace preserving action on a tracial von Neumann algebra $(B,\tau)$. Denote $M = B \rtimes \Gamma$ and let $A \subset M$ be a von Neumann subalgebra that is amenable relative to $B$. Denote by $P := \cN_M(A)\dpr$ the normalizer of $A$ inside $M$.

Then there exist projections $q,p_0,p_1,p_2 \in \cZ(P)$, some of which might be zero, such that $q \vee p_0 \vee p_1 \vee p_2 = 1$ and
\begin{itemize}
\item $A q \prec^f_M B$,
\item $P p_0$ is amenable relative to $B$,
\item $P p_i \prec^f_M B \rtimes \Lambda_i$ for all $i = 1,2$.
\end{itemize}
\end{theorem}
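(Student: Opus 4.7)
The strategy is to combine the $s$-malleable deformation of $M$ built from the proper $1$-cocycle $c(g) = \chi_{[o,g\cdot o]}$ of $\Gamma = \Lambda_1 * \Lambda_2$ acting on its Bass-Serre tree $T$ --- whose edge stabilizers are trivial, so that $\ell^2(E(T))$ carries the regular representation of $\Gamma$ --- with the normal completely bounded maps $\Phi_n : M \recht M$ extending the finitely supported Herz-Schur multipliers provided by weak amenability, with $\Phi_n \recht \id$ pointwise and $\limsup\|\Phi_n\|\cb < \infty$. The geometry of $T$, and in particular the splitting of its vertex set as $\Gamma/\Lambda_1 \sqcup \Gamma/\Lambda_2$, is what will allow the finer conclusion of Theorem \ref{thm.freeproducts} beyond the simple dichotomy of Theorem \ref{thm.main}.

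First I would define $q \in \cZ(P)$ as the maximal central projection with $Aq \prec^f_M B$, define $p_i \in \cZ(P)$ ($i = 1,2$) as the maximal central projection with $Ap_i \prec^f_M B \rtimes \Lambda_i$, and $p_0 \in \cZ(P)$ as the maximal central projection such that $Pp_0$ is amenable relative to $B$; each exists by a standard intertwining-by-bimodules maximality argument. Setting $r := 1 - (q \vee p_0 \vee p_1 \vee p_2)$ the task reduces to showing $r = 0$. Assuming $r \neq 0$, I would run the spectral-gap argument that underlies Theorem \ref{thm.main} in the corner $rMr$: the failure of $Ar \prec_M B$, combined with the deformation $\theta_t$ and the multipliers $\Phi_n$, produces a nonzero element $v$ in the relative commutant $(Ar)' \cap \Mtil^\om$ lying orthogonal to $M^\om$, i.e.\ a ``non-trivial fluctuation'' of $A$ along the tree. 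Decomposing $v$ spectrally according to the tree length function, three alternatives arise: the mass of $v$ concentrates on the $\Gamma/\Lambda_1$-orbit of the base vertex $o$, on the $\Gamma/\Lambda_2$-orbit of a vertex adjacent to $o$, or escapes to the ends of $T$. In the first two cases Popa's intertwining criterion should yield a nonzero central subprojection of $r$ on which $A$ intertwines into $B \rtimes \Lambda_i$, contradicting the maximality of $p_i$; in the third case the standard $(\Phi_n)$-spectral-gap mechanism should yield amenability of $Pr$ relative to $B$, contradicting the maximality of $p_0$.

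The main obstacle is making the vertex/infinity trichotomy rigorous at the ultrapower level: the three alternatives must genuinely exhaust the possibilities for the spectral support of $v$, and local concentration of mass on a $\Gamma/\Lambda_i$-vertex orbit must be convertible into a bona fide intertwining $Ar \prec_M B \rtimes \Lambda_i$ on some nonzero central subprojection. This requires a careful $L^2$-decomposition of $\Mtil$ based on the first-letter/last-letter normal forms in the free product $\Lambda_1 * \Lambda_2$, together with the compatibility of the Herz-Schur multipliers $\Phi_n$ with this decomposition --- in spirit similar to the Bass-Serre tree arguments of \cite{PV09}, but carried out under the weaker hypothesis of amenability of $A$ relative to $B$ exploited in Theorem \ref{thm.main} rather than the property (T) type assumptions used in \cite{PV09}.
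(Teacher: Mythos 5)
You correctly identify the setup (maximal central projections, free‐product $1$--cocycle of proper length in a multiple of the regular representation, weak amenability), and your Bass--Serre tree cocycle $c(g)=\chi_{[o,g\cdot o]}$ is essentially the same as the paper's explicit cocycle $c(g)=(\delta_g-\delta_e,0)$ for $g\in\Lambda_1$, $c(h)=(0,\delta_h-\delta_e)$ for $h\in\Lambda_2$ (edge stabilizers of the tree are trivial, so $\ell^2(E(T))\cong\ell^2(\Gamma)$, and $\|c(g)\|^2=2|g|$). However, the central mechanism you propose --- extracting a single nonzero "fluctuation'' $v\in (Ar)'\cap\Mtil^\om$ orthogonal to $M^\om$ from the \emph{failure of intertwining} $Ar\not\prec_M B$, and then running a spectral trichotomy on $v$ --- conflates two genuinely disjoint arguments and would not go through as stated.

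The obstruction is this. The three conclusions in the theorem are not reached by classifying the spectral support of a single ultrapower element. Instead, the actual proof first applies the already-established Theorem~\ref{thm.source} to $Ap_0\subset p_0Mp_0$, which yields a \emph{dichotomy}: either the weak-compactness vectors are significantly moved by $\al_t$, in which case $p_0\cK^\eta$ is a left $Pp_0$--amenable bimodule and, $\eta$ being a multiple of $\lambda_\Gamma$, this directly gives $Pp_0$ amenable relative to $B$; or $\psi_t$ is uniformly bounded below on $\cU(Ap_0)$, and \emph{then} --- and only then --- the free-product intertwining theorem of \cite{IPP05} (in the form of \cite[Theorem~5.4]{PV09}) applies to give $Ap_0\prec_M B$ or $Pp_0\prec_M B\rtimes\Lambda_i$, both contradictions. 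Your proposed "escape to the ends $\Rightarrow$ amenability of $Pr$'' branch is wrong on its own terms: in an IPP-type trichotomy for free products under uniform convergence of the deformation, mass escaping along the tree gives $A\prec_M B$, not amenability of the normalizer. Conversely, when amenability of $P$ truly holds, the deformation does \emph{not} converge uniformly on $A$, so one is not in the regime where a small $\al_t$-perturbation of $A$ sits near $M$ --- the relevant objects are states on $\B(\cHtil\ominus\cH)$ built from the weak-compactness vectors (Lemma~\ref{lem.case1}), not fluctuations in $\Mtil^\om$. In short, your single $v$ would have to simultaneously witness the hypotheses of two incompatible regimes. The fix is to decouple, exactly as the paper does: cite Theorem~\ref{thm.source} for the amenability/uniform-convergence dichotomy, and in the uniform-convergence branch fall back on IPP05 to obtain the intertwining contradiction. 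No new ultrapower trichotomy is needed.
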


All the results above will follow from a key technical theorem that we state as Theorem \ref{thm.source} in Section \ref{sec.source}.

As a consequence of the above uniqueness theorems for Cartan subalgebras, we obtain several W$^*$-superrigidity results. Recall that a free ergodic pmp action $\Gamma \actson (X,\mu)$ is called W$^*$-superrigid if the group measure space II$_1$ factor $\rL^\infty(X) \rtimes \Gamma$ ``remembers'' the group action $\Gamma \actson (X,\mu)$~: any other group measure space construction yielding an isomorphic II$_1$ factor must come from an isomorphic group and a conjugate action (see Section \ref{sec.superrigid} for precise definitions). In \cite{Pe09} the existence of virtually W$^*$-superrigid group actions was proven.
In \cite{PV09} we obtained the first concrete W$^*$-superrigidity theorem,
for Bernoulli actions of a large class of amalgamated free product groups. In \cite{Io10} it was shown   that  Bernoulli actions of icc property (T) groups are W$^*$-superrigid. In the present paper, a combination of our unique Cartan decomposition
theorem \ref{thm.Crigidgroups} and the OE superrigidity theorems in \cite{Po05,Po06b} will allow us to deduce  the following result (see also Theorem \ref{thm.superrigid-quotient-bernoulli} and Remark \ref{rem.superrigid} thereafter).

\begin{theorem}
Let $\Lambda, \Gamma_1, \Gamma_2$ be weakly amenable icc groups that admit a proper $1$-cocycle into a nonamenable representation.
\begin{itemize}
\item Put $\Gamma = \Gamma_1 \times \Gamma_2$. All free actions of $\Gamma$ that arise as a quotient of the Bernoulli action $\Gamma \actson [0,1]^{\Gamma}$ are W$^*$-superrigid.
\item Consider $\Lambda \times \Lambda \actson \Lambda$ by left-right multiplication. All free actions of $\Lambda \times \Lambda$ that arise as a quotient of the generalized Bernoulli action $\Lambda \times \Lambda \actson [0,1]^\Lambda$ are W$^*$-superrigid.
\end{itemize}
\end{theorem}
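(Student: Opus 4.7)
The plan is to prove this W$^*$-super\-rigidity result by the standard two-step strategy that Popa has pioneered: first show that the crossed product II$_1$ factor $M = \rL^\infty(Y) \rtimes \Gamma$ has $\rL^\infty(Y)$ as its unique Cartan subalgebra up to unitary conjugacy, and then invoke an OE-superrigidity theorem to upgrade an orbit equivalence coming from a second group measure space decomposition into a genuine conjugacy of actions. Throughout, $\Gamma$ denotes either $\Gamma_1 \times \Gamma_2$ or $\Lambda \times \Lambda$, and $Y$ is a given quotient of the corresponding (generalized) Bernoulli action.

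For the first step I would check that $\Gamma$ falls into family (2) of Theorem \ref{thm.Crigidgroups}. Weak amenability is stable under direct products (multiply Herz-Schur multipliers coordinate-wise, with a uniform CB-bound). If $c_i : \Gamma_i \recht K_i$ is a proper $1$-cocycle into a nonamenable orthogonal representation $\eta_i$ (with $\Gamma_2 = \Gamma_1 = \Lambda$ in the second case), then
$$c(g_1,g_2) := c_1(g_1) \oplus c_2(g_2) \in K_1 \oplus K_2$$
is a proper $1$-cocycle into $\eta_1 \oplus \eta_2$, and this direct sum is nonamenable because its restriction to $\Gamma_1 \times \{e\}$ contains $\eta_1$. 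Thus Theorem \ref{thm.Crigidgroups}, and in fact its $\cCs$-rigidity strengthening, applies to $\Gamma$, so that any maximal abelian subalgebra of $M$ with finite index normalizer is unitarily conjugate to $\rL^\infty(Y)$.

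For the second step I would apply Popa's cocycle superrigidity theorems \cite{Po05,Po06b}. The classical Bernoulli action $\Gamma \actson [0,1]^\Gamma$ of a direct product of two infinite groups (with at least one factor non-amenable) is $\cU_{\text{fin}}$-cocycle superrigid by \cite{Po06b}, and the same holds for the generalized Bernoulli action $\Lambda \times \Lambda \actson [0,1]^\Lambda$ because the stabilizer $\{(g,g) : g \in \Lambda\}$ of a point has an infinite commutant in $\Lambda \times \Lambda$. These theorems pass to quotients: a free quotient action inherits $\cU_{\text{fin}}$-cocycle superrigidity, hence in particular OE-superrigidity. This is where the hypothesis that $\Lambda, \Gamma_i$ are icc is used to ensure that the quotient remains genuinely free and to identify the isomorphism of groups on the nose rather than up to a finite kernel.

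To combine the two steps I would argue as follows: if $M \cong \rL^\infty(Z) \rtimes \Lambda'$ for some other free ergodic pmp action $\Lambda' \actson (Z,\eta)$, then step one produces a unitary $u \in M$ with $u \rL^\infty(Y) u^* = \rL^\infty(Z)$, hence an isomorphism of Cartan pairs. By Singer's theorem this is exactly an orbit equivalence between $\Gamma \actson Y$ and $\Lambda' \actson Z$; the OE-superrigidity of step two then forces the two actions to be conjugate via an isomorphism of groups, which is the W$^*$-superrigidity statement (I would make this precise in Section \ref{sec.superrigid}). The main technical nuisance, rather than a serious obstacle, is bookkeeping the various equivalences (amplifications, stable orbit equivalence, the $1$-cohomology of the OE cocycle, and the precise interplay between the acting group and the stabilizer in the generalized Bernoulli case); the $\cCs$-rigidity supplied by Theorem \ref{thm.Crigidgroups} is crucial here because it allows one to deal with amplifications and stable W$^*$-equivalence on the same footing as genuine isomorphisms.
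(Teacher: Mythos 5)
Your global strategy is exactly the one the paper follows: verify that $\Gamma$ falls into family (2) of Theorem \ref{thm.Crigidgroups} so that unique Cartan decomposition reduces W$^*$-superrigidity to orbit equivalence superrigidity, and then obtain the latter from Popa's spectral gap cocycle superrigidity theorem applied to the ambient (generalized) Bernoulli/Gaussian action. However, two of your intermediate steps contain errors that need correcting.

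First, your argument that the direct-sum representation is nonamenable is wrong as stated. If $\eta_i : \Gamma_i \recht \cO(K_i)$ and the $\Gamma$-representation on $K_1 \oplus K_2$ is $(g_1,g_2) \mapsto \eta_1(g_1) \oplus \eta_2(g_2)$, then its restriction to $\Gamma_1 \times \{e\}$ is $\eta_1 \oplus \id_{K_2}$, which is an \emph{amenable} $\Gamma_1$-representation because the invariant subspace $K_2$ carries the trivial action, supplying an invariant state on $\B(K_1\oplus K_2)$. So ``restriction to $\Gamma_1$ contains $\eta_1$'' does not yield nonamenability. The correct argument uses that $K_1$ and $K_2$ are $\Gamma$-invariant: any $(\Ad)$-invariant state $\Om$ on $\B(K_1 \oplus K_2)$ has $\Om(p_i)>0$ for some $i$, and restricting/normalizing produces an invariant state on $\B(K_i)$ for the $\Gamma$-action there, hence for the $\Gamma_i$-action via $\eta_i$, contradicting nonamenability of $\eta_i$. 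Alternatively, the paper avoids this technical point entirely by reducing to Theorem \ref{thm.superrigid-gaussian}, whose hypotheses are stated directly on the Koopman representation of the Gaussian action; there the stable spectral gap of $\Gamma_1$ comes from nonamenability of $\pi_{|\Gamma_1}$ via a tensor-power argument rather than from building a cocycle for the product.

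Second, your step ``these theorems pass to quotients: a free quotient action inherits $\cU_{\text{fin}}$-cocycle superrigidity'' is not what is actually true or used; cocycle superrigidity does not pass to quotients in any obvious way. The correct route, which is the one the paper takes, is that $\Gamma \actson Y_\pi$ (the ambient (generalized) Bernoulli/Gaussian action) is $\cU_{\text{fin}}$-cocycle superrigid by Popa's spectral gap theorem, and then \cite[Theorem 5.6]{Po05} converts this into OE-superrigidity of the \emph{quotient} $\Gamma \actson X$; it is in this last step, not in ``keeping the quotient free,'' that the icc and weak-mixing hypotheses are used. Related to this, the criterion you invoke for the left-right generalized Bernoulli action (``the stabilizer $\{(g,g)\}$ has infinite commutant'') is not the hypothesis of \cite[Theorem 1.1]{Po06b}; what must be checked is that $\Gamma_1 \actson Y_\pi$ has stable spectral gap (equivalently $\Gamma_1 \cap \Stab i_0 < \Gamma_1$ is not co-amenable, here $\Lambda$ nonamenable) and that $\Gamma_2 \actson Y_\pi$ is weakly mixing (equivalently $\Gamma_2 \cap \Stab i_0 < \Gamma_2$ has infinite index, here $\Lambda$ infinite).
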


We finally deduce a strong rigidity theorem for crossed products by outer actions. Recall that an action $(\al_g)_{g \in \Gamma}$ by automorphisms of a factor $R$ is called \emph{outer} if no $\al_g$, $g \in e$, is an inner automorphism $\Ad u$, $u \in \cU(R)$. Two outer actions $\al : \Gamma \actson P$ and $\beta : \Lambda \actson Q$ are called \emph{cocycle conjugate} if there exists an isomorphism $\pi : P \recht Q$, an isomorphism $\delta : \Gamma \recht \Lambda$ and a map $w : \Gamma \recht \cU(P)$ such that
$$\pi(w_g \al_g(x) w_g^*) = \beta_{\delta(g)}(\pi(x)) \quad\text{and}\quad w_{gh} = w_g \, \al_g(w_h) \quad\text{for all}\;\; g,h \in \Gamma, x \in P \; .$$

\begin{theorem}\label{thm.rigid-outer}
If $\Gamma,\Lambda$ are icc groups in one of the families of Theorem \ref{thm.Crigidgroups} and if $\Gamma \actson R$ and $\Lambda \actson R$ are outer actions on the hyperfinite II$_1$ factor $R$ such that $R \rtimes \Gamma \cong R \rtimes \Lambda$, then $\Gamma \cong \Lambda$ and the actions $\Gamma \actson R$, $\Lambda \actson R$ are cocycle conjugate.
\end{theorem}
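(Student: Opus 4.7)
\emph{Proof plan.} The strategy is to apply the unique Cartan decomposition of Theorem~\ref{thm.Crigidgroups} inside $M := R \rtimes_\al \Gamma \cong R \rtimes_\be \Lambda$, recovering the ambient hyperfinite subfactor $R$ up to unitary conjugacy and then translating this identification into a cocycle conjugacy of $\al$ and $\be$.

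First I would construct a Cartan subalgebra of $M$ starting from a Cartan $A_0 \subset R$ of the hyperfinite II$_1$ factor. Since all Cartan subalgebras of $R$ are unitarily conjugate (Connes--Feldman--Weiss), for every $g \in \Gamma$ there exists a unitary $w_g \in \cU(R)$ with $w_g\, \al_g(A_0)\, w_g^* = A_0$. The twisted implementers $u_g' := w_g u_g$ then normalize $A_0$ in $M$, and together with $\cN_R(A_0)$ they generate $M$; outerness of $\al$ combined with iccness of $\Gamma$ ensures that $A_0$ remains maximal abelian in $M$, so $A_0$ is a Cartan subalgebra of $M$. The family $\{u_g'\}$ defines a projective action of $\Gamma$ on $(X_0,\mu_0)$, where $A_0 = \rL^\infty(X_0)$, and I would untwist the associated $2$-cocycle by a judicious modification of the $w_g$'s (exploiting iccness of $\Gamma$) in order to obtain a genuine free ergodic pmp action $\Gamma \actson (Y,\eta)$ together with an identification $M \cong \rL^\infty(Y) \rtimes \Gamma$.

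Once $M$ is presented as a group measure space factor by $\Gamma$, Theorem~\ref{thm.Crigidgroups} shows that $\rL^\infty(Y)$ is the unique Cartan subalgebra of $M$ up to unitary conjugacy. Running the same construction on the $\Lambda$-side produces another group measure space realization $\rL^\infty(Y') \rtimes \Lambda$ with the same uniqueness property. After a suitable unitary conjugation in $M$ the two Cartans coincide, and the ambient hyperfinite II$_1$ factor on each side can be reconstructed from the common Cartan as the von Neumann algebra of the maximal amenable ergodic subrelation of the full Cartan equivalence relation; this identifies the two copies of $R$ inside $M$. With a single ambient $R$, both $\Gamma$ and $\Lambda$ are recovered as the image of $\cN_M(R)$ in $\Out(R)$, yielding $\Gamma \cong \Lambda$, and the comparison of the two sections $\Gamma,\Lambda \to \cN_M(R)$ is precisely the data of a cocycle conjugacy between $\al$ and $\be$.

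The main obstacle I expect is the very first reduction: converting the crossed product $R \rtimes_\al \Gamma$ into a genuine group measure space factor $\rL^\infty(Y) \rtimes \Gamma$ by untwisting the $2$-cocycle on the projective action of $\Gamma$ on $A_0$. A secondary difficulty is the intrinsic reconstruction of $R$ from the common Cartan, which requires showing that the copy of $R$ inside $M$ is characterized (e.g.\ as the von Neumann algebra generated by the maximal amenable subrelation of the Cartan relation, which coincides with the unique hyperfinite ergodic equivalence relation on $(X_0,\mu_0)$). Once these two points are settled, the remaining identifications follow purely from the unique Cartan decomposition of Theorem~\ref{thm.Crigidgroups}.
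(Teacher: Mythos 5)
Your proposal takes a fundamentally different and much more elaborate route than the paper's proof, and it has two genuine gaps that make it unworkable as stated.

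The first gap is the one you yourself flag as the "main obstacle": you claim that after conjugating so that $w_g\,\al_g(A_0)\,w_g^* = A_0$, the abelian algebra $A_0$ remains maximal abelian in $M = R \rtimes_\al \Gamma$ because $\al$ is outer and $\Gamma$ is icc. This is not true in general. An element $x = \sum_g x_g u_g \in A_0' \cap M$ satisfies $x_g\,\al_g(a) = a\,x_g$ for all $a \in A_0$; outerness of $\al_g$ as an automorphism of $R$ and iccness of $\Gamma$ do not force $x_g = 0$ for $g \neq e$, because nothing constrains how $\al_g$ interacts with the chosen Cartan $A_0$. In fact, the Connes--Jones example \cite{CJ81} of two non-conjugate Cartan subalgebras is built from precisely this kind of interaction. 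So the construction of a Cartan of $M$ out of $A_0$, the untwisting of the $2$-cocycle, and the presentation of $M$ as a group measure space factor by $\Gamma$ all require hypotheses on $\al$ that are not part of the theorem.

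The second gap is the reconstruction of $R$ from a common Cartan: you propose to recover $R$ intrinsically as the algebra of ``the maximal amenable ergodic subrelation'' of the Cartan equivalence relation. There is no such canonical maximal amenable subrelation, and even if there were, there is no reason the copy of $R$ inside $M$ would correspond to it. This step cannot be made precise.

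The resolution is to bypass Cartan subalgebras entirely and work directly with the regular hyperfinite subfactor. The result stated in Theorem \ref{thm.Crigidgroups} is explicitly proved via an unnumbered theorem in Section~7 of the paper that does not require abelianness: if $A \subset qMq$ is amenable relative to $B$ with normalizer of finite index in $qMq$, then $A \prec_M B$. Given an isomorphism $\theta : R \rtimes \Gamma \recht R \rtimes \Lambda$, apply that theorem with $B = R$ and $A = \theta(R)$: $\theta(R)$ is hyperfinite hence amenable, and it is regular since $R$ is regular in $R \rtimes \Gamma$. This yields $\theta(R) \prec R$, and symmetrically $R \prec \theta(R)$. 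By \cite[Lemma 8.4]{IPP05} the two regular hyperfinite subfactors $\theta(R)$ and $R$ are then unitarily conjugate; after a unitary conjugacy, $\theta(R) = R$, which is exactly the definition of a cocycle conjugacy between $\Gamma \actson R$ and $\Lambda \actson R$. The entire argument is three lines and avoids both of your problematic steps.
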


\subsection*{Comments on the proofs}

In order to explain the main ideas of the paper, we outline the proof of the following special case of Theorem \ref{thm.main}. Assume that $\Gamma$ is a group with the CMAP and with a proper $1$-cocycle into the infinite multiple $\ell^2_\R(\Gamma)^{\oplus \infty}$ of the regular representation. Note that the free groups $\Gamma = \F_n$ satisfy these properties. Assume that $\Gamma \actson (B,\tau)$ is an arbitrary trace preserving action on the tracial von Neumann algebra $(B,\tau)$ and put $M = B \rtimes \Gamma$. Let $A \subset M$ be a von Neumann subalgebra that we assume, in this rough sketch, to be plainly amenable. Put $P := \cN_M(A)\dpr$. We want to prove that either $A \prec_M B$ or that $P$ is amenable relative to $B$.

\vspace{1ex}

{\bf Step 1~: reduction to the trivial action.} As we will see in Lemma \ref{lem.reduction}, we may assume that $\Gamma \actson (B,\tau)$ is the trivial action. To make this reduction from arbitrary actions to the trivial action, we use the comultiplication trick. So denote by $\Delta : M \recht M \ovt \rL(\Gamma)$ the normal $*$-homomorphism defined by $\Delta(b u_g) = bu_g \ot u_g$ for all $b \in B$ and $g \in \Gamma$. We view $M \ovt \rL(\Gamma)$ as the crossed product of $\Gamma$ acting trivially on $M$. We consider $\Delta(A) \subset M \ovt \rL(\Gamma)$. As we will see, it is rather straightforward to prove
\begin{itemize}
\item that $A \prec_M B$ if and only if $\Delta(A) \prec_{M \ovt \rL(\Gamma)} M \ot 1$,
\item and that $P$ is amenable relative to $B$ if and only if $\Delta(P)$ is amenable relative to $M \ot 1$.
\end{itemize}
So the result for arbitrary actions is an immediate consequence of the result for the trivial action.

From now on we will assume that $\Gamma \actson B$ is the trivial action. Hence $M$ equals the tensor product $M = B \ovt \rL(\Gamma)$.

\vspace{1ex}

{\bf\boldmath Step 2~: weak compactness relative to $B$.} The most important novelty of this paper is the proof that the action $\cN_M(A) \actson A$ satisfies a relative version w.r.t.\ $B$ of the weak compactness property of \cite[Definition 3.1]{OP07}. For this we only use the CMAP of $\Gamma$. So take a sequence of finitely supported Herz-Schur multipliers $f_n : \Gamma \recht \C$ that tend to $1$ pointwise and that satisfy $\limsup_n \|f_n\|\cb = 1$. Denote by $\vphi_n : M \recht M$ the associated completely bounded maps given by $\vphi_n(b \ot u_g) = f_n(g) b \ot u_g$ for all $b \in B$ and $g \in \Gamma$. The formula
$$\mu_n : M \otmin P\op \recht \C : \mu_n(x \ot y\op) := \tau(\vphi_n(x) E_A(y)) \quad\text{for all}\;\; x \in M, y \in P,$$
provides a sequence of continuous functionals on the C$^*$-algebra $M \otmin P\op$ satisfying
\begin{itemize}
\item $\limsup_n \|\mu_n\| = 1$,
\item $\lim_n \|\mu_n \circ \Ad(u \ot \ubar) - \mu_n \| = 0$ for all $u \in \cN_M(A)$, where $\ubar = (u\op)^*$.
\end{itemize}
Since moreover $\mu_n(1) \recht 1$, it follows that $\|\mu_n - \om_n\| \recht 0$, where $\om_n$ denotes the state on $M \otmin P\op$ defined as $\om_n = \|\mu_n\|^{-1} |\mu_n|$.

A crucial point in the continuation of the argument will be to construct a von Neumann algebra completion $\cN$ of $M \otmin P\op$ with the following two properties~:
\begin{itemize}
\item the states $\om_n$ are normal on $\cN$,
\item the von Neumann algebra $\cN$ splits as a tensor product $\cN = N \ovt \rL(\Gamma)$, with the natural copy of $\rL(\Gamma)$ inside $M \subset \cN$ corresponding to the copy of $\rL(\Gamma)$ inside $N \ovt \rL(\Gamma)$.
\end{itemize}
Choosing a standard representation of $N$ on the Hilbert space $H$, it follows that $\cN$ is standardly represented on $H \ot \ell^2(\Gamma)$. The states $\om_n$ are then implemented by canonical positive vectors $\xi_n \in H \ot \ell^2(\Gamma)$. These vectors $(\xi_n)$ inherit the almost invariance properties of $(\om_n)$.

\vspace{1ex}

{\bf\boldmath Step 3~: applying a malleable deformation $(\al_t)_{t \in \R}$ to the vectors $(\xi_n)$.} The group $\Gamma$ admits a proper $1$-cocycle $c : \Gamma \recht \ell^2_\R(\Gamma)^{\oplus \infty}$ into an infinite multiple of the regular representation. Associated with $c$ is a $1$-parameter family $(\psi_t)_{t > 0}$ of unital completely positive maps on $\cN$ given by
$$\psi_t(x \ot u_g) = \exp(-t \|c(g)\|^2) \, (x \ot u_g) \quad\text{for all}\;\; x \in N, g \in \Gamma \; .$$
By \cite{Si10} the $1$-parameter family $(\psi_t)_{t > 0}$ dilates as a \emph{malleable deformation} $(\al_t)_{t \in \R}$ by automorphisms of a larger von Neumann algebra $\cNtil \supset \cN$. This construction comes with a conditional expectation $E : \cNtil \recht \cN$ such that
$$\psi_{t^2/2}(x) = E(\al_t(x)) \quad\text{for all}\;\; x \in \cN, t \in \R \; .$$
The dichotomy in the conclusion of the theorem then arises as follows.
\begin{itemize}
\item Either the deformation $(\al_t)$ significantly moves the vectors $(\xi_n)$. Since these vectors $(\xi_n)$ have a certain almost invariance property under all $u \in \cN_M(A)$, this will lead to the amenability of $P$ relative to $B$.
\item Or the deformation $(\al_t)$ does not significantly move the vectors $(\xi_n)$. By the properness of the $1$-cocycle $c$, this will lead to $A \prec_M B$.
\end{itemize}

\section{Preliminaries}

To make this article as self-contained as possible we have chosen to include a rather extensive section with preliminaries.

\subsection{Terminology}

As we said above we call $(M,\tau)$ a \emph{tracial von Neumann algebra} if $M$ is a von Neumann algebra equipped with a faithful normal tracial state $\tau$.

Whenever $M$ is a von Neumann algebra and $A \subset M$ is a von Neumann subalgebra, we denote by $\cN_M(A)$ the group of unitaries $u \in \cU(M)$ that satisfy $u A u^* = A$. We call the von Neumann algebra $\cN_M(A)\dpr$ the \emph{normalizer} of $A$ inside $M$. We say that $A \subset M$ is \emph{regular} if its normalizer equals $M$. A \emph{Cartan subalgebra} of a II$_1$ factor $M$ is a maximal abelian, regular von Neumann subalgebra.

If $(M,\tau)$ and $(Q,\tau)$ are tracial von Neumann algebras, we call \emph{right $Q$-module} any Hilbert space equipped with a normal $*$-anti-representation of $Q$. We call \emph{$M$-$Q$-bimodule} any Hilbert space equipped with a normal $*$-representation of $M$ and a normal $*$-anti-representation of $Q$ with commuting ranges. We usually simple write $x \cdot \xi \cdot y$ to denote the left and right module actions of $x \in M$, $y \in Q$ on the vector $\xi$.

If $\cN$ is a von Neumann algebra and $M \subset \cN$ is a von Neumann subalgebra, a functional $\Om$ on $\cN$ is called \emph{$M$-central} if $\Om(S x) = \Om(x S)$ for all $S \in \cN$ and all $x \in M$.

A tracial von Neumann algebra $(M,\tau)$ is called \emph{amenable} if there exists an $M$-central state on $\B(\rL^2(M))$ whose restriction to $M$ equals $\tau$. We refer to Section \ref{sec.rel-amen} for more background on amenability.

\subsection{Intertwining by bimodules}\label{sec.intertwine}

We recall from \cite[Theorem 2.1 and Corollary 2.3]{Po03} the theory of \emph{intertwining-by-bimodules,} summarized in the following definition.

\begin{definition}\label{def.intertwine}
Let $(M,\tau)$ be a tracial von Neumann algebra and $P,Q \subset M$ possibly non-unital von Neumann subalgebras. We write $P \prec_M Q$, and say that $P$ embeds into $Q$ inside $M$, when one of the following equivalent conditions is satisfied.
\begin{itemize}
\item There exist projections $p \in P$, $q \in Q$, a normal $*$-homomorphism $\vphi : pPp \recht qQq$ and a nonzero partial isometry $v \in pMq$ such that $x v = v \vphi(x)$ for all $x \in pPp$.
\item It is impossible to find a net of unitaries $u_n \in \cU(P)$ satisfying $\|E_Q(x u_n y^*)\|_2 \recht 0$ for all $x,y \in 1_Q M 1_P$.
\end{itemize}
We write $P \prec^f_M Q$ if $P p \prec_M Q$ for every projection $p \in P' \cap 1_P M 1_P$.
\end{definition}

\subsection{Basic construction, Jones index, Connes tensor product}\label{sec.connesproduct}

Let $(Q,\tau)$ be a tracial von Neumann algebra and $\cK_Q$ a right Hilbert $Q$-module. Then the von Neumann algebra $\cN := \B(\cK) \cap (Q\op)'$ carries a canonical semifinite faithful normal trace $\Tr$ that can be characterized as follows. First recall that a vector $\xi \in \cK$ is called right bounded if there exists a $\kappa \geq 0$ such that $\|\xi x\| \leq \kappa \|x\|_2$ for all $x \in Q$. When $\xi \in \cK$ is right bounded we denote by $L_\xi \in \B(\rL^2(Q),\cK)$ the operator defined as $L_\xi x = \xi x$ for all $x \in Q$. For all right bounded vectors $\xi,\eta \in \cK$ we have that $L_\xi L_\eta^* \in \cN$, while $L_\eta^* L_\xi \in Q$. The right bounded vectors form a dense subspace of $\cK$ and the corresponding elements $L_\xi L_\eta^* \in \cN$ span a dense $*$-subalgebra of $\cN$. The trace $\Tr$ on $\cN$ can be characterized by the formula
$$\Tr(L_\xi L_\eta^*) = \tau(L_\eta^* L_\xi) \quad\text{for all right bounded vectors $\xi,\eta \in \cK$.}$$
When $Q \subset (M,\tau)$ is a von Neumann subalgebra, we denote by $e_Q$ the orthogonal projection of $\rL^2(M)$ onto $\rL^2(Q)$. Jones' \emph{basic construction} $\langle M,e_Q \rangle$ is the von Neumann algebra generated by $M$ and $e_Q$ on the Hilbert space $\rL^2(M)$. We have that $\langle M,e_Q \rangle = \B(\rL^2(M)) \cap (Q\op)'$. So, applying the above construction to the right $Q$-module $\rL^2(M)_Q$, we recover the usual semifinite faithful normal trace $\Tr$ on $\langle M,e_Q \rangle$ characterized by
$$\Tr(x e_Q y) = \tau(xy) \quad\text{for all}\;\; x,y \in M \; .$$
The number $\Tr(1)$ is called the \emph{Jones index} of $Q \subset M$ and is denoted by $[M:Q]$.

We also recall the Connes tensor product of bimodules. Assume that $\bim{M}{\cK}{Q}$ and $\bim{Q}{\cH}{P}$ are bimodules between tracial von Neumann algebras $M$, $Q$ and $P$. Denote by $\cK_0 \subset \cK$ the subspace of right $Q$-bounded vectors in $\cK$. The separation/completion of $\cK_0 \otalg \cH$ with respect to the scalar product
$$\langle \xi \ot_Q \eta , \xi' \ot_Q \eta' \rangle := \langle (L_{\xi'}^* L_\xi) \eta, \eta' \rangle$$
together with the bimodule action
$$x \cdot (\xi \ot_Q \eta) \cdot y := x\xi \ot_Q \eta y$$
yields an $M$-$P$-bimodule that is denoted by $\cK \ot_Q \cH$.

If $\bim{M}{\cK}{Q}$ is an $M$-$Q$-bimodule between the tracial von Neumann algebras $(M,\tau)$ and $(Q,\tau)$, we denote by $\bim{Q}{\overline{\cK}}{M}$ the contragredient bimodule on the adjoint Hilbert space $\overline{\cK}$ of $\cK$ with bimodule action
$$x \cdot \overline{\xi} \cdot y := \overline{y^* \xi x^*} \quad\text{for all}\;\; \xi \in \cK, x \in Q, y \in M \; .$$

Assume that $\bim{M}{\cK}{Q}$ is an $M$-$Q$-bimodule between the tracial von Neumann algebras $(M,\tau)$ and $(Q,\tau)$. Denote as above $\cN := \B(\cK) \cap (Q\op)'$, equipped with its canonical semifinite normal faithful trace $\Tr$ as explained above. Denote by $\cK_0 \subset \cK$ the subspace of right $Q$-bounded vectors. One checks that the formula
$$\cK_0 \otalg \overline{\cK_0} \recht \rL^2(\cN,\Tr) : \xi \ot_Q \overline{\eta} \mapsto L_\xi L_\eta^*$$
extends to an $M$-$M$-bimodular unitary operator of $\cK \ot_Q \overline{\cK}$ onto $\rL^2(\cN,\Tr)$.

Finally assume that $M = B \rtimes \Gamma$ is the crossed product of a countable group $\Gamma$ with a trace preserving action $\Gamma \actson (B,\tau)$. Whenever $\rho : \Gamma \recht \cU(K)$ is a unitary representation, we consider the $M$-$M$-bimodule $\bim{M}{\cK^\rho}{M}$ on the Hilbert space $\cK^\rho = K \ot \rL^2(M)$ with bimodule action
\begin{equation}\label{eq.Krho}
(b u_g) \cdot (\xi \ot x) \cdot y = \rho_g\xi \ot b u_g x y \quad\text{for all}\;\; b \in B, g \in \Gamma, \xi \in K, x,y \in M \; .
\end{equation}
If $\rho$ and $\eta$ are unitary representations, one has
$$\bim{M}{(\cK^\rho \ot_M \cK^\eta)}{M} \cong \bim{M}{\cK^{\rho \ot \eta}}{M}$$
as $M$-$M$-bimodules.

\subsection{Weak containment of representations and bimodules}\label{sec.weakcontainment}

If $\rho : \Gamma \recht \cU(K)$ and $\pi : \Gamma \recht \cU(H)$ are unitary representations of a countable group $\Gamma$, one says that $\rho$ is weakly contained in $\pi$ if $\|\rho(a)\| \leq \|\pi(a)\|$ for all $a \in \C \Gamma$. Similarly if $\bim{M}{\cK}{Q}$ and $\bim{M}{\cH}{Q}$ are bimodules between tracial von Neumann algebras $(M,\tau)$ and $(Q,\tau)$ we say that $\cK$ is weakly contained in $\cH$ if $\|\pi_\cK(x)\| \leq \|\pi_\cH(x)\|$ for all $x \in M \otalg Q\op$, where we denote by $\pi_\cK$, resp.\ $\pi_\cH$, the obvious $*$-representation associated with the bimodule structure.

Weak containment of bimodules is well behaved w.r.t.\ the Connes tensor product. If $\bim{M}{\cK}{Q}$ is weakly contained in $\bim{M}{\cH}{Q}$, then $\cK \ot_Q \cL$ is weakly contained in $\cH \ot_Q \cL$ for all $Q$-$P$-bimodules $\cL$. A similar statement holds for weak containment in the second variable.

If $M = B \rtimes \Gamma$ is a crossed product von Neumann algebra by a trace preserving action $\Gamma \actson (B,\tau)$ and if $\rho : \Gamma \recht \cU(K)$ and $\pi : \Gamma \recht \cU(H)$ are unitary representations, then $\rho$ is weakly contained in $\pi$ if and only if the $M$-$M$-bimodule $\cK^\rho$ described in \eqref{eq.Krho} is weakly contained in the $M$-$M$-bimodule $\cK^\pi$.

\subsection{Relative amenability of subalgebras and left amenability of bimodules}\label{sec.rel-amen}

A tracial von Neumann algebra $(M,\tau)$ is called \emph{amenable} if there exists an $M$-central state on $\B(\rL^2(M))$ whose restriction to $M$ equals $\tau$.
Connes' fundamental theorem in \cite{Co75} says that a tracial von Neumann algebra $M$ is amenable if and only if $M$ is hyperfinite, i.e.\ $M$ admits an increasing net of finite dimensional von Neumann subalgebras whose union is weakly dense in $M$. Also $M$ is amenable if and only if the trivial bimodule $\bim{M}{\rL^2(M)}{M}$ is weakly contained in the coarse bimodule $\bim{M}{(\rL^2(M) \ot \rL^2(M))}{M}$.

\begin{definition}[Section 2.2 in \cite{OP07}]\label{def.rel-amen}
Let $(M,\tau)$ be a tracial von Neumann algebra and let $P \subset pMp$ and $Q \subset M$ be von Neumann subalgebras. We say that $P$ is \emph{amenable relative to $Q$,} if the von Neumann algebra $p \langle M, e_Q \rangle p$ admits a $P$-central positive functional whose restriction to $pMp$ coincides with $\tau$.
\end{definition}

Recall that the basic construction von Neumann algebra $\langle M,e_Q \rangle$
coincides with the commutant of $Q\op$ acting on $\rL^2(M)$. Replacing in the above definition $\langle M, e_Q \rangle=(Q\op)'\cap \B(\rL^2M)$ by $(Q\op)'\cap \B(\cK)$ for an arbitrary $M$-$Q$-bimodule $\cK$, we arrive at the following definition (cf.\ \cite[Theorem 2.2]{Si10}).

\begin{definition}\label{def.left-amenable}
Let $(M,\tau)$ and $(Q,\tau)$ be tracial von Neumann algebras and $P \subset M$ a von Neumann subalgebra. We say that an $M$-$Q$-bimodule $\bim{M}{\cK}{Q}$ is \emph{left $P$-amenable} if there exists a $P$-central state $\Om$ on $\B(\cK) \cap (Q\op)'$ whose restriction to $M$ equals $\tau$.
\end{definition}

So by definition, for $P \subset pMp$ and $Q \subset M$ we have that $P$ is amenable relative to $Q$ if and only if the $pMp$-$Q$-bimodule $\bim{pMp}{p\rL^2(M)}{Q}$ is left $P$-amenable. Even more specifically, recall from \cite[Definition 3.2.1]{Po86} and \cite[Definition 2.1]{AD93} that a von Neumann subalgebra $Q \subset M$ is called \emph{co-amenable} if the whole of $M$ is amenable relative to $Q$. So $Q \subset M$ is co-amenable if and only if the bimodule $\bim{M}{\rL^2(M)}{Q}$ is left $M$-amenable.

Next note that Definition \ref{def.left-amenable} generalizes the notion of left amenability of bimodules introduced in \cite{AD93}. More precisely, an $M$-$Q$-bimodule $\bim{M}{\cK}{Q}$ is left $M$-amenable in the sense of Definition \ref{def.left-amenable} if and only if $\bim{M}{\cK}{Q}$ is left amenable in the sense of \cite[Definition 2.1]{AD93}. This follows immediately from Proposition \ref{prop.char} below.

Finally left amenability of bimodules has its origin in the concept of an \emph{amenable representation,} see \cite{Be89}. To make this link explicit, assume that $M := B \rtimes \Gamma$ is the crossed product of a countable group by a trace preserving action $\Gamma \actson (B,\tau)$. Every unitary representation $\rho : \Gamma \recht \cU(K)$ gives rise to an $M$-$M$-bimodule $\cK^\rho$ given by \eqref{eq.Krho}. This $M$-$M$-bimodule $\cK^\rho$ is left $M$-amenable if and
only if $\rho$ is an amenable representation in the sense of \cite[Definition 1.1]{Be89}, i.e.\ if and only if $\B(K)$ admits an $(\Ad \rho_g)_{g \in \Gamma}$-invariant state (see e.g.\ \cite[Proposition 3.3]{AD93}).

The proof of the following proposition is almost identical to the proof of \cite[Theorem 2.1]{OP07}. Part of the proposition also appears in \cite[Theorem 2.2]{Si10}. We nevertheless provide full details for the convenience of the reader. We refer to sections \ref{sec.connesproduct} and \ref{sec.weakcontainment} for the relevant terminology on bimodules, tensor products and weak containment.

\begin{proposition}\label{prop.char}
Let $(M,\tau)$ and $(Q,\tau)$ be tracial von Neumann algebras and $P \subset M$ a von Neumann subalgebra. Let $\bim{M}{\cK}{Q}$ be an $M$-$Q$-bimodule and denote $\cN := \B(\cK) \cap (Q\op)'$ with its canonical semifinite trace $\Tr$ as in Section \ref{sec.connesproduct}. Define the contractive linear map
$$\cT : \rL^1(\cN,\Tr) \recht \rL^1(M,\tau) : \tau(\cT(S) x) = \Tr(S x) \quad\text{for all}\;\; S \in \cN , x \in M \; .$$
Then the following statements are equivalent.
\begin{enumerate}
\item The $M$-$Q$-bimodule $\bim{M}{\cK}{Q}$ is left $P$-amenable.
\item There exists a net $\xi_n \in \rL^2(\cN,\Tr)^+$ satisfying the following properties.
\begin{itemize}
\item $0 \leq \cT(\xi_n^2) \leq 1$ for all $n$ and $\lim_n \|\cT(\xi_n^2) - 1\|_1 = 0$.
\item For all $y \in P$ we have $\lim_n \|y \xi_n - \xi_n y\|_2 = 0$.
\end{itemize}
\item The $M$-$P$-bimodule $\bim{M}{\rL^2(M)}{P}$ is weakly contained in the $M$-$P$-bimodule $\cK \ot_Q \overline{\cK}$.
\item There exists a $Q$-$P$-bimodule $\bim{Q}{\cH}{P}$ such that $\bim{M}{\rL^2(M)}{P}$ is weakly contained in the $M$-$P$-bimodule $\cK \ot_Q \cH$.
\item There exists a tracial von Neumann algebra $(N,\tau)$ and a $Q$-$N$-bimodule $\bim{Q}{\cH}{N}$ such that the $M$-$N$-bimodule $\cK \ot_Q \cH$ is left $P$-amenable.
\end{enumerate}
\end{proposition}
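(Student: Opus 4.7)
My plan is to establish the five equivalences via the cycle $(1) \Rightarrow (2) \Rightarrow (3) \Rightarrow (4) \Rightarrow (5) \Rightarrow (1)$, adapting the proof of \cite[Theorem 2.1]{OP07} from the basic construction $\langle M, e_Q \rangle$ to the general commutant $\cN = \B(\cK) \cap (Q\op)'$. The central device is the $M$-$M$-bimodular unitary identification $\cK \ot_Q \overline{\cK} \cong \rL^2(\cN,\Tr)$ recalled in Section \ref{sec.connesproduct}, which lets one freely pass between operator-algebraic language on $\cN$ and bimodule language on $\cK$.

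For the technically hardest step $(1) \Rightarrow (2)$, I would run a Day-type convexity argument. Given a $P$-central state $\Om$ on $\cN$ with $\Om|_M = \tau$, semifiniteness of $\Tr$ gives a net $\xi_n \in \rL^2(\cN,\Tr)^+$ such that the normal positive functionals $T \mapsto \Tr(T\xi_n^2)$ converge weak-$*$ to $\Om$. By the defining identity $\tau(\cT(\xi_n^2) x) = \Tr(x \xi_n^2)$, the restriction condition $\Om|_M = \tau$ translates to $\cT(\xi_n^2) \recht 1$ weakly in $\rL^1(M,\tau)$, while $P$-centrality of $\Om$ translates to $\|y\xi_n - \xi_n y\|_2 \recht 0$ weakly for each $y \in P$. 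A standard Hahn--Banach/Mazur convexification, carried out simultaneously in the $\rL^1$- and the $\rL^2$-norms and using the fact that the $\rL^2$-positive cone is stable under convex combinations, then replaces the net by one for which both convergences hold in norm, yielding (2).

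The implication $(2) \Rightarrow (3)$ is obtained by transporting the $\xi_n$ across the isomorphism $\rL^2(\cN,\Tr) \cong \cK \ot_Q \overline{\cK}$: after normalization they become unit vectors in $\cK \ot_Q \overline{\cK}$ that are asymptotically tracial on $M$ (because $\cT(\xi_n^2) \recht 1$) and asymptotically $P$-central, which is precisely the criterion for weak containment of $\bim{M}{\rL^2(M)}{P}$. The step $(3) \Rightarrow (4)$ is immediate by setting $\cH := \overline{\cK}$ viewed as a $Q$-$P$-bimodule. For $(4) \Rightarrow (5)$ I set $N := P$ and keep the same $\cH$: the weak containment in (4) yields unit vectors in $\cK \ot_Q \cH$ whose associated vector states are asymptotically tracial on $M$ and asymptotically $P$-central, so any weak-$*$ cluster point gives a $P$-central state on $\B(\cK \ot_Q \cH) \cap (P\op)'$ restricting to $\tau$ on $M$, as required by (5).

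Finally $(5) \Rightarrow (1)$ is a direct restriction argument: for any $Q$-$N$-bimodule $\cH$ the assignment $x \mapsto x \ot_Q 1$ defines a unital normal $*$-homomorphism $\B(\cK) \cap (Q\op)' \recht \B(\cK \ot_Q \cH) \cap (N\op)'$ that extends the inclusion $M \subset \cdot$, so pulling back a $P$-central state witnessing (5) produces one witnessing (1). I expect the convexification in $(1) \Rightarrow (2)$ to be the only genuinely delicate point, since it requires careful bookkeeping of convex combinations across two different function spaces while preserving positivity in $\rL^2(\cN,\Tr)^+$; all other implications are formal consequences of the bimodule/commutant identification and of functoriality of the Connes tensor product and weak containment.
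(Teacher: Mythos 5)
Your plan tracks the paper's proof closely: the bimodule/commutant identification $\rL^2(\cN,\Tr) \cong \cK \ot_Q \overline{\cK}$, the Day-type convexity argument for $(1)\Rightarrow(2)$, and the normal $*$-homomorphism $\Theta : \B(\cK) \cap (Q\op)' \recht \B(\cK \ot_Q \cH) \cap (N\op)'$, $T \mapsto T \ot_Q 1$, all appear in the paper. Closing the cycle via $(4)\Rightarrow(5)\Rightarrow(1)$ with $N := P$ merely factors the paper's direct proof of $(4)\Rightarrow(1)$ through condition $(5)$ and is a cosmetic reorganization; the remaining implications match the paper.

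The one step where your argument as written would fail is the convexification in $(1)\Rightarrow(2)$. Mazur's lemma cannot be run ``simultaneously in the $\rL^1$- and the $\rL^2$-norms'' at the level of the $\xi_n \in \rL^2(\cN,\Tr)^+$, because the constraint $\cT(\xi_n^2)\recht 1$ is not affine in $\xi_n$: a convex combination of positive $\rL^2$-vectors does not have as its square the corresponding convex combination of the squares, so the $\rL^2$-cone being convex is of no help here. The convexification must be performed on $S_n := \xi_n^2 \in \rL^1(\cN,\Tr)^+$, where both conditions --- $\cT(S_n) \recht 1$ in $\rL^1(M,\tau)$ and $yS_n - S_n y \recht 0$ in $\rL^1(\cN,\Tr)$ --- are linear. (Note also that $P$-centrality of $\Om$ gives you this latter weak $\rL^1$-statement, not any weak statement about $y\xi_n - \xi_n y$ in $\rL^2$ as you write.) Only after the $\rL^1$-convexification do you set $\xi_n := S_n^{1/2}$, and then you need the Powers--St{\o}rmer inequality to upgrade $\|yS_n - S_n y\|_1 \recht 0$ to $\|y\xi_n - \xi_n y\|_2 \recht 0$; your outline omits this. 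You also omit the truncation step, cutting $S_n$ by the spectral projections $\chi_{[0,1+\eps]}(\cT(S_n))$ and rescaling, which is required to secure the uniform bound $0 \leq \cT(\xi_n^2) \leq 1$ demanded by condition $(2)$; neither the weak$^*$ approximation nor the passage to convex combinations produces this bound by itself. These are precisely the additional manoeuvres the paper carries out in the implication $(1)\Rightarrow(2)$.
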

\begin{proof}
Assume that condition 1 holds. Take a $P$-central state $\Om \in \cN^*$ whose restriction to $M$ equals $\tau$. Identifying $\cN_* = \rL^1(\cN,\Tr)$, we can take a net of positive elements $S_n \in \rL^1(\cN,\Tr)^+$ such that $\Tr(S_n) = 1$ for all $n$ and such that $S_n \recht \Om$ in the weak$^*$ topology on $\cN^*$. It follows that
$\cT(S_n) \recht 1$ in the weak topology on $\rL^1(M,\tau)$ and that for all $y \in P$ we have that $y S_n - S_n y \recht 0$ in the weak topology on $\rL^1(\cN,\Tr)$.
After a passage to convex combinations we have $\|\cT(S_n)-1\|_1 \recht 0$ and $\|y S_n - S_n y\|_1 \recht 0$ for all $y \in P$. We will further modify the net $(S_n)$ in such a way that $0 \leq \cT(S_n) \leq 1$ for all $n$. For this we need the following standard functional calculus manipulations.

For every $\eps > 0$ and every $n$ denote by $p_{\eps,n} \in M$ the spectral projection $p_{\eps,n} := \chi_{[0,1+\eps]}(\cT(S_n))$.
Since $\|1-\cT(S_n)\|_1 \recht 0$, one checks that for every fixed $\eps > 0$ we have
$$\|S_n^{1/2} p_{\eps,n} - S_n^{1/2}\|^2_2 = \Tr((1-p_{\eps,n}) S_n) = \tau((1-p_{\eps,n}) \cT(S_n)) \recht 0 \quad\text{as $n \recht \infty$.}$$
So, for every fixed $\eps > 0$, we have $\lim_n \|p_{\eps,n} S_n p_{\eps_n} - S_n \|_1 = 0$. Put $T_{\eps,n} := (1+\eps)^{-1} p_{\eps,n} S_n p_{\eps,n}$. Then, for every $\eps > 0$, we have
$$\limsup_n \|T_{\eps,n} - S_n \|_1 \leq \eps \quad\text{and}\quad 0 \leq \cT(T_{\eps,n}) \leq 1 \quad\text{for all $n$.}$$
Reorganizing the $T_{\eps,n}$ we find a net $T_i \in \rL^1(\cN,\Tr)^+$ such that $0 \leq \cT(T_i) \leq 1$ for all $i$, such that $\|\cT(T_i) - 1\|_1 \recht 0$ and $\|y T_i - T_i y \|_1 \recht 0$ for all $y \in P$.

Defining $\xi_i := T_i^{1/2}$, we obtain a net in $\rL^2(\cN,\Tr)^+$ which, thanks to the Powers-St{\o}rmer inequality satisfies condition~2 in the formulation of the proposition.

Next assume that $(\xi_n)$ is a net in $\rL^2(\cN,\Tr)^+$ satisfying condition~2. Recall from Section \ref{sec.connesproduct} that $\rL^2(\cN,\Tr)$ can be identified with $\cK \ot_Q \overline{\cK}$ as an $M$-$M$-bimodule. Viewing $\xi_n$ as a net of vectors $\cK \ot_Q \overline{\cK}$ we get that
$$\langle x \xi_n y , \xi_n \rangle \recht \tau(xy) \quad\text{for all}\;\; x \in M, y \in P \; .$$
Hence the $M$-$P$-bimodule $\bim{M}{\rL^2(M)}{P}$ is weakly contained in the $M$-$P$-bimodule $\cK \ot_Q \overline{\cK}$. So condition~3 holds.

It is trivial that condition~3 implies condition~4.

We next prove that condition~4 implies condition~1. Condition~4 yields a net $(\xi_n)$ in an infinite multiple of $\cK \ot_Q \cH$ satisfying
$$\langle x \xi_n , \xi_n\rangle \recht \tau(x) \quad\text{for all}\;\; x \in M \quad\text{and}\quad \|y \xi_n - \xi_n y\| \recht 0 \quad\text{for all}\;\; y \in P \; .$$
The formula $S (\xi \ot_Q \eta) = S\xi \ot_Q \eta$ provides a normal representation of $\cN$ on $\cK \ot_Q \cH$ that commutes with the right $P$-module action on $\cK \ot_Q \cH$.
Choosing a state $\Om \in \cN^*$ as a weak$^*$ limit point of the net of states $S \mapsto \langle S \xi_n,\xi_n\rangle$, we have found a $P$-central state $\Om$ on $\cN$ whose restriction to $M$ equals $\tau$. So condition~1 holds.

We finally prove the equivalence of conditions 1 and 5. One implication being trivial by taking $N=Q$ and $\cH = \rL^2(Q)$, assume that the $M$-$N$-bimodule $\cL := \cK \ot_Q \cH$ is left $P$-amenable. The formula $S(\xi \ot_Q \eta) = S\xi \ot_Q \eta$ provides a normal $*$-homomorphism
$$\Theta : \B(\cK) \cap (Q\op)' \recht \B(\cL) \cap (N\op)'$$
whose restriction to $M$ is the identity. Given a $P$-central state $\Om$ on $\B(\cL) \cap (N\op)'$ with $\Om_{|M} = \tau$, the composition $\Om \circ \Theta$ is a $P$-central state on $\B(\cK) \cap (Q\op)'$ whose restriction to $M$ equals $\tau$. So condition~1 holds and the proposition is proven.
\end{proof}

\begin{corollary}\label{cor.weakcontainment}
Let $(M,\tau)$ and $(Q,\tau)$ be tracial von Neumann algebras and $P \subset M$ a von Neumann subalgebra. Let $\bim{M}{\cK}{Q}$ and $\bim{M}{\cK'}{Q}$ be a $M$-$Q$-bimodules. If $\bim{M}{\cK}{Q}$ is left $P$-amenable and weakly contained in $\bim{M}{\cK'}{Q}$, then also $\bim{M}{\cK'}{Q}$ is left $P$-amenable.
\end{corollary}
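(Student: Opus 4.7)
The plan is to deduce this corollary directly from Proposition~\ref{prop.char}, exploiting the equivalence between left $P$-amenability of an $M$-$Q$-bimodule and a statement of weak containment that has the ``external bimodule'' $\cK$ only in the first tensor slot. Concretely I would use the equivalence of conditions~1 and~4 in Proposition~\ref{prop.char}, which is the form most compatible with a hypothesis that only controls weak containment of $\cK$ in $\cK'$ as $M$-$Q$-bimodules.

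Starting from the assumption that $\bim{M}{\cK}{Q}$ is left $P$-amenable, apply the implication $(1)\Rightarrow(4)$ of Proposition~\ref{prop.char} to obtain a tracial von Neumann algebra $(N,\tau)$, which we can simply take to be $N=Q$ with $\bim{Q}{\cH}{P}:=\bim{Q}{\rL^2(Q)}{P}$ when combining with $(1)\Leftrightarrow(3)$, or more generally a $Q$-$P$-bimodule $\bim{Q}{\cH}{P}$, such that
\[
\bim{M}{\rL^2(M)}{P} \;\text{is weakly contained in}\; \bim{M}{(\cK \ot_Q \cH)}{P}.
\]
Now invoke the stability of weak containment under the Connes tensor product recalled in Section~\ref{sec.weakcontainment}: since $\bim{M}{\cK}{Q}$ is weakly contained in $\bim{M}{\cK'}{Q}$ by hypothesis, it follows that
\[
\bim{M}{(\cK \ot_Q \cH)}{P} \;\text{is weakly contained in}\; \bim{M}{(\cK' \ot_Q \cH)}{P}.
\]
Chaining these two weak containments, we conclude that $\bim{M}{\rL^2(M)}{P}$ is weakly contained in $\bim{M}{(\cK' \ot_Q \cH)}{P}$. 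Applying now the reverse implication $(4)\Rightarrow(1)$ of Proposition~\ref{prop.char} to $\bim{M}{\cK'}{Q}$, we obtain that $\bim{M}{\cK'}{Q}$ is left $P$-amenable, as desired.

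There is essentially no obstacle here: the whole content has been packaged into Proposition~\ref{prop.char} and into the formal stability of weak containment of bimodules under Connes tensor products. The only point to double-check is that the weak containment statement behaves monotonically in the first tensor slot with the prescribed $P$-module structure on the right, which is exactly the property invoked at the start of Section~\ref{sec.weakcontainment}. Note also that we carefully avoided condition~3 of Proposition~\ref{prop.char}, since that would require controlling weak containment of $\overline{\cK}$ in $\overline{\cK'}$ as $Q$-$M$-bimodules; although this follows from the adjoint-invariance of norms, using condition~4 is more economical and entirely sidesteps having to pass to contragredients.
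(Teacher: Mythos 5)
Your proof is correct and follows essentially the same one-line route as the paper, which invokes transitivity of weak containment, its stability under the Connes tensor product, and Proposition~\ref{prop.char} (the paper cites condition~3; you use condition~4, thereby sidestepping the harmless contragredient step). The only slip is your parenthetical remark that one "can simply take $N=Q$ with $\cH = \rL^2(Q)$" — that would give $\cK \ot_Q \cH \cong \cK$, which is not what condition~3 provides; you presumably meant $\cH = \overline{\cK}$ — but this is not used in the actual argument and the rest is fine.
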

\begin{proof}
Since weak containment of bimodules is transitive and preserved under the Connes tensor product of bimodules, this is a direct consequence of the characterization of left $P$-amenability by condition~3 in Proposition \ref{prop.char}.
\end{proof}

\begin{corollary}\label{cor.passage-rel-amen}
Let $(M,\tau)$ and $(Q,\tau)$ be tracial von Neumann algebras and $P_1,P_2 \subset M$ von Neumann subalgebras. Let $\bim{M}{\cK}{Q}$ be an $M$-$Q$-bimodule.

If $\bim{M}{\cK}{Q}$ is a left $P_1$-amenable $M$-$Q$-bimodule and if $P_2$ is amenable relative to $P_1$, then $\bim{M}{\cK}{Q}$ is also left $P_2$-amenable.

In particular, if $P_1 \subset P_2$ is an inclusion of finite index and if $\bim{M}{\cK}{Q}$ is a left $P_1$-amenable $M$-$Q$-bimodule, then $\bim{M}{\cK}{Q}$ is also left $P_2$-amenable.
\end{corollary}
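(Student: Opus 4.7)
My plan is to prove the first (main) statement by composing two applications of the equivalence (1)$\Leftrightarrow$(4) in Proposition \ref{prop.char}, exploiting that weak containment of bimodules is preserved under Connes tensor products (as recalled in Section \ref{sec.weakcontainment}). The key identification to keep in mind is the one noted right after Definition \ref{def.left-amenable}: $P_2$ is amenable relative to $P_1$ inside $M$ if and only if the $M$-$P_1$-bimodule $\bim{M}{\rL^2(M)}{P_1}$ is left $P_2$-amenable. So both hypotheses get rephrased as left-amenability statements to which Proposition \ref{prop.char}(4) applies directly.

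Concretely, applying (1)$\Leftrightarrow$(4) to the hypothesis that $\bim{M}{\cK}{Q}$ is left $P_1$-amenable produces a $Q$-$P_1$-bimodule $\bim{Q}{\cH_1}{P_1}$ such that $\bim{M}{\rL^2(M)}{P_1}$ is weakly contained in $\cK \ot_Q \cH_1$ as $M$-$P_1$-bimodules. Applying (1)$\Leftrightarrow$(4) to the relative amenability hypothesis produces a $P_1$-$P_2$-bimodule $\bim{P_1}{\cH_0}{P_2}$ such that $\bim{M}{\rL^2(M)}{P_2}$ is weakly contained in $\rL^2(M) \ot_{P_1} \cH_0$ as $M$-$P_2$-bimodules. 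Now I Connes-tensor the first weak containment on the right with $\cH_0$ over $P_1$, use transitivity of weak containment, and chain:
\[
\bim{M}{\rL^2(M)}{P_2} \;\prec\; \bim{M}{(\rL^2(M) \ot_{P_1} \cH_0)}{P_2} \;\prec\; \bim{M}{(\cK \ot_Q \cH_1 \ot_{P_1} \cH_0)}{P_2} \;.
\]
Setting $\cH := \cH_1 \ot_{P_1} \cH_0$ as a $Q$-$P_2$-bimodule, condition (4) of Proposition \ref{prop.char}, applied in the reverse direction with $P := P_2$, yields that $\bim{M}{\cK}{Q}$ is left $P_2$-amenable, which is the desired conclusion.

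For the particular finite-index statement, the only thing left to verify is that a finite-index inclusion $P_1 \subset P_2$ makes $P_2$ amenable relative to $P_1$ inside $M$; the main statement then applies. This is classical: when $[P_2 : P_1] < \infty$, the basic construction $\langle P_2, e_{P_1} \rangle$ is a finite von Neumann algebra whose normalized trace is a $P_2$-central state extending $\tau_{|P_2}$. Composing with the $P_2$-bimodular conditional expectation from $\langle M, e_{P_1} \rangle$ onto $\langle P_2, e_{P_1} \rangle$ (the Pimsner--Popa map, available because the inclusion has finite Jones index and one has a finite orthonormal basis of $P_2$ over $P_1$) gives the required $P_2$-central state on $\langle M, e_{P_1}\rangle$ restricting to $\tau$ on $M$.

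Neither step is really an obstacle; the small point to get right is the direction of the tensor products and which sides carry which actions, so I would be careful to keep track of bimodule structures (in particular that $\cH_1 \ot_{P_1} \cH_0$ really is a $Q$-$P_2$-bimodule, using that $\cH_1$ is right-$P_1$ and $\cH_0$ is left-$P_1$). Once the bookkeeping is right, the proof is a two-line chase through Proposition \ref{prop.char}.
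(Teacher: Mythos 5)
Your main argument is correct and is essentially the paper's: the paper uses condition~3 of Proposition~\ref{prop.char} (with the explicit bimodule $\overline{\cK}$ rather than an abstract $\cH_1$) and chains the two weak containments through Connes tensor products in exactly the way you describe. For the finite-index case the paper simply asserts it is ``trivially'' true; your sketch is morally right but the map onto $\langle P_2, e_{P_1}\rangle$ is not a conditional expectation (that algebra sits as a nonunital corner $e_{P_2}\langle M, e_{P_1}\rangle e_{P_2}$), so one should instead take $\Om(T):=[P_2:P_1]^{-1}\Tr_{\langle P_2,e_{P_1}\rangle}(e_{P_2}Te_{P_2})$ and check directly that it is $P_2$-central and restricts to $\tau$ on $M$.
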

\begin{proof}
By condition~3 in Proposition \ref{prop.char} we have that $\bim{M}{\rL^2(M)}{P_1}$ is weakly contained in $\cK \ot_Q \overline{\cK}$. Hence
$$\bim{M}{\bigl(\rL^2(M) \ot_{P_1} \rL^2(M)\bigr)}{M} \quad\text{is weakly contained in}\quad \bim{M}{\bigl( \cK \ot_Q \overline{\cK} \ot_{P_1} \cK \ot_Q \overline{\cK}\bigr)}{M} \; .$$
Since $P_2$ is amenable relative to $P_1$, we know from condition~3 in Proposition \ref{prop.char} that $\bim{M}{\rL^2(M)}{P_2}$ is weakly contained in $\bim{M}{\bigl(\rL^2(M) \ot_{P_1} \rL^2(M)\bigr)}{P_2}$. In combination with the previous line and writing $\bim{Q}{\cH}{P_2} := \bim{Q}{\bigl(\overline{\cK} \ot_{P_1} \cK \ot_Q \overline{\cK}\bigr)}{P_2}$ we conclude that
$$\bim{M}{\rL^2(M)}{P_2} \quad\text{is weakly contained in}\quad \bim{M}{(\cK \ot_Q \cH)}{P_2} \; .$$
Condition~4 in Proposition \ref{prop.char} implies that $\bim{M}{\cK}{Q}$ is left $P_2$-amenable.

If $P_1 \subset P_2$ has finite index, then $P_2$ is trivially amenable relative to $P_1$ and hence also the final statement is proven.
\end{proof}

We next prove a result where the amenability of $P$ relative to two subalgebras $Q_1$ and $Q_2$ implies the amenability of $P$ relative to $Q_1 \cap Q_2$.
Obviously such a result cannot hold if $Q_1$ and $Q_2$ are in a generic position where typically $Q_1 \cap Q_2 = \C1$. So recall that two von Neumann subalgebras of a tracial von Neumann algebra $(M,\tau)$ are said to form a \emph{commuting square} if $E_{Q_1} \circ E_{Q_2} = E_{Q_2} \circ E_{Q_1}$, where $E_{Q_i}$ denotes the unique trace preserving conditional expectation of $M$ onto $Q_i$. In that case $E_{Q_1} \circ E_{Q_2}$ is the unique trace preserving conditional expectation of $M$ onto $Q_1 \cap Q_2$.

\begin{proposition}\label{prop.intersection}
Let $(M,\tau)$ be a tracial von Neumann algebra with von Neumann subalgebras $Q_1,Q_2 \subset M$. Assume that $Q_1$ and $Q_2$ form a commuting square and that $Q_1$ is regular in $M$.

If a von Neumann subalgebra $P \subset pMp$ is amenable relative to both $Q_1$ and $Q_2$, then $P$ is amenable relative to $Q_1 \cap Q_2$.
\end{proposition}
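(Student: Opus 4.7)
The plan is to prove the proposition by transferring everything to the bimodule language via Proposition \ref{prop.char} and then exploiting the commuting square to identify $\rL^2(\langle M,e_Q\rangle)$ as a sub-$M$-$M$-bimodule of the iterated Connes tensor product built from the two basic constructions associated to $Q_1$ and $Q_2$. The regularity of $Q_1$ is what allows the weak containment obtained by combining the two hypotheses to land inside this sub-bimodule rather than in its orthogonal complement.

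Concretely, I would proceed in three formal steps and one crucial geometric step. First, condition (3) of Proposition \ref{prop.char} turns the hypotheses into the weak containments $\bim{pMp}{p\rL^2(M)}{P} \prec \bim{pMp}{(p\rL^2(M)\otimes_{Q_i}\rL^2(M)p)}{P}$ for $i=1,2$. Second, using that weak containment of bimodules is preserved under Connes tensor products, I tensor the $Q_1$-weak containment on the right by $\bim{M}{(\rL^2(M)\otimes_{Q_2}\rL^2(M))}{M}$ and then apply the $Q_2$-weak containment in the first factor, yielding
\[
\bim{pMp}{p\rL^2(M)}{P} \prec \bim{pMp}{\bigl(p\rL^2(M)\otimes_{Q_1}\rL^2(M)\otimes_{Q_2}\rL^2(M)p\bigr)}{P}.
\]
Third, I would verify that the formula $V(\hat a\otimes_Q\hat b):=\hat a\otimes_{Q_1}\hat 1\otimes_{Q_2}\hat b$ defines a well-defined $M$-$M$-bimodular isometry
\[
V:\rL^2(M)\otimes_Q\rL^2(M)\hookrightarrow\rL^2(M)\otimes_{Q_1}\rL^2(M)\otimes_{Q_2}\rL^2(M),
\]
where well-definedness is forced by $Q\subset Q_1\cap Q_2$ and the isometry property by the commuting-square identity $E_{Q_2}\circ E_{Q_1}=E_Q$, which makes the Connes inner product on the right-hand side reduce to $\tau(E_Q(a^*a)E_Q(bb^*))$. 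Explicitly, the image of $V$ can be identified with $\rL^2(M)\otimes_{Q_1}\overline{Q_1Q_2}^{\|\cdot\|_2}\otimes_{Q_2}\rL^2(M)$, while its orthogonal complement splits as $\rL^2(M)\otimes_{Q_1}\cH_0\otimes_{Q_2}\rL^2(M)$ with $\cH_0:=\rL^2(M)\ominus\overline{Q_1Q_2}^{\|\cdot\|_2}$.

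The crucial step, and the main obstacle, is to show that under the regularity of $Q_1$ the approximately $P$-invariant and $M$-tracial vectors exhibiting the weak containment of step two must in fact lie inside the image of $V$, up to asymptotically negligible error. Equivalently, I need to rule out that $\bim{pMp}{p\rL^2(M)}{P}$ is weakly contained in $\bim{pMp}{(p\rL^2(M)\otimes_{Q_1}\cH_0\otimes_{Q_2}\rL^2(M)p)}{P}$. The strategy is to average the Connes tensor vectors over unitaries in $\cN_M(Q_1)$: because $Q_1$ is regular, these normalizers generate $M$, and their conjugation action on the middle factor $\cH_0$ (which is the piece of $\rL^2(M)$ orthogonal to $\overline{Q_1Q_2}$ as a $Q_1$-$Q_2$-bimodule) should be sufficiently dispersive to prevent the trivial $M$-$M$-sub-bimodule from sitting weakly inside this orthogonal complement. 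Once this is established, the weak containment refines to $\bim{pMp}{p\rL^2(M)}{P}\prec\bim{pMp}{(p\rL^2(M)\otimes_Q\rL^2(M)p)}{P}$, and Proposition \ref{prop.char} gives amenability of $P$ relative to $Q$. Steps one through three are formal bimodule manipulations; the content lies entirely in the normalizer-averaging step, which is where both the commuting-square hypothesis and the regularity of $Q_1$ combine.
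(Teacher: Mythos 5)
Your Steps 1--3 are sound and in fact reproduce the base case of the paper's argument: the map $V:\hat a\ot_Q\hat b\mapsto\hat a\ot_{Q_1}\hat 1\ot_{Q_2}\hat b$ is a well-defined $M$-$M$-bimodular isometry precisely because $E_{Q_2}\circ E_{Q_1}=E_Q$, and the weak containment of $\bim{pMp}{p\rL^2(M)}{P}$ in $\bim{pMp}{(p\,\rL^2(\langle M,e_{Q_1}\rangle)\ot_M\rL^2(\langle M,e_{Q_2}\rangle)\,p)}{P}$ is exactly what the paper establishes (it does so concretely with vectors $\mu_i\ot_M\xi_j$, which sidesteps the tensoring-of-weak-containments manipulation you gloss over). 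The ``crucial step'' is where the proposal fails, and it fails in conception, not just execution. First, the claimed ``equivalence'' does not hold: weak containment in a direct sum of bimodules is a supremum of representation norms, and failure on one summand does not force containment in the other, since the maximizing summand may vary with the test element of $M\otalg P\op$. Second, and more importantly, the orthogonal complement $\rL^2(M)\ot_{Q_1}\cH_0\ot_{Q_2}\rL^2(M)$ is not disjoint from $\rL^2(\langle M,e_Q\rangle)$ in any useful sense; it too turns out to be contained in a multiple of $\rL^2(\langle M,e_Q\rangle)$, so there is nothing to ``rule out,'' and no ``dispersiveness'' argument would be forthcoming in this generality.

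What the regularity of $Q_1$ actually buys is additive, not exclusionary. For \emph{every} pair $u,v\in\cN_M(Q_1)$, not just $u=v=1$, the closed $M$-$M$-sub-bimodule $\cH_{u,v}\subset\cH:=\rL^2(\langle M,e_{Q_1}\rangle)\ot_M\rL^2(\langle M,e_{Q_2}\rangle)$ spanned by $\{xe_{Q_1}u\ot_M ve_{Q_2}y : x,y\in M\}$ is isometrically $M$-$M$-bimodular isomorphic to $\rL^2(\langle M,e_Q\rangle)$ via $xe_{Q_1}u\ot_M ve_{Q_2}y\mapsto xuv\ot_Q y$. The isometry uses both the commuting square and the identity $\Ad(uv)^*\circ E_{Q_1}=E_{Q_1}\circ\Ad(uv)^*$ (valid since $uv\in\cN_M(Q_1)$), which combine to give $E_{Q_2}\bigl((uv)^*E_{Q_1}(x'^*x)(uv)\bigr)=E_Q\bigl((x'uv)^*(xuv)\bigr)$. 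Regularity of $Q_1$ then guarantees that these overlapping, non-orthogonal copies of $\rL^2(\langle M,e_Q\rangle)$ span $\cH$ densely, so $\cH$ embeds into a multiple of $\rL^2(\langle M,e_Q\rangle)$, and condition~3 of Proposition~\ref{prop.char} concludes. In short: you found one good copy inside $\cH$ and tried to banish its complement; the paper instead tiles all of $\cH$ with good copies.
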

\begin{proof}
We use the notation
$$\cT_i : \rL^1(\langle M,e_{Q_i} \rangle) \recht \rL^1(M) : \tau(\cT_i(S) x) = \Tr(S x) \quad\text{for all}\;\; S \in \rL^1(\langle M,e_{Q_i} \rangle) \; , \; x \in M \; .$$
Since $P$ is amenable relative to $Q_1$ and relative to $Q_2$, condition~2 in Proposition \ref{prop.char} provides nets $\mu_i \in p\rL^2(\langle M,e_{Q_1} \rangle)^+p$ and $\xi_j \in p\rL^2(\langle M,e_{Q_2} \rangle)^+p$ satisfying the following properties.
$$0 \leq \cT_1(\mu_i^2) \leq p \;\;\text{for all $i$,}\quad \|\cT_1(\mu_i^2) - p\|_1 \recht 0 \quad\text{and}\quad \|y \mu_i - \mu_i y \|_2 \recht 0 \;\;\text{for all}\;\; y \in P \; ,$$
and similarly for $(\xi_j)$.

Consider the $M$-$M$-bimodule
$$\cH := \rL^2(\langle M,e_{Q_1} \rangle) \ot_M \rL^2(\langle M,e_{Q_2} \rangle) \; .$$
We will prove below that $\cH$ admits a net of vectors $\eta_k \in p\cH p$ such that
\begin{equation}\label{eq.aimH}
\|y \eta_k - \eta_k y \| \recht 0 \quad\text{for all}\;\; y \in P \quad\text{and}\quad \langle x \eta_k,\eta_k \rangle \recht \tau(x) \quad\text{for all}\;\; x \in pMp \; .
\end{equation}
Note that for every $\mu \in \rL^2(\langle M,e_{Q_1} \rangle)$ and every $j$, the vector $\mu \ot_M \xi_j \in \cH$ is well defined and satisfies
\begin{equation}\label{eq.form1}
\|\mu \ot_M \xi_j \| = \langle \mu \cT_2(\xi_j^2) , \mu \rangle^{1/2} \leq \|\mu\|_2 \; .
\end{equation}
Similarly, for every $\xi \in \rL^2(\langle M,e_{Q_2} \rangle)$ and every $i$, the vector $\mu_i \ot_M \xi$ is well defined and satisfies
\begin{equation}\label{eq.form2}
\|\mu_i \ot_M \xi\| \leq \|\xi\|_2 \; .
\end{equation}
Fix finite subsets $\cF \subset P$, $\cG \subset pMp$ and fix $\eps > 0$. We will produce a vector $\eta \in p\cH p$ such that
\begin{align}
& \|y \eta - \eta y \| \leq 2 \eps \quad\text{for all}\;\; y \in  \cF \; ,\label{eq.star1}\\
& |\langle x \eta,\eta \rangle - \tau(x) | \leq 2 \eps \quad\text{for all}\;\; x \in \cG \; .\label{eq.star2}
\end{align}
Once these two statements are proven, we find a net $(\eta_k)$ in $\cH$ satisfying conditions \eqref{eq.aimH}.

First fix $i$ such that $\|y \mu_i - \mu_i y \|_2 \leq \eps$ for all $y \in \cF$ and $|\langle x \mu_i , \mu_i \rangle - \tau(x)| \leq \eps$ for all $x \in \cG$.

Since $0 \leq \cT_1(\mu_i^2) \leq p$, it follows that for every $x \in M$, the element $\cT_1(\mu_i x \mu_i) \in \rL^1(M)$ is bounded in the uniform norm and hence belongs to $pMp$. Put $\cG' := \{\cT_1(\mu_i x \mu_i) \mid x \in \cG\}$. Then fix $j$ such that $\|y \xi_j - \xi_j y \|_2 \leq \eps$ for all $y \in \cF$ and $|\langle x \xi_j , \xi_j \rangle - \tau(x)| \leq \eps$ for all $x \in \cG'$.

Put $\eta := \mu_i \ot_M \xi_j$. Note that $\eta \in p\cH p$. We now prove that $\eta$ satisfies \eqref{eq.star1} and \eqref{eq.star2}. Take $y \in \cF$. Since $\|y \mu_i - \mu_i y\|_2 \leq \eps$, it follows from \eqref{eq.form1} that $\|y \eta - \mu_i y \ot_M \xi_j\| \leq \eps$. Note that $\mu_i y \ot_M \xi_j = \mu_i \ot_M y \xi_j$. Since $\|y \xi_j - \xi_j y\|_2 \leq \eps$, it follows from \eqref{eq.form2} that $\| \mu_i \ot_M y \xi_j - \eta y \| \leq \eps$. So \eqref{eq.star1} holds.

To prove \eqref{eq.star2} take $x \in \cG$. Note that
$$\langle x \eta, \eta \rangle = \langle x\mu_i \ot_M \xi_j , \mu_i \ot_M \xi_j \rangle = \langle \cT_1(\mu_i x \mu_i) \xi_j , \xi_j \rangle \; .$$
Since $\cT_1(\mu_i x \mu_i) \in \cG'$ it follows from our choice of $j$ that
$$|\langle x \eta,\eta \rangle - \tau(\cT_1(\mu_i x \mu_i))| \leq \eps \; .$$
But $\tau(\cT_1(\mu_i x \mu_i)) = \Tr(\mu_i x \mu_i) = \langle x \mu_i, \mu_i \rangle$ and also $|\langle x \mu_i, \mu_i \rangle - \tau(x) | \leq \eps$. Hence also \eqref{eq.star2} follows.

So we have proven the existence of a net $(\eta_k)$ in $p\cH p$ satisfying the conditions \eqref{eq.aimH}. It follows that the bimodule $\bim{pMp}{\rL^2(pMp)}{P}$ is weakly contained in the bimodule $\bim{pMp}{(p \cH p)}{P}$.

We claim that the $M$-$M$-bimodule $\cH$ is contained in a multiple of $\bim{M}{\rL^2(\langle M,e_{Q} \rangle)}{M}$ with $Q = Q_1 \cap Q_2$. Whenever $u,v \in \cN_M(Q_1)$, denote by $\cH_{u,v} \subset \cH$ the closed linear span of the vectors $\{ x e_{Q_1} u \ot_M v e_{Q_2} y \mid x,y \in M\}$. Note that $\cH_{u,v}$ is an $M$-$M$-subbimodule of $\cH$. The commuting square condition together with the formula $\Ad (uv)^* \circ E_{Q_1} = E_{Q_1} \circ \Ad (uv)^*$ guarantees that the formula
$$x e_{Q_1} u \ot_M v e_{Q_2} y \mapsto x u v \ot_Q y$$
defines an $M$-$M$-bimodular unitary of $\cH_{u,v}$ onto $\rL^2(\langle M,e_Q \rangle )$. Since $Q_1$ is regular in $M$, the subbimodules $\{\cH_{u,v}  \mid u,v \in \cN_M(Q_1) \}$ span a dense subspace of $\cH$. It then follows that $\cH$ is indeed contained in a multiple of $\rL^2(\langle M,e_Q \rangle )$ and the claim is proven.

Using the claim it follows that the bimodule $\bim{pMp}{\rL^2(pMp)}{P}$ is weakly contained in the bimodule
$$\bim{pMp}{(p \rL^2(\langle M,e_{Q} \rangle) p)}{P} = \bim{pMp}{(p \rL^2(M) \ot_Q \rL^2(M) p)}{P} \; .$$
By condition~3 in Proposition \ref{prop.char} this means that $P$ is amenable relative to $Q$.
\end{proof}

We finally prove the following easy lemma. Its proof is almost identical to the proof of \cite[Lemma 3.6]{OP07}.

\begin{lemma}\label{lem.rel-amen-crossed}
Assume that $(M,\tau)$ is a tracial von Neumann algebra with von Neumann subalgebra $A \subset M$. Let $\Lambda < \cN_M(A)$ be a countable subgroup. Assume that $\Lambda$ is amenable. Then $(A \cup \Lambda)\dpr$ is amenable relative to $A$.
\end{lemma}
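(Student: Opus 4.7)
The idea is the classical averaging construction: use an invariant mean on $\Lambda$ to average the canonical $A$-central vector state on $\langle M,e_A\rangle$ and then verify all the requirements of Definition \ref{def.rel-amen}.

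More precisely, first I would observe that the vector $\hat 1 \in \rL^2(M)$ yields a state $\om_0$ on $\langle M, e_A\rangle = \B(\rL^2(M)) \cap (A\op)'$ defined by $\om_0(x) = \langle x \hat 1, \hat 1 \rangle$, which already has two key properties: it restricts to $\tau$ on $M$, and it is $A$-central, the latter because elements of $\langle M,e_A\rangle$ commute with the right $A$-action on $\rL^2(M)$ and $\hat 1 \cdot a = a \cdot \hat 1$ for all $a \in A$. Since $\Lambda$ is amenable, it admits a two-sided invariant mean $m$ on $\ell^\infty(\Lambda)$. I would then define
\[
\Om : \langle M, e_A \rangle \recht \C : \Om(x) := m_\lambda\bigl( \langle u_\lambda x u_\lambda^* \hat 1, \hat 1 \rangle \bigr),
\]
where $u_\lambda$ is the unitary implementing $\lambda \in \Lambda < \cN_M(A)$.

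Next I would verify the three properties that make $\Om$ a witness for the relative amenability of $(A \cup \Lambda)\dpr$ over $A$. That $\Om$ is a state restricting to $\tau$ on $M$ is immediate: for $y \in M$ the integrand is the constant $\tau(u_\lambda y u_\lambda^*) = \tau(y)$. For $A$-centrality, given $a \in A$ and $x \in \langle M,e_A\rangle$, I would rewrite $u_\lambda a x u_\lambda^* = (u_\lambda a u_\lambda^*)(u_\lambda x u_\lambda^*)$; since $u_\lambda a u_\lambda^* \in A$ by the normalizing hypothesis, the $A$-centrality of $\om_0$ shifts this factor to the right, yielding $\Om(ax) = \Om(xa)$. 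For $\Lambda$-centrality, given $\mu \in \Lambda$ the substitution $\lambda \mapsto \lambda \mu^{-1}$ turns $\langle u_\lambda u_\mu x u_\lambda^* \hat 1, \hat 1 \rangle$ into $\langle u_\lambda x u_\mu u_\lambda^* \hat 1, \hat 1 \rangle$, so right-invariance of $m$ gives $\Om(u_\mu x) = \Om(x u_\mu)$.

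Finally, I would upgrade the centrality of $\Om$ with respect to the algebraic generators $A \cup \Lambda$ to centrality with respect to the full von Neumann algebra $P := (A \cup \Lambda)\dpr$. Linearity extends centrality to the $*$-algebra $P_0$ spanned by $A$ and $\{u_\lambda : \lambda \in \Lambda\}$; this is enough to conclude via Day's trick, exactly as in the implication $1\Rightarrow 2$ of Proposition \ref{prop.char}, producing a net $\xi_n \in \rL^2(\langle M,e_A\rangle,\Tr)^+$ with $\cT(\xi_n^2) \recht 1$ in $\|\cdot\|_1$ and $\|y\xi_n - \xi_n y\|_2 \recht 0$ for every $y \in P_0$. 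By Kaplansky density the almost-commutation passes to all of $P$, so condition~2 of Proposition \ref{prop.char} is satisfied and $P$ is amenable relative to $A$.

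There is no serious obstacle here; the only point requiring a little care is the last passage from algebraic centrality to centrality over $P$, which is handled by the standard Day/Namioka convex-combination argument already implicit in Proposition \ref{prop.char}.
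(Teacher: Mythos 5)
Your construction is the same averaging argument as in the paper: you produce an $A$-central, $\Lambda$-invariant state on $\langle M,e_A\rangle$ restricting to $\tau$ on $M$ (you do it by integrating the canonical vector state against an invariant mean, the paper does it by a fixed-point argument on the weak$^*$-compact convex set of such states — these are the same idea). The only place where you take a longer route is the final upgrade from centrality over $A\cup\Lambda$ to centrality over $(A\cup\Lambda)\dpr$: rather than passing through the $\rL^1$-net of Proposition~\ref{prop.char} and then a Kaplansky argument (which, note, cannot be "exactly" the implication $1\Rightarrow 2$ since that implication presupposes $P$-centrality — you would have to rerun it for the $*$-algebra $P_0$), the paper simply observes via Cauchy–Schwarz that $\|x\cdot\Om-y\cdot\Om\|\le\|x-y\|_2$ and $\|\Om\cdot x-\Om\cdot y\|\le\|x-y\|_2$, so the state $\Om$ itself is already central for the $\|\cdot\|_2$-closure of $\lspan\{a g : a\in A,\ g\in\Lambda\}$, which is all of $(A\cup\Lambda)\dpr$.
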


Note that the von Neumann algebra $(A \cup \Lambda)\dpr$ need not be a crossed product $A \rtimes \Lambda$. In the extreme (and uninteresting) case we might even have that $\Lambda \subset \cU(A)$.

\begin{proof}
Define
$$K:=\{ \Om \in \langle M,e_A \rangle^* \mid \Om \;\;\text{is an $A$-central state satisfying $\Om_{|M} = \tau$}\;\} \; .$$
Equipped with the weak$^*$ topology, $K$ is compact and convex. Also $K$ is nonempty since the state on $\langle M,e_A \rangle \subset \B(\rL^2(M))$ implemented by the vector $1 \in \rL^2(M)$, belongs to $K$.

The formula $\al_g(\Om) = g \cdot \Om \cdot g^*$ defines an action of $\Lambda$ on $K$ by weak$^*$ homeomorphisms. Since $\Lambda$ is amenable, this action has a fixed point $\Om \in K$. So $\Om$ is a state on $\langle M,e_A \rangle$ that is $x$-central for all $x \in \lspan \{a g \mid a \in A, g \in \Lambda\}$ and that satisfies $\Om_{|M} = \tau$. It remains to prove that $\Om$ is $(A \cup \Lambda)\dpr$-central. This follows immediately since $\lspan \{a g \mid a \in A, g \in \Lambda\}$ is $\|\,\cdot\,\|_2$-dense in $(A \cup \Lambda)\dpr$ and since the Cauchy-Schwarz inequality implies that for all $x,y \in M$ we have
$$\| x \cdot \Om - y \cdot \Om \| \leq \Om((x-y)^* (x-y))^{1/2} = \|x-y\|_2$$
and similarly $\|\Om \cdot x - \Om \cdot y\| \leq \|x-y\|_2$.
\end{proof}

\subsection{A lemma on non-normal states}

The following lemma is distilled from \cite[Corollary 2.3]{OP07} and \cite[Lemma 5]{Oz10}, with a very similar proof but a more generic formulation of the result.

\begin{lemma}\label{lem.make-Om}
Let $\cN$ be a von Neumann algebra and $M \subset \cN$ a von Neumann subalgebra. Let $\cG_1 \subset \cG_2 \subset \cU(\cN)$ be subgroups such that all $u \in \cG_2$ normalize $M$. Assume that $\tau$ is a faithful normal tracial state on $M$ that is $(\Ad u)_{u \in \cG_2}$-invariant.

Assume that for every nonzero $(\Ad u)_{u \in \cG_2}$-invariant projection $p \in M$, there exists a (typically non-normal) positive functional $\Psi$ on $\cN$ satisfying the following three properties.
\begin{enumerate}
\item $\Psi(vp) = \Psi(p)$ for all $v \in \cG_1$,
\item $\Psi \circ \Ad u = \Psi$ for all $u \in \cG_2$,
\item Either $\Psi_{|pMp}$ is normal and nonzero; or $\Psi_{|pMp}$ is faithful in the sense that $\Psi(q) > 0$ for all nonzero projections $q \in pMp$.
\end{enumerate}
Then there exists a state $\Om$ on $\cN$ such that $\Om(v) = 1$ for all $v \in \cG_1$, $\Om \circ \Ad u = \Om$ for all $u \in \cG_2$ and $\Om(x) = \tau(x)$ for all $x \in M$.
\end{lemma}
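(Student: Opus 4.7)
The strategy is Zorn's lemma on a suitable cone of positive functionals on $\cN$, combined with a GNS-based construction to prevent the maximal element from falling short of $\tau$ on $M$. Let $\cZ$ be the cone of positive functionals $\omega$ on $\cN$ such that $\omega|_M$ is normal and dominated by $\tau$, such that $\omega \circ \Ad u = \omega$ for all $u \in \cG_2$, and such that $\omega(v) = \omega(1)$ for all $v \in \cG_1$, ordered by $\omega \preceq \omega'$ iff $\omega' - \omega \in \cZ$. Chains are norm-bounded by $\tau(1) = 1$ and each defining condition passes to weak$^*$ cluster points, so Zorn yields a maximal element $\omega_0$. If $\omega_0|_M = \tau$, then $\omega_0$ is a state, the condition $\omega_0(v) = 1$ automatically upgrades to $\omega_0 \circ \Ad v = \omega_0$ for $v \in \cG_1$ by the standard Cauchy--Schwarz argument on states fixed by unitaries, and $\Om := \omega_0$ is the desired state.

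Suppose by contradiction $\omega_0|_M \ne \tau$. Then $\tau - \omega_0|_M = \tau(a\,\cdot)$ for some $a \in M^+ \setminus \{0\}$ with $0 \le a \le 1$, and $a$ is $\cG_2$-invariant since $\tau$ and $\omega_0|_M$ both are. Pick $\epsilon > 0$ small so that the spectral projection $p := \chi_{[\epsilon,1]}(a)$ is nonzero; then $p$ is a $\cG_2$-invariant projection in $M$ with $a \ge \epsilon p$. Apply the hypothesis to $p$ to obtain $\Psi$ satisfying (1)--(3). First observe that, for every $\cG_2$-invariant subprojection $q' \le p$ in $M$, condition~(1) upgrades to $\Psi(vq') = \Psi(q')$ for all $v \in \cG_1$: indeed $vq' = q'v$ since $q'$ is $\cG_2$-invariant, and the standard trick that $\Psi/\Psi(p)$ restricted to $p\cN p$ is a state fixed by the unitary $vp$ applies to the element $q' \in p\cN p$. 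A direct computation then yields $\|\pi_\Psi((v-1)q')\xi_\Psi\|^2 = 2\Psi(q') - \Psi(vq') - \Psi(v^*q') = 0$ in the GNS triple $(\cH_\Psi, \pi_\Psi, \xi_\Psi)$ of $\Psi$.

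Now let $U_u$, $u \in \cG_2$, be the unitaries implementing $\Ad u$ and fixing $\xi_\Psi$, and let $P \in \pi_\Psi(M)'$ be the central projection onto the subspace on which $\pi_\Psi|_M$ acts normally. Because every $\pi_\Psi(v)$ ($v \in \cG_2$) and every $U_u$ normalizes $\pi_\Psi(M)$ and the normal/singular decomposition is canonical, $P$ commutes with all $\pi_\Psi(v)$ and $U_u$. By uniqueness of the normal/singular decomposition, together with the fact that the restriction to $pMp$ of the singular part $(\Psi|_M)_s$ remains singular on $pMp$, the restriction to $pMp$ of $\nu := (\Psi|_M)_n$ equals $(\Psi|_{pMp})_n$; this is nonzero in both subcases of (3)---in Case~A because $\Psi|_{pMp}$ is itself nonzero normal, and in Case~B because a singular positive functional cannot be faithful on a von Neumann algebra, since it vanishes on some nonzero subprojection of every nonzero projection. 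Let $\tilde a \in (pMp)^+ \setminus \{0\}$ be the Radon--Nikodym derivative of $\nu|_{pMp}$; it is $\cG_2$-invariant. Choose $\delta, N > 0$ with $q' := \chi_{[\delta,N]}(\tilde a) \ne 0$, a $\cG_2$-invariant subprojection of $p$, and set $\eta := P\pi_\Psi(q')\xi_\Psi$ and $\omega'(x) := \langle \pi_\Psi(x)\eta, \eta\rangle$. One checks that $U_u\eta = \eta$ and $\pi_\Psi(v)\eta = \eta$ (using the vanishing identity above together with $[P,\pi_\Psi(v)] = [P,U_u] = 0$), so $\omega'$ inherits both invariance properties, and that $\omega'|_M(x) = \tau(q'\tilde a q' \, x)$ is a nonzero normal functional on $M$ bounded above by $N\tau(q'\,\cdot) \le (N/\epsilon)\tau(a\,\cdot)$. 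Scaling by $\lambda := \epsilon/N$ then produces $\omega_0 + \lambda\omega' \in \cZ$ strictly larger than $\omega_0$, contradicting maximality. The main obstacle is Case~B: one cannot simply replace $\Psi$ by its normal part in $\cN^*$, since $\cG_1$-left invariance may fail to survive, nor can $\nu|_{pMp}$ be directly extended to a normal functional on $\cN$ with the required invariances; the GNS construction resolves this by performing the ``normal cutoff'' on the Hilbert-space side via the projection $P$, where the vector $\eta$ automatically inherits both invariance properties.
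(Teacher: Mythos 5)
Your proof is correct and takes a genuinely different route from the paper, so let me compare the two. Both arguments hinge on the same technical ingredient: in case~B one cannot simply replace $\Psi$ by its normal part in $\cN^*$ (the $\cG_1$-invariance $\Psi(vp)=\Psi(p)$ does not obviously survive), and instead one performs a ``normal cutoff'' by the central projection $z\in M^{**}$ coming from the normal/singular decomposition of $\pi_\Psi|_M$. The paper does this by passing to the bidual inclusion $\theta:M^{**}\to\cN^{**}$ and compressing $\Psi$ by $\theta(z)$; you do it by working in the GNS representation $(\cH_\Psi,\pi_\Psi,\xi_\Psi)$ of $\Psi$ on $\cN$ and compressing the GNS vector by $P=\hat\pi(z)\in\cZ(\pi_\Psi(M)'')$. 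These are the same device. Where you diverge is in the overall architecture. The paper first upgrades the hypothesis to produce, for each $p$, a subcorner $p_0 \leq p$ and a positive $\Psi_0$ on $p_0\cN p_0$ whose restriction to $p_0 M p_0$ is normal \emph{and faithful} (this requires showing that the normal part of a faithful functional stays faithful); it then uses Zorn's lemma to build a maximal orthogonal family $(p_n,\Psi_n)$ that must be exhaustive, sums them up with weights to obtain $\Phi$ with $\Phi|_M$ a faithful normal state, and finally corrects to $\tau$ by a Radon--Nikodym sandwich $\Phi((h+1/k)^{-1/2}\,\cdot\,(h+1/k)^{-1/2})$. You instead run Zorn directly on the cone $\cZ\subset\cN^*_+$ of invariant functionals with normal restriction $\le\tau$, take a maximal element $\omega_0$, and argue by contradiction that $\omega_0|_M=\tau$: the contradiction step uses the hypothesis once, on a spectral projection of the ``deficiency'' $a$, and only needs a \emph{nonzero} normal piece $\nu|_{pMp}$ (not a faithful one). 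This avoids both the faithfulness-of-the-normal-part lemma and the final sandwich; the price is a slightly more delicate maximality argument and the small technical step of cutting $\tilde a$ to a band $\chi_{[\delta,N]}(\tilde a)$ to obtain the scaling constant $\lambda=\eps/N$. Both work; yours is a clean alternative.

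Two small points to tighten. First, you write that ``each defining condition passes to weak$^*$ cluster points''; normality of the restriction to $M$ does not pass to arbitrary weak$^*$ limits, and for the Zorn step you should invoke that an increasing $\preceq$-chain gives an increasing Radon--Nikodym net $h_i\in M$ with $0\le h_i\le 1$, whose $\sigma$-strong limit $h\in M$ then witnesses the normality of $\omega|_M=\tau(h\,\cdot)$ for any weak$^*$ cluster point $\omega$. Second, the Radon--Nikodym derivative $\tilde a$ of $\nu|_{pMp}$ with respect to $\tau$ lives a priori in $\rL^1(pMp,\tau)^+$ (affiliated with $pMp$), not in $(pMp)^+$; this does not affect anything since you immediately pass to the bounded spectral cutoff $q'\tilde a q'\in pMp$, but the claim as written is slightly imprecise.
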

\begin{proof}
We first claim that for every nonzero $(\Ad u)_{u \in \cG_2}$-invariant projection $p \in M$, there exists a nonzero $(\Ad u)_{u \in \cG_2}$-invariant projection $p_0 \in pMp$ and a positive functional $\Psi_0$ on $p_0 \cN p_0$ such that
\begin{itemize}
\item $\Psi_0(vp_0) = \Psi_0(p_0)$ for all $v \in \cG_1$,
\item $\Psi_0 \circ \Ad u = \Psi_0$ for all $u \in \cG_2$,
\item The restriction of $\Psi_0$ to $p_0 M p_0$ is normal and faithful.
\end{itemize}

Given a nonzero $(\Ad u)_{u \in \cG_2}$-invariant projection $p \in M$, take a positive functional $\Psi$ on $\cN$ satisfying properties 1, 2 and 3 in the formulation of the lemma. First assume that $\Psi_{|pMp}$ is normal and nonzero. Since $\Psi_{|pMp}$ is $(\Ad u)_{u \in \cG_2}$-invariant, the support of the nonzero normal positive functional $\Psi_{|pMp}$ also is $(\Ad u)_{u \in \cG_2}$-invariant. We denote this support by $p_0$ and define $\Psi_0(S) := \Psi(p_0 S p_0)$. Note that $p_0$ is a nonzero projection in $pMp$ and that $\Psi(p-p_0) = 0$. Hence the Cauchy Schwarz inequality implies that $\Psi(v(p-p_0)) = 0$ for all $v \in \cG_1$. We conclude that $\Psi_0(v p_0) = \Psi_0(p_0)$ for all $v \in \cG_1$. The other conditions are obvious and we have shown the claim in the case where $\Psi_{|pMp}$ is normal and nonzero.

Next assume that $\Psi_{|pMp}$ is faithful. Replacing $\Psi$ by $\Psi(p \cdot p)$, properties 1 and 2 in the formulation of the lemma remain valid and $\Psi(S) = \Psi(Sp) = \Psi(p S)$ for all $S \in \cN$. Still $\Psi_{|pMp}$ is faithful. We prove now that the claim holds with $p_0 = p$.

We consider the bidual von Neumann algebras $M^{**}$ and $\cN^{**}$. We view $M$, resp.\ $\cN$, as weakly dense C$^*$-subalgebras of $M^{**}$, resp.\ $\cN^{**}$. We denote by $\theta : M^{**} \recht \cN^{**}$ the bidual of the inclusion $M \subset N$. Then $\theta$ is the unique normal $*$-homomorphism satisfying $\theta(x) = x$ for all $x \in M$. We denote by $\pi : M^{**} \recht M$ the unique normal $*$-homomorphism satisfying $\pi(x) = x$ for all $x \in M$. Define the central projection $z \in M^{**}$ as the support projection of $\pi$. Recall from \cite[Definition III.2.15]{Ta79} that for all $\om \in M^*$ we have that $\om = \om \cdot z + \om \cdot (1-z)$ corresponds to the unique decomposition of $\om$ as a sum of a normal and a singular functional on $M$.

Whenever $\al \in \Aut(M)$, we denote by $\al^{**}$ the bidual automorphism of $M^{**}$. Since $\al \circ \pi = \pi \circ \al^{**}$, it follows that $\al^{**}(z) = z$ for all $\al \in \Aut(M)$. For every $u \in \cG_2$, we define $\al_u \in \Aut(M)$ given by $\al_u(x) = u x u^*$ for all $x \in M$. Note that $u \theta(x) u^* = \theta(\al_u(x))$ for all $u \in \cG_2$ and all $x \in M$. Hence we get that $u \theta(x) u^* = \theta(\al_u^{**}(x))$ for all $x \in M^{**}$. It follows in particular that $u \theta(z) u^* = \theta(z)$ for all $u \in \cG_2$.

Define the positive functional $\Psi_0$ on $p \cN p$ by the formula $\Psi_0(S) = \Psi(\theta(z) S \theta(z))$. Note that the projection $\theta(z)$ commutes with $x = \theta(x)$ for all $x \in M$. So, since $\Psi(1-p) = 0$, also $\Psi_0(1-p) = 0$ and $\Psi_0(S) = \Psi_0(S p) = \Psi_0(pS)$ for all $S \in \cN$. As explained above, $\theta(z)$ also commutes with all $u \in \cG_2$. Since $\Psi \circ \Ad u = \Psi$ for all $u \in \cG_2$, also $\Psi_0 \circ \Ad u = \Psi_0$ for all $u \in \cG_2$.

Next take $v \in \cG_1$. Denote $d = 1 - (v+v^*)/2$. Note that $d$ is a positive element in $\cN$ and that $\Psi(d) = \Psi(d p) = 0$. Since $\theta(z)$ commutes with $v$, we also have that $\theta(z)$ commutes with $d$. Therefore, using the Cauchy Schwarz inequality
$$\Psi_0(d)^2 = |\Psi(\theta(z) d \theta(z))|^2 = |\Psi(\theta(z) d)|^2 \leq \Psi(\theta(z) d^{1/2} \theta(z)) \, \Psi(d) = 0 \; .$$
We conclude that $\Psi_0(v p) = \Psi_0(v) = \Psi_0(1) = \Psi_0(p)$ for all $v \in \cG_1$.

Denote by $\om$ the restriction of $\Psi$ to $p M p$. Denote by $\om = \om\nor+\om\sing$ the unique decomposition of $\om$ as the sum of a normal and a singular functional. As observed above the restriction of $\Psi_0$ to $pMp$ equals $\om\nor$. We know that $\om$ is faithful on $pMp$. It remains to show that $\om\nor$ is still faithful. Assume that $q \in pMp$ is a projection and that $\om\nor(q) = 0$. We have to prove that $q = 0$.
By \cite[Theorem III.3.8]{Ta79} we can take an increasing sequence of projections $p_k \in M$ such that $p_k \recht 1$ strongly and $\om\sing(p_k) = 0$ for all $k$.
Consider the projections $q \wedge p_k$ and note that $q \wedge p_k \recht q$ strongly. Indeed, since the projection $q - q \wedge p_k$ is equivalent with the projection $q \vee p_k - p_k$, we have
$$\tau(q-q \wedge p_k) = \tau(q \vee p_k) - \tau(p_k) \leq 1 - \tau(p_k) \recht 0 \; .$$
Since $q \wedge p_k \leq q$ and $\om\nor(q) = 0$, we have $\om\nor(q \wedge p_k) = 0$ for all $k$. Since $q \wedge p_k \leq p_k$ and $\om\sing(p_k) = 0$, we have $\om\sing(q \wedge p_k) = 0$ for all $k$. Hence, $\om(q \wedge p_k) = 0$ for all $k$. Since $\om$ is faithful on $p M p$, we conclude that $q \wedge p_k = 0$ for all $k$. Since $q \wedge p_k \recht q$ strongly, also $q = 0$. So we established the claim in the beginning of the proof.

Using Zorn's lemma take a maximal sequence $(p_n,\Psi_n)_{n \in \N}$ where the $p_n$ are mutually orthogonal $(\Ad u)_{u \in \cG_2}$-invariant nonzero projections in $M$ and the $\Psi_n$ are positive functionals on $p_n \cN p_n$ such that $\Psi_n(v p_n) = \Psi_n(p_n)$ for all $v \in \cG_1$, such that $\Psi_n \circ \Ad u = \Psi_n$ for all $u \in \cG_2$ and such that the restriction of $\Psi_n$ to $p_n M p_n$ is a faithful normal positive functional $\om_n$.

By the claim in the beginning of the proof and by the maximality of the family $(p_n,\Psi_n)$, it follows that $\sum_n p_n = 1$. Define the normal faithful $(\Ad u)_{u \in \cG_2}$-invariant state $\om$ on $M$ given by
$$\om(x) = \sum_{k=1}^\infty \frac{\tau(p_k)}{\om_k(p_k)} \om_k(p_k x p_k) \; .$$
Define the sequence of positive functionals $\Phi_n$ on $\cN$ given by the formula
$$\Phi_n(S) := \sum_{k=1}^{n} \frac{\tau(p_k)}{\Psi_k(p_k)} \Psi_k(p_k S p_k) \; .$$
Choose a state $\Phi$ on $\cN$ as a weak$^*$ limit point of the sequence $(\Phi_n)$. By construction we have that $\Phi(v) = \Phi(1)$ for all $v \in \cG_1$, that $\Phi \circ \Ad u = \Phi$ for all $u \in \cG_2$ and that $\Phi_{|M} = \om$.

Take $h \in \rL^1(M)^+$ such that $\om(x) = \tau(xh)$ for all $x \in M$. Note that the kernel of $h$ is trivial because $\om$ is a faithful normal state on $M$. Since both $\om$ and $\tau$ are $(\Ad u)_{u \in \cG_2}$-invariant, it follows that $h$ is $(\Ad u)_{u \in \cG_2}$-invariant.
Define $\Om \in \cN^*$ as any weak$^*$ limit point of the sequence of positive functionals
$$S \mapsto \Phi\bigl( (h+1/k)^{-1/2} S (h+1/k)^{-1/2} \bigr) \; .$$
By construction $\Om(x) = \tau(x)$ for all $x \in M$. Since both $\Phi$ and $(h+1/k)^{-1/2}$ are $(\Ad u)_{u \in \cG_2}$-invariant, also $\Om \circ \Ad u = \Om$ for all $u \in \cG_2$. Finally, take $v \in \cG_1$ and put $d := 1 - (v+v^*)/2$. Since $\cG_1 \subset \cG_2$, we see that $d$ commutes with $(h+1/k)^{-1/2}$ for all $k$. Using the Cauchy Schwarz inequality we get for every $k$ that
$$\Phi\bigl( (h+1/k)^{-1/2} d (h+1/k)^{-1/2} \bigr)^2 = \bigl|\Phi\bigl( (h+1/k)^{-1} d\bigl)\bigr|^2 \leq \Phi\bigl( (h+1/k)^{-1} d (h+1/k)^{-1} \bigr) \, \Phi(d) = 0 \; .$$
So also $\Om(d) = 0$ and hence $\Om(v) = 1$ for all $v \in \cG_1$.
\end{proof}

\section{Formulation of the key technical theorem}\label{sec.source}

If $c : \Gamma \recht K_\R$ is a $1$-cocycle into the orthogonal representation $\eta : \Gamma \recht \cO(K_\R)$, the function $g \mapsto \|c(g)\|^2$ is conditionally of negative type. By Schoenberg's theorem the formula
$$\psi_t : \Gamma \recht \R : \psi_t(g) := \exp(-t \|c(g)\|^2)$$
defines a $1$-parameter family $(\psi_t)_{t > 0}$ of functions of positive type on $\Gamma$.

Assume that $M = B \rtimes \Gamma$ is a crossed product of $\Gamma$ by a trace preserving action $\Gamma \actson (B,\tau)$. Associated with the $1$-cocycle $c : \Gamma \recht K_\R$, we get a $1$-parameter group $(\psi_t)_{t > 0}$ of unital completely positive normal trace preserving maps
\begin{equation}\label{eq.psit}
\psi_t : M \recht M : \psi_t(b u_g) = \exp(-t \|c(g)\|^2) \, b u_g \quad\text{for all}\;\; b \in B , g \in \Gamma \; .
\end{equation}

Recall from \eqref{eq.Krho} that we associated to every unitary representation $\eta : \Gamma \recht \cU(K)$ an $M$-$M$-bimodule $\cK^\eta$ defined by
\begin{equation}\label{eq.Krhobis}
\begin{split}
& \cK^\eta := K \ot \rL^2(M) \quad\text{and}\\
& (b u_g) \cdot (\xi \ot x) \cdot y = \eta_g\xi \ot b u_g x y \quad\text{for all}\;\; b \in B, g \in \Gamma, \xi \in K, x,y \in M \; .
\end{split}
\end{equation}
Whenever $K_\R$ is a real Hilbert space, we denote by $K$ its complexification. If $\eta : \Gamma \recht \cO(K_\R)$ is an orthogonal representation, we still denote by $\eta$ the corresponding unitary representation on $K$.

\begin{theorem}\label{thm.source}
Let $\Gamma$ be a weakly amenable group and $c: \Gamma \recht K_\R$ a $1$-cocycle into the orthogonal representation $\eta : \Gamma \recht \cO(K_\R)$.

Let $\Gamma \overset{\si}{\actson} (B,\tau)$ be any trace preserving action on a tracial von Neumann algebra $(B,\tau)$. Denote $M = B \rtimes \Gamma$. We consider the $M$-$M$-bimodule $\cK^\eta$ associated with the complexification of $\eta$ as in \eqref{eq.Krhobis}. We denote by $(\psi_t)_{t > 0}$ the $1$-parameter group of completely positive maps associated with $c : \Gamma \recht K_\R$ as in \eqref{eq.psit}.

Let $q \in M$ be a projection and $A \subset qMq$ any von Neumann subalgebra that is amenable relative to $B$. Denote by $P := \cN_{qMq}(A)\dpr$ its normalizer. Then at least one of the following statements holds.
\begin{itemize}
\item The $qMq$-$M$-bimodule $\bim{qMq}{(q \cK^\eta)}{M}$ is left $P$-amenable in the sense of Definition \ref{def.left-amenable};
\item or, there exist $t,\delta > 0$ such that $\|\psi_t(a)\|_2 \geq \delta$ for all $a \in \cU(A)$.
\end{itemize}
\end{theorem}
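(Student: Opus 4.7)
The plan is to execute the three-step strategy sketched in the introduction, adapted to the setting of arbitrary $q$, amenability of $A$ relative to $B$, and a general orthogonal $1$-cocycle $c$ (rather than just plain amenability of $A$ and a cocycle into the regular representation).

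First I would use the comultiplication trick to reduce to the case where $\Gamma$ acts trivially on $B$. Define $\Delta : M \recht M \ovt \rL(\Gamma)$ by $\Delta(bu_g) = bu_g \ot u_g$ and regard the target as the crossed product of the trivial action of $\Gamma$ on $M$. One verifies that $\Delta(A) \prec M \ot 1$ forces $A \prec_M B$, that left $\Delta(P)$-amenability of the corresponding $\cK^\eta$-style bimodule over $M \ovt \rL(\Gamma)$ implies left $P$-amenability of $\bim{qMq}{q\cK^\eta}{M}$, and that the second alternative $\|\psi_t(a)\|_2 \geq \delta$ transfers in both directions because $(\id \ot \tau) \circ \Delta = \id$. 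From now on $M = B \ovt \rL(\Gamma)$ with $\Gamma$ acting only in the $\rL(\Gamma)$-coordinate.

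The heart of the argument is the construction, via weak amenability, of a net of almost invariant vectors for the conjugation action of $\cN_{qMq}(A)$. Take finitely supported Herz-Schur multipliers $f_n$ on $\Gamma$ with $f_n \recht 1$ pointwise and $\limsup_n \|f_n\|\cb < \infty$, and let $\vphi_n := \id_B \ot m_{f_n}$. On the C$^*$-algebra $M \otmin (qPq)\op$ form the functionals $\mu_n(x \ot y\op) := \tau(\vphi_n(x) E_A(y))$; a Haagerup-type estimate gives $\|\mu_n\| \leq \|f_n\|\cb$, $\mu_n(q \ot q\op) \recht \tau(q)$, and $\|\mu_n \circ \Ad(u \ot \ubar) - \mu_n\| \recht 0$ for every $u \in \cN_{qMq}(A)$. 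The normalised absolute values $\om_n$ thus form an approximately $\Ad(u \ot \ubar)$-invariant net of states, and the key technical point is to build a von Neumann completion $\cN$ of $M \otmin (qPq)\op$ on which the $\om_n$ are normal and which factors as $\cN = N \ovt \rL(\Gamma)$ with the copy of $\rL(\Gamma) \subset M$ corresponding precisely to the second tensor factor. Standardly representing $N$ on $H$, the $\om_n$ are implemented by positive vectors $\xi_n \in H \ot \ell^2(\Gamma)$ inheriting the almost invariance $\|(u \ot \ubar)\xi_n - \xi_n(u \ot \ubar)\|\recht 0$ for all $u \in \cN_{qMq}(A)$.

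Finally I would apply Sinclair's $s$-malleable dilation of $(\psi_t)$ to an $\R$-flow $(\al_t)$ of automorphisms of an enlargement $\Mtil \supset M$ fixing $B$ pointwise and satisfying $\psi_{t^2/2} = E_M \circ \al_t$, and transport it to $\cN$ through the $\rL(\Gamma)$-coordinate. Two cases arise. If for some $t > 0$ one has $\limsup_n \|\al_t(\xi_n) - \xi_n\| < \sqrt{2}\,\limsup_n\|\xi_n\|$, then Popa's transversality together with $\|a - \al_t(a)\|_2 \geq \|a - \psi_{t^2/2}(a)\|_2$ yields a uniform bound $\|\psi_{t^2/2}(a)\|_2 \geq \delta$ for all $a \in \cU(A)$, the second alternative. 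Otherwise the renormalised difference vectors extracted from $\al_t(\xi_n) - \xi_n$ lie, in the weak limit, in the orthogonal complement of $\cN$ inside its dilation, which is identified as an $M$-$M$-bimodule with (a multiple of) $\cK^\eta$ via the first-order correspondence between the orthogonal representation $\eta$ and the malleable flow; these limit vectors remain almost $P$-central and restrict on $qMq$ to an approximation of $\tau$, so by condition~(2) of Proposition \ref{prop.char} they establish left $P$-amenability of $\bim{qMq}{q\cK^\eta}{M}$. The main obstacle is precisely this last identification: producing $\cK^\eta$ from the derivative of the deformation applied to vectors living in the abstract completion $\cN$, and ensuring that the approximately central states survive the weak limit as genuine $P$-central states on $(M\op)' \cap \B(\cK^\eta)$ whose restriction to $qMq$ equals $\tau$.
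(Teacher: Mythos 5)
Your outline has the right overall architecture (comultiplication reduction, almost-invariant states from finitely supported multipliers, Sinclair's dilation and a deformation dichotomy), but two steps as written would fail. First, Step~2 only works when $\Gamma$ has CMAP, not when $\Gamma$ is merely weakly amenable. You form $\mu_n$ with $\|\mu_n\| \leq \|f_n\|\cb$ and then set $\om_n := \|\mu_n\|^{-1}|\mu_n|$; but the key transfer $\|\mu_n - \om_n\| \recht 0$, which lets you carry the approximate $\Ad(u\ot\ubar)$-invariance from $\mu_n$ to the states $\om_n$, uses $\mu_n(1) \recht 1$ together with $\limsup_n\|\mu_n\| = 1$. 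When $\limsup_n\|f_n\|\cb > 1$ this collapses. To cover weakly amenable groups you need the argument parallel to \cite[Theorem~B]{Oz10}: introduce an ``adapted'' norm $\|\cdot\|_A$ on relative finite-rank completely bounded maps, pick an approximate identity realising the infimum $\kappa$ of $\limsup_i\|\psi_i\|_A$, and force the invariance of the associated functionals by the parallelogram law applied to the Stinespring vectors of a minimiser (Lemma~\ref{lem.adapted} and Proposition~\ref{prop.approx-ad-invariant} in the paper). With CMAP $\kappa = 1$ and your simpler argument recovers this; without it, it does not.

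Second, your dichotomy in Step~3 is too coarse. When the deformation moves the $\xi_n$, the weak$^*$ limit $\Om$ of $S \mapsto \langle \Psi(S)\, e^\perp V_t \xi_n, e^\perp V_t\xi_n\rangle$ is a $P$-central positive functional satisfying only $\Om(\Delta_\gamma(x)) \leq \tau(x)$ for $x \in M^+$; it need not restrict to $\tau$ on $M$ (the mass can escape), and after normalising it may even become singular. This is repaired by Lemma~\ref{lem.make-Om}, which however needs the ``moves'' estimate for \emph{every} nonzero central projection $p \in \cZ(P)$, not just for $q$. So the dichotomy must be formulated as in Section~\ref{sec.proofsource}: Case~1 is $\limsup_n\|e^\perp V_t \pi(p)\xi_n\| > \|p\|_2/8$ for all nonzero $p\in\cZ(P)$ and all $t>0$, and Case~2 is the negation, which after transversality on that corner yields $\|\psi_t(a)\|_2 \geq \|p\|_2/2$. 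Finally, the complement $\cHtil \ominus \cH$ carries the Koopman bimodule $\cK^\gamma$ for $\Gamma \actson D\ominus\C1$, not $\cK^\eta$ directly; one passes to $\cK^\eta$ using $\gamma \subset \eta\ot\zeta$ (with $\zeta$ a sum of tensor powers of $\eta$) together with Corollary~\ref{cor.weakcontainment} and condition~5 of Proposition~\ref{prop.char}. You flag this identification as the main obstacle, which is fair, but the route to it is through $\gamma$, not a first-order derivative of the flow.
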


\section{\boldmath Proof of Theorem \ref{thm.source}: reduction to $\Gamma$ acting trivially}

\begin{lemma}\label{lem.reduction}
It suffices to prove Theorem \ref{thm.source} for the trivial action $\Gamma \actson (B,\tau)$ on \emph{arbitrary} tracial von Neumann algebras $(B,\tau)$.
\end{lemma}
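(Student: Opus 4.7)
The plan is to use the \emph{comultiplication trick} sketched in the ``Comments on the proofs'' subsection. Define the trace-preserving normal unital $*$-homomorphism
\[
\Delta : M \recht \tilde M := M \ovt \rL(\Gamma), \qquad \Delta(bu_g) = b u_g \ot u_g \quad\text{for all}\;\; b \in B,\; g \in \Gamma,
\]
and view $\tilde M = M \ovt \rL(\Gamma)$ as the crossed product of the \emph{trivial} action of $\Gamma$ on the base algebra $\tilde B := M \ot 1$. Write $\tilde q := \Delta(q)$, $\tilde A := \Delta(A) \subset \tilde q \tilde M \tilde q$, and note that $\Delta(P) \subset \tilde P := \cN_{\tilde q \tilde M \tilde q}(\tilde A)\dpr$.

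First I would check that the hypothesis transfers. The key identity $E_{\tilde B} \circ \Delta = \Delta \circ E_B$ lifts $\Delta$ to a trace-preserving normal $*$-embedding of basic constructions $\langle M, e_B\rangle \hookrightarrow \langle \tilde M, e_{\tilde B}\rangle$ sending $q$ to $\tilde q$ and $M$ into $\tilde M$ via $\Delta$. Consequently, any net of witnesses in $\rL^2(q\langle M,e_B\rangle q, \Tr)^+$ for the relative amenability of $A$ over $B$ (in the sense of condition~(2) of Proposition~\ref{prop.char}) pushes forward to a corresponding net in $\rL^2(\tilde q \langle \tilde M, e_{\tilde B}\rangle \tilde q, \Tr)^+$ witnessing that $\tilde A$ is amenable relative to $\tilde B$. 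Theorem~\ref{thm.source} for the trivial action then applies to $\tilde A \subset \tilde q \tilde M \tilde q$ and returns one of
\begin{itemize}
\item[(i)] the $\tilde q \tilde M \tilde q$-$\tilde M$-bimodule $\tilde q \tilde \cK^\eta$ is left $\tilde P$-amenable;
\item[(ii)] there exist $t,\delta>0$ with $\|\tilde\psi_t(\Delta(a))\|_2 \geq \delta$ for all $a \in \cU(A)$.
\end{itemize}

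Alternative (ii) descends at once: a check on generators gives $\tilde\psi_t \circ \Delta = \Delta \circ \psi_t$, and since $\Delta$ is trace preserving, $\|\psi_t(a)\|_2 = \|\tilde\psi_t(\Delta(a))\|_2 \geq \delta$, which is the second alternative of Theorem~\ref{thm.source} for $A$.

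The main obstacle is transporting alternative (i) to the left $P$-amenability of the $qMq$-$M$-bimodule $q \cK^\eta$. I would use characterization~(3) of Proposition~\ref{prop.char}: alternative~(i) is equivalent to the weak containment of $\bim{\tilde q \tilde M \tilde q}{\rL^2(\tilde q \tilde M \tilde q)}{\tilde P}$ in $\tilde q \tilde \cK^\eta \ot_{\tilde M} \overline{\tilde q \tilde \cK^\eta}$. Restricting to the subalgebras $qMq$ on the left and $P$ on the right via $\Delta$ then produces a weak containment of $qMq$-$P$-bimodules, and the $\Delta$-induced isometric $qMq$-$P$-bimodular embedding $\rL^2(qMq) \hookrightarrow \rL^2(\tilde q \tilde M \tilde q)$ exhibits $\bim{qMq}{\rL^2(qMq)}{P}$ as a sub-bimodule of the restricted left-hand side. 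It remains to check that the $qMq$-$P$-restriction of $\tilde q \tilde \cK^\eta \ot_{\tilde M} \overline{\tilde q \tilde \cK^\eta}$ is weakly contained in $q \cK^\eta \ot_M \overline{q \cK^\eta}$. This is a concrete bimodular computation based on formula~\eqref{eq.Krhobis}: under the restriction via $\Delta$ the trivial-action bimodule $\tilde \cK^\eta$ splits as $\cK^\eta$ tensored with an extra ``$\ell^2(\Gamma)$-fibre'' that gets absorbed when one forms the Connes tensor product over $\tilde M$ against its conjugate. Reading condition~(3) of Proposition~\ref{prop.char} backwards then yields the left $P$-amenability of $q \cK^\eta$, completing the reduction.
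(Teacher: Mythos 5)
Your overall plan follows the paper's comultiplication strategy, and the step showing that $\Delta(A)$ is amenable relative to $M\ot 1$ is handled by a valid (if somewhat differently packaged) route: the paper does the same transfer by a concrete isometry and condition~(3) of Proposition~\ref{prop.char}, while you lift to the basic constructions and push $\rL^2$-witnesses forward. Both work.

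However, there is a genuine gap in the way you transport alternative~(i). You restrict via $\Delta$ and try to show that the $qMq$-$P$-restriction of $\tilde q\,\tilde\cK^\eta\ot_{\tilde M}\overline{\tilde q\,\tilde\cK^\eta}$ is \emph{weakly contained} in $q\cK^\eta\ot_M\overline{q\cK^\eta}$, claiming the extra $\ell^2(\Gamma)$-fibre ``gets absorbed''. It does not. After restriction, the left bimodule action of $M$ on $\tilde\cK^\eta$ via $\Delta$ identifies $\tilde\cK^\eta$ with $\cK^\eta\ot_M\rL^2(\tilde M)$, where $\rL^2(\tilde M)$ carries the $\Delta$-twisted $M$-module structure. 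Tensoring over $\tilde M$ against the conjugate collapses $\rL^2(\tilde M)\ot_{\tilde M}\rL^2(\tilde M)$ to $\rL^2(\tilde M)$, so the restricted bimodule is $q\cK^\eta\ot_M \rL^2_\Delta(\tilde M)\ot_M\overline{q\cK^\eta}$, with the $\Delta$-bi-twisted copy of $\rL^2(\tilde M)$ sitting in the middle. That middle $M$-$M$-bimodule is not $\rL^2(M)$: after a change of coordinates it is $\rL^2(M)\ot\ell^2(\Gamma)$ with the right $M$-action twisted by the conjugation representation of $\Gamma$ on $\ell^2(\Gamma)$; in particular it \emph{contains} $\rL^2(M)$ as a proper subbimodule and is not weakly contained in $\rL^2(M)$ for any nontrivial $\Gamma$. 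So the claimed weak containment is false, and reading condition~(3) backwards does not go through.

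The fix is exactly what the paper does: do not insist on $\cH=\overline\cK$. From the weak containment you \emph{do} have, namely $\bim{qMq}{\rL^2(qMq)}{P}\prec q\cK^\eta\ot_M\bigl(\rL^2_\Delta(\tilde M)\ot_M\overline{q\cK^\eta}\bigr)$, you should invoke condition~(4) (existence of \emph{some} $M$-$P$-bimodule $\cH$ with $\rL^2(qMq)\prec q\cK^\eta\ot_M\cH$) or, equivalently, condition~(5): $q\cK^\eta\ot_M\rL^2(\cM)$ is left $P$-amenable, hence so is $q\cK^\eta$. These conditions are what allows one to discard the nonamenable $\ell^2(\Gamma)$-fibre; condition~(3) with $\cH=\overline{\cK}$ fixed does not.
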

\begin{proof}
Assume that Theorem \ref{thm.source} holds for the trivial action of $\Gamma$ on an arbitrary tracial von Neumann algebra. Let then $\Gamma \actson (B,\tau)$ be an any trace preserving action. Denote $M = B \rtimes \Gamma$ and let $A \subset qMq$ be a von Neumann subalgebra that is amenable relative to $B$. Denote by $P := \cN_{qMq}(A)\dpr$ the normalizer of $A$ inside $qMq$.
As in the formulation of Theorem \ref{thm.source} we consider the $M$-$M$-bimodule $\cK^\eta$ on the Hilbert space $\cK^\eta = K \ot \rL^2(M)$, and we consider the $1$-parameter group $(\psi_t)_{t > 0}$ of completely positive maps on $M$, associated with the $1$-cocycle $c : \Gamma \recht K_\R$.

Put $\cM := M \ovt \rL(\Gamma)$ and view $\cM$ as the crossed product of $M$ with the trivial action of $\Gamma$. Define
$$\Delta : M \recht \cM : \Delta(b u_g) = bu_g \ot u_g \quad\text{for all}\quad b \in B, g \in \Gamma \; .$$
Define $\qtil := \Delta(q)$, $\cA := \Delta(A)$ and $\cP := \cN_{\qtil\cM\qtil}(\cA)\dpr$. Note that $\Delta(P) \subset \cP$.

We prove that $\cA$ is amenable relative to $M \ot 1$. Since $A$ is amenable relative to $B$, it follows from Proposition \ref{prop.char}.3 that the bimodule $\bim{qMq}{\rL^2(qMq)}{A}$ is weakly contained in the bimodule $\bim{qMq}{(q \rL^2(M) \ot_B \rL^2(M)q)}{A}$. We take on the left the Connes tensor product with the bimodule $\bim{\qtil\cM\qtil}{\rL^2(\qtil \cM \qtil)}{\Delta(qMq)}$, in which the right module action of $x \in qMq$ is given by the right multiplication with $\Delta(x)$. It follows that the bimodule $\bim{\qtil\cM\qtil}{\rL^2(\qtil \cM \qtil)}{\Delta(A)}$ is weakly contained in the bimodule
$$\bim{\qtil \cM \qtil}{\cL}{A} := \bigl(\bim{\qtil\cM\qtil}{\rL^2(\qtil \cM \qtil)}{\Delta(qMq)}\bigr) \; \ot_{qMq} \; \bigl(\bim{qMq}{(q \rL^2(M) \ot_B \rL^2(M)q)}{A}\bigr) \; .$$
The following direct computation shows that the map $S \ot_{qMq} (x \ot_B y) \mapsto S \Delta(x) \ot_{M \ot 1} \Delta(y)$ extends to a bimodular isometry of $\bim{\qtil \cM \qtil}{\cL}{A}$ into the bimodule $\bim{\qtil \cM \qtil}{(\qtil \rL^2(\cM) \ot_{M \ot 1} \rL^2(\cM) \qtil)}{\Delta(A)}$. Indeed, for all $S,T \in \qtil \cM \qtil$, $x,a \in q M$ and $y,b \in M q$ we have
\begin{align*}
\langle S \ot_{qMq} (x \ot_B y), T \ot_{qMq} (a \ot_B b) \rangle &= \tau\bigl( (b^* \ot 1) E_{B \ot 1}\bigl( \Delta(a^*) T^* S \Delta(x)\bigr) (y \ot 1) \bigr) \\
&= \tau\bigl( (E_B(yb^*) \ot 1) \; \Delta(a^*) T^* S \Delta(x) \bigr) \\
&= \tau\bigl( E_{M \ot 1}(\Delta(yb^*)) \; \Delta(a^*) T^* S \Delta(x) \bigr) \\
&= \tau\bigl( \Delta(yb^*) \; E_{M \ot 1}\bigl(\Delta(a^*) T^* S \Delta(x)\bigr) \bigr) \\
&= \langle S \Delta(x) \ot_{M \ot 1} \Delta(y), T \Delta(a) \ot_{M \ot 1} \Delta(b) \rangle \; .
\end{align*}
So the bimodule $\bim{\qtil \cM \qtil}{(\qtil \rL^2(\cM) \ot_{M \ot 1} \rL^2(\cM) \qtil)}{\Delta(A)}$ weakly contains $\bim{\qtil\cM\qtil}{\rL^2(\qtil \cM \qtil)}{\Delta(A)}$. Proposition \ref{prop.char}.3 then says that $\Delta(A)$ is amenable relative to $M \ot 1$.

For the trivial crossed product $\cM$, we also consider the $\cM$-$\cM$-bimodule $\cKtil^\eta$ on the Hilbert space $\cKtil^\eta = K \ot \rL^2(\cM)$, and the $1$-parameter group of completely positive maps $(\psitil_t)_{t > 0}$ on $\cM$. Since we assumed that Theorem \ref{thm.source} holds for the trivial action and since we have proven above that $\cA$ is amenable relative to $M \ot 1$, at least one of the following statements is true.
\begin{itemize}
\item The $\qtil\cM\qtil$-$\cM$-bimodule $\qtil\cKtil^\eta$ is left $\cP$-amenable;
\item or, there exist $t,\delta > 0$ such that $\|\psitil_t(a)\|_2 \geq \delta$ for all $a \in \cU(\cA)$.
\end{itemize}
We prove now that these options lead respectively to the left $P$-amenability of $\bim{qMq}{(q\cK^\eta)}{M}$, or the inequality $\|\psi_t(a)\|_2 \geq \delta$ for all $a \in \cU(\cA)$. Once this is proven, also the lemma is proven.

First assume that the $\qtil\cM\qtil$-$\cM$-bimodule $\qtil\cKtil^\eta$ is left $\cP$-amenable. View $\cKtil^\eta$ as an $M$-$\cM$-bimodule using the left module action by $\Delta(x)$, $x \in M$. So a fortiori $\bim{qMq}{(\qtil\cKtil^\eta)}{\cM}$ is left $P$-amenable. Viewing $\rL^2(\cM)$ as an $M$-$\cM$-bimodule by using also here the left module action by $\Delta(x)$, $x \in M$, we observe that $\bim{M}{\cKtil^\eta}{\cM}$ is canonically isomorphic with $\bim{M}{(\cK^\eta \ot_M \rL^2(\cM))}{\cM}$. We conclude that the bimodule $\bim{qMq}{(q \cK^\eta \ot_M \rL^2(\cM))}{\cM}$ is left $P$-amenable. By condition~5 in Proposition \ref{prop.char} we get that also $\bim{qMq}{(q \cK^\eta)}{M}$ is left $P$-amenable.

Since $\psitil_t \circ \Delta = \Delta \circ \psi_t$, the inequality $\|\psitil_t(a)\|_2 \geq \delta$ for all $a \in \cU(\cA)$ immediately implies that $\|\psi_t(a)\|_2 \geq \delta$ for all $a \in \cU(A)$.
\end{proof}

\section{\boldmath Weak amenability produces almost invariant states}

We prove the following theorem, which will be the first step towards the proof of Theorem \ref{thm.source}. We use the notation $\ubar := (u\op)^*$.

\begin{theorem}\label{thm.omn}
Let $\Gamma$ be a weakly amenable group and $(B,\tau)$ any tracial von Neumann algebra. Write $M := B \ovt \rL(\Gamma)$ and assume that $A \subset M$ is a von Neumann subalgebra that is amenable relative to $B$. Denote its normalizer by $P := \cN_M(A)\dpr$. Define $N$ as the von Neumann algebra generated by $B$ and $P\op$ on the Hilbert space $\rL^2(M) \ot_A \rL^2(P)$. Put $\cN := N \ovt \rL(\Gamma)$ and define the tautological embeddings
$$\pi : M \recht \cN : \pi(b \ot u_g) = b \ot u_g \quad\text{and}\quad \theta : P\op \recht \cN : \theta(y\op) = y\op \ot 1$$
for all $b \in B, g \in \Gamma, y \in P$.

Then there exists a net of normal states $\om_i \in \cN_*$ satisfying the following properties.
\begin{itemize}
\item $\om_i(\pi(x)) \recht \tau(x)$ for all $x \in M$,
\item $\om_i(\pi(a) \theta(\abar)) \recht 1$ for all $a \in \cU(A)$,
\item $\|\om_i \circ \Ad(\pi(u) \theta(\ubar)) - \om_i\| \recht 0$ for all $u \in \cN_M(A)$.
\end{itemize}
\end{theorem}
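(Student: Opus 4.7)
The plan is to realize the construction outlined in Step~2 of the comments on the proofs, adapted to the weakly amenable (not only CMAP) setting. Using the weak amenability of $\Gamma$, first fix a sequence of finitely supported Herz--Schur multipliers $f_n : \Gamma \recht \C$ with $f_n \recht 1$ pointwise and $C := \sup_n \|f_n\|\cb < \infty$. Let $\vphi_n := m_{f_n}$ and consider the amplifications $\Phi_n^M := \id_B \ovt \vphi_n : M \recht M$ and $\Phi_n^\cN := \id_N \ovt \vphi_n : \cN \recht \cN$, both of which are normal and completely bounded with the same cb norm as $\vphi_n$.

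Next I would build a canonical normal functional on $\cN$. The vector $\xi_0 := \hat 1_M \ot_A \hat 1_P$ is cyclic and separating for $N$ acting on $\rL^2(M) \ot_A \rL^2(P)$, and a direct computation shows that the associated vector state $\rho(S) := \langle S \xi_0, \xi_0\rangle$ satisfies $\rho(b \cdot y\op) = \tau(b \, E_A(y))$ for $b \in B$, $y \in P$. Define
\[
\mu_n := (\rho \ovt \tau_{\rL(\Gamma)}) \circ \Phi_n^\cN \in \cN_* .
\]
Then $\|\mu_n\| \leq C$, $\mu_n(1) = 1$, and on the $*$-subalgebra $\pi(M) \cdot \theta(P\op) \subset \cN$ one has $\mu_n(\pi(x) \theta(y\op)) = \tau(\Phi_n^M(x) E_A(y))$. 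The goal is then to verify the almost invariance $\|\mu_n \circ \Ad(\pi(u)\theta(\ubar)) - \mu_n\| \recht 0$ for every $u \in \cN_M(A)$. Using $E_A(u y u^{-1}) = u E_A(y) u^{-1}$, the difference functional reduces to $(x, y\op) \mapsto \tau(T_n^u(x) E_A(y))$ where $T_n^u := \Ad u^{-1} \circ \Phi_n^M \circ \Ad u - \Phi_n^M$, and one argues this tends to zero in the relevant norm by a combination of pointwise convergence $\Phi_n^M \recht \id$, the uniform cb bound, and approximation of $u$ by finitely supported Fourier expansions so that only a finite set of group elements is tested at each stage.

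Finally, the states $\om_n$ are extracted from $\mu_n$ by polar decomposition: set $\om_n := \||\mu_n|\|^{-1}|\mu_n|$, a normal state on $\cN$. The almost invariance of $\mu_n$ transfers to $|\mu_n|$ (and hence $\om_n$) by uniqueness of the polar decomposition of $\Ad$-invariant functionals. The convergences $\om_n \circ \pi \recht \tau$ and $\om_n(\pi(a)\theta(\abar)) \recht 1$ for $a \in \cU(A)$ follow from $\mu_n(1) = 1$ together with Cauchy--Schwarz in the GNS representation; in the weakly amenable case where $\|\mu_n\|$ need not tend to $1$, I would invoke Lemma~\ref{lem.make-Om} with $\cG_1 := \{\pi(a)\theta(\abar) : a \in \cU(A)\}$ and $\cG_2 := \{\pi(u)\theta(\ubar) : u \in \cN_M(A)\}$ to patch together positive functionals across $\Ad$-invariant projections of $M$, and a Hahn--Banach convex averaging to promote weak-$*$ to norm almost-invariance.

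The main obstacle is the norm convergence in the almost-invariance step. Since the $f_n$ are not conjugation-invariant, $\Phi_n^\cN$ does not commute with $\Ad(\pi(u)\theta(\ubar))$, and the naive estimate $\|T_n^u\|\cb \leq 2C$ is bounded but not small. What makes the argument work is the specific choice of $\cN = N \ovt \rL(\Gamma)$: the $\rL(\Gamma)$ factor is a clean tensor factor on which $\vphi_n$ acts, and the functional $(\rho \ovt \tau_{\rL(\Gamma)}) \circ \Phi_n^\cN$ only tests $\Phi_n$ against the cyclic vector $\xi_0 \ot \hat 1_{\rL(\Gamma)}$, so the non-commutation averages out in the limit once the vector is paired with the normalizer-invariant structure captured by $N$.
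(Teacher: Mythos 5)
Your construction of $\mu_n$ does not produce a functional with the required properties, and the almost-invariance step has a genuine, acknowledged hole. Let me take these in turn.

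First, the claimed formula $\mu_n(\pi(x)\theta(y\op)) = \tau(\Phi_n^M(x)E_A(y))$ is false. Inside $\cN = N \ovt \rL(\Gamma)$ one has $\pi(b\ot u_g)\theta(y\op) = (b y\op)\ot u_g$ (since $B$ and $P\op$ commute in $N$), so
\[
\mu_n\bigl(\pi(b\ot u_g)\theta(y\op)\bigr) \;=\; \rho(b y\op)\cdot \tau_{\rL(\Gamma)}\bigl(\m_{f_n}(u_g)\bigr) \;=\; f_n(e)\,\tau\bigl(b\,E_A(y)\bigr)\,\delta_{g,e}\;,
\]
which vanishes for $g\neq e$, whereas $\tau(\Phi_n^M(b\ot u_g)E_A(y)) = f_n(g)\,\tau\bigl((b\ot u_g)E_A(y)\bigr)$ is in general nonzero. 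In particular, for $a\in\cU(A)$ one computes $\mu_n(\pi(a)\theta(\abar)) = f_n(e)\,\|E_B(a)\|_2^2 \not\to 1$ whenever $A\not\subset B$, so the second required item fails outright. The vector $\xi_0\ot\widehat 1_{\rL(\Gamma)}$ only pairs against the $g=e$ Fourier slot of $\cN$, while the desired functional must retain the full group variable. This is precisely what the paper's conditional expectation $\cE:\cN\to\cS_A$ achieves via the isometry $V:\rL^2(M)\ot_B\rL^2(M)\to(\rL^2(M)\ot_A\rL^2(P))\ot\ell^2(\Gamma)$, $V((b\ot u_g)\ot_B x) = (bx\ot_A 1)\ot\delta_g$: the functional $\mu_n^A\circ\cE$ is a \emph{sum} over $g\in\supp f_n$ of pairings against $(1\ot u_g)\ot_B 1$ and $1\ot_B(1\ot u_g)$, not a single vector state, and this diagonal coupling is what makes the identity $\gamma_n(\pi(x)\theta(y\op))=\tau(\vphi_n(x)E_A(y))$ hold. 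Incidentally $\xi_0$ is also not cyclic for $N$ in general, but this is not the real defect.

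Second, the almost-invariance argument. You correctly identify that $\|T_n^u\|\cb\leq 2C$ is bounded but not small, and you propose to rescue the estimate by Fourier truncation of $u$ and a vague ``averaging out.'' This does not work: $u\in\cN_M(A)$ is only $\|\cdot\|_2$-approximable by finitely supported elements, while the functionals $\mu_n$ are only uniformly (not $\|\cdot\|_2$-) continuous in $u$, and the ``averaging'' claim is not an argument. What actually makes almost-invariance go through in the paper is an extremal-norm trick, and it is genuinely the heart of the proof. In the CMAP case: for a given $u\in\cN_M(A)$, one passes to $Q:=(A\cup\{u\})\dpr$, shows $Q$ is still amenable relative to $B$ (Lemma~\ref{lem.rel-amen-crossed} plus Corollary~\ref{cor.passage-rel-amen}), and uses the weak containment $\rL^2(M)_Q\prec\cK_Q$ to conclude $\limsup_n\|\mu_n^Q\|=1$. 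Since in addition $\mu_n^Q(1)\to 1$ and $\mu_n^Q(\lambda(u)\rho(\ubar))\to 1$, near-attainment of the norm at two unitaries forces $\|\mu_n^Q\circ\Ad(\lambda(u)\rho(\ubar))-\mu_n^Q\|\to 0$; restricting to $S_A$ gives the conclusion. For general weakly amenable $\Gamma$ (where $\limsup_n\|f_n\|\cb>1$) even this fails, and one must adapt Ozawa's argument from \cite{Oz10}: take the \emph{infimum} $\kappa$ over all adapted relative approximate identities of $\limsup_i\|\psi_i\|_A$, select a sequence nearly attaining $\kappa$, and run a parallelogram-law argument to show that averaging with $\psi_i^u(x):=\psi_i(xu^*)u$ forces $\|\mu_i^u-\mu_i\|\to 0$. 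Neither of these mechanisms appears in your sketch, and without one of them the convergence $\|\mu_n\circ\Ad(\pi(u)\theta(\ubar))-\mu_n\|\to 0$ is simply unproved. Finally, Lemma~\ref{lem.make-Om} is invoked in the paper not as a patch for CMAP vs.\ weak amenability, but as the device for converting the non-normal weak$^*$-limit $\Om$ into a net of normal states with the right invariances, via the normal/singular decomposition; your positioning of it is somewhat off, though that is a lesser point.
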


\subsection{\boldmath Easy proof of Theorem \ref{thm.omn} when $\Gamma$ has CMAP}

In the case where $\Gamma$ has CMAP, the proof of Theorem \ref{thm.omn} is very similar to the proof of \cite[Theorem 3.5]{OP07}.

Fix a sequence $f_n : \Gamma \recht \C$ of finitely supported functions tending to $1$ pointwise and satisfying $\limsup_n \|f_n\|\cb = 1$. Denote by $\m_n : \rL(\Gamma) \recht \rL(\Gamma)$ the corresponding normal completely bounded maps given by $\m_n(u_g) = f_n(g) u_g$ for all $g \in \Gamma$. We also define $\vphi_n : M \recht M$ given by $\vphi_n = \id \ot \m_n$.

Define the von Neumann algebras $N$ and $\cN$, together with the embeddings $\pi : M \recht \cN$ and $\theta : P\op \recht \cN$ as in the formulation of Theorem \ref{thm.omn}. Note that $\pi(M)$ commutes with $\theta(P\op)$ and that together they generate $\cN$.

\begin{proof}[Proof of Theorem \ref{thm.omn} in the case where $\Gamma$ has CMAP]
Denote by $\bim{M}{\cK}{M}$ the $M$-$M$-bimodule $\cK := \rL^2(M) \ot_B \rL^2(M)$ and explicitly denote by $\lambda : M \recht \B(\cK)$ and $\rho : M\op \recht \B(\cK)$ the normal $*$-homomorphisms given by the left and the right bimodule action. Define the von Neumann algebra $\cS_A := \lambda(M) \vee \rho(A\op)$.

We claim that there exists a normal completely positive unital map $\cE : \cN \recht \cS_A$ satisfying
$$\cE(\pi(x) \theta(y\op)) = \lambda(x) \rho(E_A(y)\op) \quad\text{for all}\;\; x \in M, y \in P \; .$$
To prove this claim, recall that $\cN$ is defined as the von Neumann algebra acting on $(\rL^2(M) \ot_A \rL^2(P)) \ot \ell^2(\Gamma)$ generated by $\pi(M)$ and $\theta(P\op)$. The formula
\begin{align*}
V : \cK \recht (\rL^2(M) \ot_A \rL^2(P)) \ot \ell^2(\Gamma) : V ((b \ot u_g) \ot_B x) & = (bx \ot_A 1) \ot \delta_g \\ & \text{for all}\;\; b \in B, g \in \Gamma, x \in M \; ,
\end{align*}
yields a well defined isometry and $\cE$ can be defined by the formula $\cE(z) = V^* z V$ for all $z \in \cN$. This proves the claim.

We next claim that there exists a sequence of normal functionals $\mu_n^A \in (\cS_A)_*$ satisfying
$$\mu_n^A(\lambda(x) \rho(a\op)) = \tau(\vphi_n(x) a)\quad\text{for all}\;\; x \in M , a \in A \; .$$
This claim follows from a direct computation and the formula
$$\mu_n^A(T) = \sum_{g \in \supp f_n} \; f_n(g) \; \langle T \, (1 \ot_B (1 \ot u_g)) , ((1 \ot u_g) \ot_B 1) \rangle \quad\text{for all}\;\; T \in \cS_A \; ,$$
which is meaningful because $f_n$ is finitely supported.

We define $\gamma_n \in \cN_*$ by the formula $\gamma_n = \mu_n^A \circ \cE$ and put $\om_n := \|\gamma_n\|^{-1} |\gamma_n|$. We will prove that $\om_n \in \cN_*$ is a sequence of normal states that satisfies the conclusion of Theorem \ref{thm.omn}. Note that by definition
\begin{equation}\label{eq.defining-gamman}
\gamma_n(\pi(x)\theta(y\op)) = \tau(\vphi_n(x) E_A(y)) \quad\text{for all}\;\; x \in M, y \in P \; .
\end{equation}

For every $u \in \cN_M(A)$ the expression $\Ad(\lambda(u) \rho(\ubar))$ defines an automorphism of $\cS_A$.
We will prove the following two statements.
\begin{enumerate}
\item $\limsup_n \|\mu_n^A\| = 1$.
\item $\lim_n \|\mu_n^A \circ \Ad(\lambda(u) \rho(\ubar)) - \mu_n^A \| = 0$ for all $u \in \cN_M(A)$.
\end{enumerate}
Once these two statements are proven, we get that $\limsup_n \|\gamma_n\| = 1$ because $\gamma_n = \mu_n^A \circ \cE$. Since $\gamma_n(1) \recht 1$, it will follow that $\|\gamma_n - \om_n\| \recht 0$. Because $\cE \circ \Ad(\pi(u) \theta(\ubar)) = \Ad(\lambda(u) \rho(\ubar)) \circ \cE$ for all $u \in \cN_M(A)$, we also get that
$$\lim_n \|\gamma_n \circ \Ad(\pi(u) \theta(\ubar)) - \gamma_n \| = 0 \quad\text{for all}\;\; u \in \cN_M(A) \; .$$
Then the same holds for $\om_n$ instead of $\gamma_n$ and all the required properties of $\om_n$ are proven, or follow directly from \eqref{eq.defining-gamman} and the fact that $\|\gamma_n - \om_n\| \recht 0$.

It remains to prove statements 1 and 2 above. Define $S_A$ as the C*-algebra acting on $\cK$ generated $\lambda(M)$ and $\rho(A\op)$. Note that $S_A$ is a dense C$^*$-subalgebra of $\cS_A$. Since the norm of a normal functional coincides with the norm of its restriction to a dense C$^*$-subalgebra, we from now on consider $\mu_n^A$ as a continuous functional on $S_A$ and compute all norms inside $S_A^*$.

Whenever $Q \subset P$ is a von Neumann subalgebra, we define $S_Q$ as the C$^*$-algebra acting on $\cK$ generated by $\lambda(M)$ and $\rho(Q\op)$. As with $\mu_n^A$ above, the formula
$$\mu_n^Q : S_Q \recht \C : \mu_n^Q(\lambda(x) \rho(y\op)) = \tau(\vphi_n(x) y) \quad\text{for all}\;\; x \in M, y \in Q$$
defines a sequence of continuous functionals $\mu_n^Q$ on $S_Q$. We claim that if $Q$ is amenable relative to $B$, then $\limsup_n \|\mu_n^Q\| = 1$. The special case $Q = A$ then yields statement 1 above. To prove this claim, first observe that there is a sequence of completely bounded maps $\vphitil_n : S_Q \recht S_Q$ satisfying
$$
\vphitil_n(\lambda(x) \rho(y\op)) = \lambda(\vphi_n(x)) \rho(y\op) \quad\text{for all}\quad x \in M, y \in Q \quad\text{and}\quad \|\vphitil_n\|\cb = \|f_n\|\cb \; .
$$
To see this, it suffices to consider the unitary operator
$$U : \cK \recht \rL^2(B) \ot \ell^2(\Gamma) \ot \ell^2(\Gamma) : (b \ot u_g) \ot_B (c \ot u_h) \mapsto bc \ot \delta_g \ot \delta_h$$
for all $b,c \in B, g,h \in \Gamma$, which satisfies $U \lambda(b \ot u_g) U^* =b \ot u_g \ot 1$ for all $b \in B$, $g \in \Gamma$, and $U \rho(Q\op)U^* \subset \B(\rL^2(B)) \ovt 1 \ovt \B(\ell^2(\Gamma))$. We can then define
$\vphitil_n(z) = U^* (\id \ot \m_n \ot \id)(U z U^*) U$ for all $z \in S_Q$.

Since $Q$ is amenable relative to $B$, we know from point 3 in Proposition \ref{prop.char} that the bimodule $\bim{M}{\rL^2(M)}{Q}$ is weakly contained in the bimodule $\bim{M}{\cK}{Q}$.
Denoting by $\lambda_{\rL^2(M)}$ and $\rho_{\rL^2(M)}$ the left and right module action of $M$ and $M\op$ on $\rL^2(M)$, we then get a continuous $*$-homomorphism $\Theta : S_Q \recht \B(\rL^2(M))$ satisfying
$$\Theta(\lambda(x) \rho(y\op)) = \lambda_{\rL^2(M)}(x) \rho_{\rL^2(M)}(y\op) \quad\text{for all}\;\; x \in M, y \in Q \; .$$
Since
$$\mu_n^Q(z) = \langle \Theta(\vphitil_n(z)) \, 1 , 1 \rangle \quad\text{for all}\;\; z \in Q \; ,$$
the above claim follows and also statement 1 is proven.

To prove statement 2, fix $u \in \cN_M(A)$ and define $Q \subset P$ as the von Neumann algebra generated by $A$ and $u$. By Lemma \ref{lem.rel-amen-crossed}, $Q$ is amenable relative to $A$. Since $A$ is amenable relative to $B$, it then follows from Corollary \ref{cor.passage-rel-amen} that also $Q$ is amenable relative to $B$. Therefore we have $\limsup_n \|\mu_n^Q\| = 1$. The definition of $\mu_n^Q$ immediately gives us $\mu_n^Q(1) = \tau(\vphi_n(1)) \recht 1$ as well as
$$\mu_n^Q(\lambda(u) \rho(\ubar)) = \tau(\vphi_n(u) \, E_Q(u^*)) = \tau(\vphi_n(u) \, u^*) \recht 1$$
since $u \in Q$. Since $\limsup_n \|\mu_n^Q\| = 1$, it follows that
$$\bigl\| \, \mu_n^Q \circ \Ad(\lambda(u) \rho(\ubar)) - \mu_n^Q \, \bigr\| \recht 0 \; .$$
Restricting the functionals $\mu_n^Q \circ \Ad(\lambda(u) \rho(\ubar))$ and $\mu_n^Q$ to $S_A$, statement 2 follows.

As explained above the proof of statements 1 and 2 concludes the proof of the theorem.
\end{proof}

\subsection{\boldmath Proof of Theorem \ref{thm.omn} for arbitrary weakly amenable $\Gamma$}

For arbitrary weakly amenable groups $\Gamma$, our proof of Theorem \ref{thm.omn} follows very closely the proof of \cite[Theorem B]{Oz10}.
We start by the following adaptation of \cite[Lemma 6]{Oz10}.

\begin{lemma}\label{lem.adapted}
Let $M = B \ovt D$ be the tensor product of two tracial von Neumann algebras. Let $A \subset M$ be a von Neumann subalgebra that is amenable relative to $B$. Consider the $M$-$A$-bimodule $\cK := \rL^2(M) \ot_B \rL^2(M)$ and denote by $\lambda(x)$ and $\rho(a\op)$ the left and the right module action of $x \in M$, $a \in A$. Denote by $S_A$ the C$^*$-algebra generated by $\lambda(M)$ and $\rho(A\op)$.

We say that a normal completely bounded map $\psi : M \recht M$ is \emph{adapted} if there exists a $4$-tuple $(\pi,\cH,V,W)$ consisting of a $*$-representation $\pi$ of the C$^*$-algebra $S_A$ on a Hilbert space $\cH$ and bounded maps $V, W : \cN_M(A) \recht \cH$ such that
\begin{equation}\label{eq.crucial}
\tau(w^* \psi(x) v a) = \langle \pi(\lambda(x) \rho(a\op)) V(v), W(w) \rangle \quad\text{for all}\;\; x \in M, a \in A, v,w \in \cN_M(A) \; .
\end{equation}
Here we write $\|V\|_\infty := \sup \{\|V(v)\| \mid v \in \cN_M(A)\}$.
Following \cite[Discussion after Lemma 6]{Oz10} we define $\|\psi\|_A$ as the infimum of all $\|V\|_\infty \, \|W\|_\infty$, where $(\pi,\cH,V,W)$ ranges over all $4$-tuples satisfying \eqref{eq.crucial}.

\begin{enumerate}
\item If $\m : D \recht D$ is a normal completely bounded map, then $\id \ot \m : M \recht M$ is adapted and $\|\id \ot \m\|_A \leq \|\m\|\cb$.

\item If $\psi : M \recht M$ is an adapted normal completely bounded map and if $u_1,u_2 \in \cN_M(A)$ and $x_1,x_2 \in M$, then also the normal completely bounded map $x \mapsto u_1^* \psi(x_1^* x x_2) u_2$ is adapted.
\end{enumerate}
\end{lemma}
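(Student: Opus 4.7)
\emph{Part 2} first, since it is a direct manipulation on the defining identity. Given a 4-tuple $(\pi, \cH, V, W)$ witnessing that $\psi$ is adapted, I would rewrite
\[
\tau(w^* u_1^* \psi(x_1^* x x_2) u_2 v a) = \tau((u_1 w)^* \psi(x_1^* x x_2) (u_2 v) a),
\]
noting that $u_1 w$ and $u_2 v$ still lie in $\cN_M(A)$ because $\cN_M(A)$ is a group. Applying \eqref{eq.crucial} for $\psi$ with these new normalizers and the input $x_1^* x x_2$, and using $\lambda(x_1^* x x_2) = \lambda(x_1)^* \lambda(x) \lambda(x_2)$ together with the fact that $\lambda(x_1), \lambda(x_2)$ commute with $\rho(a^\op)$, this becomes
\[
\langle \pi(\lambda(x) \rho(a^\op)) \, \pi(\lambda(x_2)) V(u_2 v), \, \pi(\lambda(x_1)) W(u_1 w)\rangle .
\]
Setting $V'(v) := \pi(\lambda(x_2)) V(u_2 v)$ and $W'(w) := \pi(\lambda(x_1)) W(u_1 w)$ therefore yields an adapted 4-tuple $(\pi, \cH, V', W')$ for $x \mapsto u_1^* \psi(x_1^* x x_2) u_2$, with $\|V'\|_\infty \|W'\|_\infty \leq \|x_1\| \|x_2\| \|V\|_\infty \|W\|_\infty$.

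\emph{Part 1} then requires exhibiting $\id \ot \m$ as adapted with $\|\id \ot \m\|_A \leq \|\m\|\cb$. I would invoke Paulsen's generalization of Stinespring for the normal CB map $\m : D \recht D$: there exist a Hilbert space $\cH_0$, a normal $*$-representation $\sigma : D \recht \B(\cH_0)$, and bounded operators $V_0, W_0 : \rL^2(D) \recht \cH_0$ with $\|V_0\|\|W_0\| \leq \|\m\|\cb$ such that $\langle \m(d) \xi, \eta\rangle_2 = \langle \sigma(d) V_0 \xi, W_0 \eta\rangle_{\cH_0}$ for all $d \in D$ and $\xi, \eta \in \rL^2(D)$. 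Using the canonical identification $\cK \cong \rL^2(B) \ot \rL^2(D) \ot \rL^2(D)$, in which the left $M$-action lives on the first two factors and the right $M$-action on the first and third, I would build the Hilbert space $\cH := \rL^2(B) \ot \rL^2(D) \ot \cH_0$ and extend the tautological representation of $S_A$ on $\cK$ to a representation $\pi$ on $\cH$, so that $\lambda(M)$ still acts by left multiplication on the first two factors, while $\rho(M^\op)$ is dilated to use $\sigma$ on the third factor. The vectors $V(v) := (1 \ot 1 \ot V_0)(v \otimes_B 1)$ and $W(w) := (1 \ot 1 \ot W_0)(w \otimes_B 1)$ would then realize $\id \ot \m$ as adapted by direct evaluation of the inner product, the Stinespring identity for $\m$ producing exactly the factor $\m(d)$ in the $D$-coordinate while the $B$-coordinate and $A$-action remain untouched. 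Taking the infimum over Paulsen dilations gives $\|\id \ot \m\|_A \leq \|V_0\|\|W_0\| \leq \|\m\|\cb$.

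The main obstacle is in Part 1: the extended right action $\rho(M^\op)$ must be a genuine normal $*$-anti-representation that restricts sensibly to $A^\op$. Since $A \subset M = B \ovt D$ need not be spanned by elementary tensors $b \ot d$, one cannot simply prescribe the action on such tensors and extend naively. The resolution is to use the bimodule form of Paulsen--Wittstock (equivalently, treat $\m$ as a $2 \times 2$ matrix of normal CP maps and apply the usual Stinespring construction to each entry), which upgrades $\cH_0$ to a $D$-$D$-bimodule carrying a normal right $D$-action commuting with $\sigma$. The right $M$-action on $\cH$ is then the external tensor product of right multiplication by $B$ on the first factor with this right $D$-action on $\cH_0$, and restricts by ultraweak continuity and $*$-preservation to the desired normal anti-representation of $A$. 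Once this bookkeeping is in place, the adapted identity \eqref{eq.crucial} follows by a short and mechanical computation.
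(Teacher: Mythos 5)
Your proof of Part 2 is correct and is essentially identical to the paper's argument: replace $(w,v,x)$ by $(u_1w, u_2v, x_1^*xx_2)$ in \eqref{eq.crucial}, factor $\lambda(x_1^*xx_2)$ as $\lambda(x_1)^*\lambda(x)\lambda(x_2)$ using that $\lambda(x_1),\lambda(x_2)$ commute with $\rho(a\op)$, and absorb $\pi(\lambda(x_2))$, $\pi(\lambda(x_1))$ into $V$, $W$.

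Part 1, however, has a genuine gap. Notice that your construction nowhere uses the hypothesis that $A$ is amenable relative to $B$, and this hypothesis cannot be dropped: take $\m = \id$ and $v=w=1$; then the adaptedness of $\id$ would force the vector functional $\tau(xa) = \langle\pi(\lambda(x)\rho(a\op))V(1),W(1)\rangle$ to exist on $S_A$, which (by Proposition~\ref{prop.char}.3) is exactly the statement that $A$ is amenable relative to $B$. So any proof of Part~1 must exploit this assumption. The paper does so by taking the $*$-homomorphism $\Theta:S_A\recht\B(\rL^2(M))$ coming from the weak containment of $\bim{M}{\rL^2(M)}{A}$ in $\bim{M}{\cK}{A}$, defining a CB map $\psitil$ on $S_A$ via the unitary $\cK\cong\rL^2(B)\ot\rL^2(D)\ot\rL^2(D)$, and then Stinespring-dilating the composition $\Theta\circ\psitil:S_A\recht\B(\rL^2(M))$; the maps $V,W$ are defined on $\rL^2(M)$ and restricted to $\cN_M(A)\subset\rL^2(M)$. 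Your route bypasses $\Theta$ entirely.

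The computation also breaks. Under the identification $\cK\cong\rL^2(B)\ot\rL^2(D)\ot\rL^2(D)$, the vector $v\otimes_B 1$ for $v=b\ot d$ maps to $b\ot d\ot 1$, so your $V(v)$ has $V_0(1)$ in the last slot; likewise for $W(w)$. Writing out $\langle\pi(\lambda(x)\rho(a\op))V(v),W(w)\rangle$ for elementary tensors $x=b'\ot d'$, $a=b''\ot d''$, $v=b\ot d$, $w=b_0\ot d_0$ gives a product
\[
\tau_B(b_0^*b'bb'')\;\tau_D(d_0^*d'd)\;\bigl\langle\rho'(d''{}\op)V_0(1),W_0(1)\bigr\rangle,
\]
which factors over the $D$-variables, whereas the target $\tau(w^*(\id\ot\m)(x)va)=\tau_B(b_0^*b'bb'')\,\tau_D(d_0^*\m(d')dd'')$ couples $d'$, $d$, $d''$ multiplicatively through $\m$. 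Specializing to $d_0=d=1$ forces $\tau_D(d')\,\langle\rho'(d''{}\op)V_0(1),W_0(1)\rangle=\tau_D(\m(d')d'')$ for all $d',d''$, which is false already for $\m$ a nontrivial $*$-automorphism. This failure is structural, not a bookkeeping issue about getting a genuine right $D$-action on $\cH_0$ (which you correctly flag as nontrivial for CB maps), so upgrading $\cH_0$ to a $D$-$D$-bimodule would not rescue the argument.
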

\begin{proof}
We start by proving the first statement. Assume that $\m : D \recht D$ is a normal completely bounded map. The formula
$$U : \cK \recht \rL^2(B) \ot \rL^2(D) \ot \rL^2(D) :  (b \ot d) \ot_B (b' \ot d') \mapsto bb' \ot d \ot d'$$
yields a unitary satisfying $U \lambda(b \ot d) U^* = b \ot d \ot 1$ for all $b \in B, d \in D$ and $U \rho(A\op) U^* \subset \B(\rL^2(B)) \ovt 1 \ovt \B(\rL^2(D))$. So the formula $\psitil(z) := U^* (\id \ot \m \ot \id)(U z U^*) U$ provides a normal completely bounded map $\psitil : S_A \recht S_A$ satisfying
$$\psitil(\lambda(x) \rho(a\op)) = \lambda((\id \ot \m)(x)) \rho(a\op) \quad\text{for all}\;\; x \in M , a \in A \; .$$
Note that $\|\psitil\|\cb = \|\m\|\cb$.

Since $A$ is amenable relative to $B$, we know from point 3 in Proposition \ref{prop.char} that the bimodule $\bim{M}{\rL^2(M)}{A}$ is weakly contained in the bimodule $\bim{M}{\cK}{A}$. So we have a continuous $*$-homomorphism $\Theta : S_A \recht \B(\rL^2(M))$ satisfying
$$\Theta(\lambda(x) \rho(a\op)) = \lambda_{\rL^2(M)}(x) \rho_{\rL^2(M)}(a\op) \quad\text{for all}\;\; x \in M, a \in A \; .$$
We now apply a Stinespring-type factorization theorem (see e.g.\ \cite[Theorem B.7]{BO08}) to the completely bounded map $\Theta \circ \psitil : S_A \recht \B(\rL^2(M))$. We find a $*$-representation $\pi : S_A \recht \B(\cH)$ of $S_A$ on a Hilbert space $\cH$ and bounded operators $V, W : \rL^2(M) \recht \cH$ such that
$$\Theta(\psitil(z)) = W^* \pi(z) V \quad\text{for all}\;\; z \in S_A \quad\text{and}\quad \|V\| \, \|W\| = \| \Theta \circ \psitil \|\cb \leq \|\psitil\|\cb = \|\m\|\cb \; .$$
Define $V,W : \cN_M(A) \recht \cH$ given by restricting $V$, $W$ to $\cN_M(A) \subset \rL^2(M)$. We have $\|V\|_\infty \, \|W\|_\infty \leq \|V\| \, \|W\| \leq \|\m\|\cb$. A direct computation yields that \eqref{eq.crucial} holds for $\psi = \id \ot \m$. So $\id \ot \m$ is adapted and
$$\|\id \ot \m\|_A \leq \|V\|_\infty \, \|W\|_\infty \leq \|\m\|\cb \; .$$

The proof of the second statement is straightforward. Assume that $(\pi,\cH,V,W)$ satisfies \eqref{eq.crucial} with respect to $\psi$. Define $\psitil(x) = u_1^* \psi(x_1^* x x_2) u_2$. Put $\Vtil(v) = \pi(\lambda(x_2))(V(u_2 v))$ and $\Wtil(w) = \pi(\lambda(x_1))(V(u_1 w))$. It is straightforward to check that \eqref{eq.crucial} holds for $(\pi,\cH,\Vtil,\Wtil)$ with respect to $\psitil$. So $\psitil$ is adapted.
\end{proof}

\begin{definition}\label{def.finite-rank}
Let $(M,\tau)$ be a tracial von Neumann algebra and $B \subset M$ a von Neumann subalgebra. We say that a linear map $\psi : M \recht M$ has \emph{finite rank relative to $B$} if $\psi$ can be written as a finite linear combination of the maps $\{\psi_{y,z,r,t} \mid y,z,r,t \in M\}$ where
$$\psi_{y,z,r,t} : M \recht M : x \mapsto y E_B(z x r) t$$
and where $E_B : M \recht B$ denotes the unique trace preserving conditional expectation.

We call a net of linear maps $\psi_i : M \recht M$ an \emph{approximate identity relative to $B$} if all the $\psi_i$ are completely bounded, of finite rank relative to $B$, and if they satisfy
$$\sup_i \|\psi_i\|\cb < \infty \quad\text{and}\quad \lim_i \|\psi_i(x) - x \|_2 = 0 \quad\text{for all}\;\; x \in M \; .$$
\end{definition}

The following proposition follows by a straightforward ``relativization to $B$'' of the proof of \cite[Proposition 7]{Oz10}. For completeness we nevertheless give a detailed proof.

\begin{proposition}\label{prop.approx-ad-invariant}
Let $\Gamma$ be a weakly amenable group and $(B,\tau)$ a tracial von Neumann algebra. Put $M = B \ovt \rL(\Gamma)$ and let $A \subset M$ be a von Neumann subalgebra that is amenable relative to $B$. Consider the $M$-$A$-bimodule $\cK := \rL^2(M) \ot_B \rL^2(M)$ and denote by $\lambda(x)$ and $\rho(a\op)$ the left and the right module action of $x \in M$, $a \in A$. Denote by $S_A$ the C$^*$-algebra generated by $\lambda(M)$ and $\rho(A\op)$.

Then $M$ admits an approximate identity relative to $B$, denoted by $\psi_i : M \recht M$, such that all the $\psi_i$ are adapted in the sense of Lemma \ref{lem.adapted} and such that the functionals $\mu_i \in S_A^*$ given by
$$\mu_i : S_A \recht \C : \mu_i(\lambda(x) \rho(a\op)) = \tau(\psi_i(x) a) \quad\text{for all}\;\; x \in M, a \in A$$
satisfy
\begin{itemize}
\item $\sup_i \|\mu_i\| < \infty$,
\item $\lim_i \|\mu_i \circ \Ad(\lambda(u) \rho(\ubar)) - \mu_i\| = 0$ for all $u \in \cN_M(A)$,
\item $\lim_i \|(\lambda(v) \rho(\vbar)) \cdot \mu_i - \mu_i\| = 0$ for all $v \in \cU(A)$, where the functional $(\lambda(v) \rho(\vbar)) \cdot \mu_i$ in $S_A^*$ is defined by the formula $((\lambda(v) \rho(\vbar)) \cdot \mu_i)(z) = \mu_i(z \lambda(v) \rho(\vbar))$ for all $z \in S_A$.
\end{itemize}
\end{proposition}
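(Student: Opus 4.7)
My plan is to start from the natural adapted approximate identity provided by the weak amenability of $\Gamma$, and then enforce the two near-invariance properties by a Day–Mazur convexity argument. Concretely, I pick finitely supported Herz–Schur multipliers $f_n : \Gamma \recht \C$ with $f_n \recht 1$ pointwise and $\Lambda := \sup_n \|f_n\|\cb < \infty$, let $\m_n : \rL(\Gamma) \recht \rL(\Gamma)$ be the associated normal c.b.\ maps, and set $\psi_n := \id_B \ot \m_n$. Since $f_n$ is finitely supported, $\psi_n$ expands as a finite linear combination of the maps $x \mapsto E_B\bigl(x (1 \ot u_g^*)\bigr)(1 \ot u_g)$, hence has finite rank relative to $B$. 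Lemma~\ref{lem.adapted}(1) makes $\psi_n$ adapted with $\|\psi_n\|_A \leq \|\m_n\|\cb \leq \Lambda$; specializing \eqref{eq.crucial} to $v = w = 1$ gives $\|\mu_n\| \leq \|\psi_n\|_A \leq \Lambda$; and $\psi_n(x) \recht x$ in $\|\cdot\|_2$ for every $x \in M$ by construction.

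The heart of the argument is to verify that for every fixed $u \in \cN_M(A)$ and $v \in \cU(A)$, the defects
$\mu_n \circ \Ad(\lambda(u)\rho(\ubar)) - \mu_n$ and $(\lambda(v)\rho(\vbar)) \cdot \mu_n - \mu_n$ tend to $0$ in the weak-$*$ topology on $S_A^*$. A direct unfolding gives
$$(\mu_n \circ \Ad(\lambda(u)\rho(\ubar)))(\lambda(x)\rho(a\op)) = \tau\bigl(\psi_n(uxu^*) \, uau^*\bigr), \quad ((\lambda(v)\rho(\vbar))\cdot\mu_n)(\lambda(x)\rho(a\op)) = \tau\bigl(\psi_n(xv)\, v^*a\bigr),$$
while $\mu_n(\lambda(x)\rho(a\op)) = \tau(\psi_n(x) a)$. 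Since $\psi_n \recht \id$ pointwise in $\|\cdot\|_2$, all three expressions converge to $\tau(xa)$, so the difference of any two tends to $0$ at every generator; combined with $\sup_n \|\mu_n\| \leq \Lambda$ this upgrades to weak-$*$ convergence on all of $S_A$.

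Now I invoke the Hahn–Banach/Mazur theorem in the finite product $(S_A^*)^{\cF}$ indexed by any finite subset $\cF \subset \cN_M(A) \cup \cU(A)$: the zero vector lies in the norm closure of the convex hull of the defect vectors $(\mu_n - \mu_n \circ \Ad(\lambda(u)\rho(\ubar)))_{u \in \cF} \oplus (\mu_n - (\lambda(v)\rho(\vbar))\cdot\mu_n)_{v \in \cF \cap \cU(A)}$. Thus for every finite $\cF$, every finite $\cF_0 \subset M$, and every $\eps > 0$, I can pick indices $n_1,\ldots,n_r$ (all large enough that $\|\psi_{n_k}(x) - x\|_2 < \eps$ for $x \in \cF_0$) and a probability vector $(t_k)$ so that $\psi := \sum_k t_k \psi_{n_k}$ has associated functional $\mu = \sum_k t_k \mu_{n_k}$ which is $\eps$-invariant under $\Ad(\lambda(u)\rho(\ubar))$ for all $u \in \cF$ and under left multiplication by $\lambda(v)\rho(\vbar)$ for all $v \in \cF \cap \cU(A)$. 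That $\psi$ is still adapted with $\|\psi\|_A \leq \Lambda$ is witnessed by the direct sum data $\bigl(\bigoplus_k \pi_k,\, \bigoplus_k \cH_k,\, \bigoplus_k \sqrt{t_k}\, V_k,\, \bigoplus_k \sqrt{t_k}\, W_k\bigr)$, and finite rank relative to $B$ is preserved under convex combinations; indexing by triples $(\cF, \cF_0, \eps)$ yields the desired net $(\psi_i)$.

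I expect the main obstacle to be the Mazur step itself: one has to produce a \emph{single} convex combination that simultaneously handles conjugation by every $u \in \cF$ and multiplication by every $\lambda(v)\rho(\vbar)$ with $v \in \cF \cap \cU(A)$, and one must verify that adaptedness and the $\|\cdot\|_A$-bound survive this operation. Packaging the defects as a single vector in the finite direct sum $(S_A^*)^{\cF}$ and exploiting the weak-norm closure agreement of convex subsets addresses the simultaneity issue; the elementary Cauchy–Schwarz computation $\|\bigoplus_k \sqrt{t_k} V_k\|_\infty \leq \bigl(\sum_k t_k \|V_k\|_\infty^2\bigr)^{1/2} \leq \max_k \|V_k\|_\infty$ (after normalizing each witnessing pair so that $\|V_k\|_\infty = \|W_k\|_\infty$) ensures that the $\|\cdot\|_A$-bound for the convex combination does not blow up.
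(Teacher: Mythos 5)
Your setup, norm bound $\sup_n\|\mu_n\|\leq\Lambda$, and the three unfolding formulas are all correct, as is the observation that at every generator $\lambda(x)\rho(a\op)$ all three expressions converge to $\tau(xa)$, hence the defects tend to $0$ in the $\sigma(S_A^*,S_A)$ topology. The gap is the Mazur step. Mazur's theorem equates the \emph{weak} closure of a convex set with its \emph{norm} closure, where ``weak'' in the dual space $S_A^*$ means $\sigma(S_A^*,S_A^{**})$; what you have is only \emph{weak-$*$} convergence $\sigma(S_A^*,S_A)$, and these are genuinely different because $S_A$ is far from all of $S_A^{**}$. Weak-$*$ convergence of a bounded sequence to $0$ in a dual Banach space does \emph{not} put $0$ in the norm-closed convex hull: in $\ell^1=(c_0)^*$ the unit vectors $e_n\recht 0$ weak-$*$, yet every convex combination $\sum t_k e_{n_k}$ has norm exactly $1$. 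Equivalently, the Hahn--Banach separating functional produced if $0\notin\overline{\convex}^{\|\cdot\|}$ lives in $S_A^{**}$, not in $S_A$, so weak-$*$ smallness of the defects contradicts nothing. Replacing $S_A^*$ by the predual $(\cS_A)_*$ (where your $\mu_n$ do indeed live, being finite sums of vector states) does not repair this either: testing against $S_A$ gives $\sigma((\cS_A)_*,S_A)$ convergence, and since $S_A$ is only $\sigma$-weakly dense---not norm dense---in $\cS_A$, this is still strictly weaker than the weak topology $\sigma((\cS_A)_*,\cS_A)$ needed for Mazur.

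This is precisely the point where the CMAP case ($\Lambda=1$) is trivial and the weakly amenable case ($\Lambda>1$) is hard, and it is why the paper uses a completely different mechanism, following Ozawa. Define $\kappa$ as the infimum of $\limsup_i\|\psi_i\|_A$ over \emph{all} adapted approximate identities relative to $B$, and take a near-optimal one with witnesses satisfying $\lim_i\|V_i\|_\infty=\lim_i\|W_i\|_\infty=\sqrt\kappa$. For $u\in\cN_M(A)$, the twisted family $\psi_i^u(x):=\psi_i(xu^*)u$ and the average $(\psi_i+\psi_i^u)/2$ are again adapted approximate identities, the latter witnessed by $(V_i+V_i^u)/2$ with $V_i^u(v)=\pi_i(\lambda(u))^*V_i(v)$; optimality of $\kappa$ forces $\liminf_i\|(V_i+V_i^u)/2\|_\infty\geq\sqrt\kappa$, and since $\|V_i\|_\infty,\|V_i^u\|_\infty\recht\sqrt\kappa$ the parallelogram law yields $\|V_i(v_i)-V_i^u(v_i)\|\recht 0$ for suitable $v_i$, which translates into $\|\mu_i^u-\mu_i\|\recht 0$ and then, by symmetry, the asserted norm estimates. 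The extremality is doing real work that no convex-averaging argument can replace; your proposal would need to incorporate this (or an equally strong substitute) to close the gap.
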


\begin{proof}
Whenever $\psi_i : M \recht M$ is a normal completely bounded map that is adapted in the sense of Lemma \ref{lem.adapted}, it follows from \eqref{eq.crucial} that the corresponding functional $\mu_i$ on $S_A$ is well defined and continuous, and satisfies $\|\mu_i\| \leq \|\psi_i\|_A$. Here, and in the rest of the proof, we use the notation $\|\psi_i\|_A$ introduced in Lemma \ref{lem.adapted}.

Since $\Gamma$ is weakly amenable we can take a sequence $f_n : \Gamma \recht \C$ of finitely supported functions that tend to $1$ pointwise and satisfy $\limsup_n \|f_n\|\cb < \infty$. Denote by $\m_n : \rL(\Gamma) \recht \rL(\Gamma)$ the corresponding completely bounded maps given by $\m_n(u_g) = f_n(g) u_g$ for all $g \in \Gamma$. Then $\id \ot \m_n : M \recht M$ forms an approximate identity relative to $B$. From Lemma \ref{lem.adapted}.1 we know that $\id \ot \m_n$ is adapted and that
$$\limsup_n \|\id \ot \m_n\|_A \leq \limsup_n \|\m_n\|\cb = \limsup_n \|f_n\|\cb < \infty \; .$$

Denote by $\kappa \geq 1$ the infimum of all the numbers $\limsup_i \|\psi_i\|_A$ where $(\psi_i)$ ranges over all \emph{adapted} approximate identities of $M$ relative to $B$. Because we have the adapted approximate identity relative to $B$ given as $(\id \ot \m_n)$, we know that $\kappa < \infty$.

Then $M$ admits an adapted approximate identity relative to $B$, denoted as $\psi_i : M \recht M$, and $4$-tuples $(\pi_i,\cH_i,V_i,W_i)$ satisfying \eqref{eq.crucial} with respect to $\psi_i$ and satisfying
$$\lim_i \|V_i\|_\infty = \sqrt{\kappa} = \lim_i \|W_i\|_\infty \; .$$

We will prove that the net $(\psi_i)$ satisfies the conclusion of the proposition.

First fix $u \in \cN_M(A)$ and define
$$\psi_i^u : M \recht M : \psi_i^u(x) = \psi_i(x u^*) u \; .$$
Note that every $\psi_i^u$ still has finite rank relative to $B$ in the sense of Definition \ref{def.finite-rank}. Hence $(\psi_i^u)$ and also $(\psi_i + \psi_i^u)/2$ are approximate identities of $M$ relative to $B$. Define $V_i^u(v) := \pi_i(\lambda(u))^* V_i(v)$ for all $v \in \cN_M(A)$. A direct computation shows that $(\pi_i,\cH_i,V_i^u,W_i)$ satisfies \eqref{eq.crucial} with respect to $\psi_i^u$. So the $4$-tuple $(\pi_i,\cH_i, (V_i + V_i^u)/2,W_i)$ also satisfies \eqref{eq.crucial} with respect to $(\psi_i + \psi_i^u)/2$. We conclude that $(\psi_i + \psi_i^u)/2$, and all its subnets, are adapted approximate identities relative to $B$. It follows that $\liminf_i \|(\psi_i + \psi_i^u)/2\|_A \geq \kappa$, which implies that
$$\kappa \leq \liminf_i \|(\psi_i + \psi_i^u)/2\|_A \leq \liminf_i \|(V_i + V_i^u)/2\|_\infty \, \|W_i\|_\infty = \sqrt{\kappa} \liminf_i \|(V_i + V_i^u)/2\|_\infty \; .$$
So we can choose $v_i \in \cN_M(A)$ such that
$$\liminf_i \Bigl\|\frac{V_i(v_i) + V_i^u(v_i)}{2}\Bigr\| \geq \sqrt{\kappa} \; .$$
Since $\|V_i(v_i)\| \leq \|V_i\|_\infty \recht \sqrt{\kappa}$ and also $\|V_i^u(v_i)\| \leq \|V_i^u\|_\infty = \|V_i\|_\infty \recht \sqrt{\kappa}$, the parallelogram law implies that $\|V_i(v_i) - V_i^u(v_i)\| \recht 0$.

Now define the functionals $\mu^u_i \in S_A^*$ that are associated with $\psi_i^u$ by the formula
$$\mu^u_i : S_A \recht \C : \mu_i(\lambda(x) \rho(a\op)) = \tau(\psi_i(xu^*)u a) \quad\text{for all}\;\; x \in M, a \in A \; .$$
One computes that for all $x \in M$, $a \in A$ and all $i$ we have
\begin{align*}
& \bigl(\mu^u_i \circ \Ad (\rho(\overline{v}_i))\bigr) (\lambda(x) \rho(a\op)) = \tau(\psi_i(x u^*) u v_i a v_i^*) = \langle \pi_i(\lambda(x) \rho(a\op)) V_i^u(v_i), W_i(v_i)\rangle \; ,\\
& \bigl(\mu_i \circ \Ad (\rho(\overline{v}_i))\bigr) (\lambda(x) \rho(a\op)) = \tau(\psi_i(x) v_i a v_i^*) =  \langle \pi_i(\lambda(x) \rho(a\op)) V_i(v_i) , W_i(v_i) \rangle \; .
\end{align*}
Hence,
$$\|\mu^u_i - \mu_i\| = \bigl\| (\mu^u_i - \mu_i) \circ \Ad(\rho(\overline{v}_i))\bigr\| \leq \|V_i^u(v_i) - V_i(v_i)\| \, \|W_i(v_i)\| \recht 0 \; .$$
Starting from the approximate identity relative to $B$ given by $\psi^u_i$, we can similarly consider the approximate identity relative to $B$ given by $^u(\psi^u_i) : x \mapsto u^* \psi^u_i(u x) = u^*\psi_i(uxu^*)u$. The net of functionals corresponding to $(^u(\psi^u_i))$ is precisely $\mu_i \circ \Ad (\lambda(u) \rho(\ubar))$. So, by symmetry
$$\lim_i \|\mu_i \circ \Ad(\lambda(u) \rho(\ubar))) - \mu_i^u \| = 0 \; .$$
Since we already showed that $\lim_i \|\mu^u_i - \mu_i\| =0$ we arrive at the required result  that $$\lim_i \|\mu_i \circ \Ad (\lambda(u) \rho(\ubar)) - \mu_i\|=0$$ for all $u \in \cN_M(A)$.

Finally, if $v \in \cU(A)$ we have $(\lambda(v^*) \rho(v\op)) \cdot \mu_i = \mu_i^v$. Since $v \in \cU(A)$ certainly normalizes $A$, we already showed that $\|\mu_i^v - \mu_i\| \recht 0$. Hence also
$\lim_i \|(\lambda(v^*) \rho(v\op))\cdot \mu_i - \mu_i\| = 0$ and the proposition is proven.
\end{proof}

Finally we are ready to prove Theorem \ref{thm.omn}

\begin{proof}[Proof of Theorem \ref{thm.omn}]
Take an adapted approximate identity $(\psi_i)$ of $M$ relative to $B$ satisfying the conclusion of Proposition \ref{prop.approx-ad-invariant}. This means that the continuous functionals
$$\mu_i : S_A \recht \C : \mu_i(\lambda(x) \rho(a\op)) = \tau(\psi_i(x) a) \quad\text{for all}\;\; x \in M, a \in A$$
satisfy $\sup_i \|\mu_i\| < \infty$, $\lim_i \|\mu_i \circ \Ad(\lambda(u) \rho(\ubar)) - \mu_i\| = 0$ for all $u \in \cN_M(A)$ and $\lim_i \|(\lambda(a) \rho(\abar)) \cdot \mu_i - \mu_i\| = 0$ for all $a \in \cU(A)$.

Define the von Neumann algebra $\cS_A := \lambda(M) \vee \rho(A\op)$ acting on the Hilbert space $\cK = \rL^2(M) \ot_B \rL^2(M)$. Observe that $S_A$ is a weakly dense C$^*$-subalgebra of $\cS_A$. We claim that the functionals $\mu_i \in S_A^*$ are normal on $\cS_A$. The $\psi_i$ have finite rank relative to $B$ in the sense of Definition \ref{def.finite-rank}. Using the notation introduced in Definition \ref{def.finite-rank}, in order to prove the claim, it suffices to construct for every $y,z,r,t \in M$ a normal functional $\mu_{y,z,r,t} \in (\cS_A)_*$ satisfying
$$
\mu_{y,z,r,t}(\lambda(x) \rho(a\op))=  \tau(\psi_{y,z,r,t}(x) a) \quad\text{for all}\;\; x \in M , a \in A \; .
$$
Since $\cK = \rL^2(M) \ot_B \rL^2(M)$, a straightforward computation yields that we can take $\mu_{y,z,r,t}$ of the form
$$\mu_{y,z,r,t}(T) = \langle T (r \ot_B t), z^* \ot_B y^* \rangle \quad\text{for all}\;\; T \in \cS_A \; .$$
This proves the claim on the normality of the functionals $\mu_i$.

We next claim that there exists a normal completely positive unital map $\cE : \cN \recht \cS_A$ satisfying
$$\cE : \cN \recht \cS_A : \cE(\pi(x) \theta(y\op)) = \lambda(x) \rho(E_A(y)\op) \quad\text{for all}\;\; x \in M, y \in P \; .$$
To prove this claim, recall that $\cN$ is defined as the von Neumann algebra acting on $(\rL^2(M) \ot_A \rL^2(P)) \ot \ell^2(\Gamma)$ generated by $\pi(M)$ and $\theta(P\op)$. The formula
$$V : \cK \recht (\rL^2(M) \ot_A \rL^2(P)) \ot \ell^2(\Gamma) : V ((b \ot u_g) \ot_B x) = (bx \ot_A 1) \ot \delta_g$$
yields a well defined isometry and $\cE$ can be defined by the formula $\cE(z) = V^* z V$ for all $z \in \cN$. This proves the claim.

Define the normal functionals $\gamma_i \in \cN_*$ by the formula $\gamma_i := \mu_i \circ \cE$. Note that
\begin{equation}\label{eq.defgammai}
\gamma_i(\pi(x) \theta(y\op))=  \tau(\psi_i(x) E_A(y)) \quad\text{for all}\;\; x \in M , y \in P \; .
\end{equation}
By the defining property \eqref{eq.defgammai} we have that $\gamma_i(\pi(x)) \recht \tau(x)$ for all $x \in M$. We also have $\|\gamma_i\| \leq \|\mu_i\|$ and hence $\sup_i \|\gamma_i\| < \infty$.

Since for all $u \in \cN_M(A)$ we have $\cE \circ \Ad(\pi(u)\theta(\ubar)) = \Ad(\lambda(u)\rho(\ubar)) \circ \cE$, we conclude that for all $u \in \cN_M(A)$ we have
$$\|\gamma_i \circ \Ad(\pi(u)\theta(\ubar)) - \gamma_i \| \leq \|\mu_i \circ \Ad (\lambda(u) \rho(\ubar)) - \mu_i \| \recht 0 \; .$$
A similar reasoning yields for all $a \in \cU(A)$ that
$$\|(\pi(a) \theta(\abar)) \cdot \gamma_i - \gamma_i \| \recht 0 \; .$$

Choose $\Theta \in \cN^*$ as a weak$^*$ limit point of the net $(\gamma_i)$. By construction
\begin{itemize}
\item $\Theta(\pi(x)) = \tau(x)$ for all $x \in M$,
\item $(\pi(a) \theta(\abar)) \cdot \Theta = \Theta$ for all $a \in \cU(A)$,
\item $\Theta \circ \Ad(\pi(u) \theta(\ubar)) = \Theta$ for all $u \in \cN_M(A)$.
\end{itemize}
Define the positive functional $\Psi \in \cN^*_+$ given by $\Psi := |\Theta|$. For all $u \in \cN_M(A)$ we have
$$|\Theta| \circ \Ad(\pi(u) \theta(\ubar)) = |\Theta \circ \Ad(\pi(u) \theta(\ubar))| = |\Theta| \; ,$$
meaning that $\Psi$ is $(\Ad(\pi(u) \theta(\ubar)))_{u \in \cN_M(A)}$-invariant.

For all $a \in \cU(A)$, we have
\begin{equation}\label{eq.stapje}
(\pi(a) \theta(\abar)) \cdot \Theta = \Theta \; .
\end{equation}
Take a partial isometry $V \in \cN^{**}$ such that $\Psi(x) = \Theta(V x)$ for all $x \in \cN$. Applying $V$ to the equality \eqref{eq.stapje}, we conclude that
$\Psi(\pi(a) \theta(\abar)) = \Psi(1)$ for all $a \in \cU(A)$.

We finally prove that the restriction of $\Psi$ to $\pi(M)$ is faithful. Let $p \in M$ be a nonzero projection. For every $x \in \cN$ we have $|\Theta(x)|^2 \leq \|\Theta\| \, \Psi(x^* x)$. So we get that
$$\tau(p)^2 = |\Theta(\pi(p))|^2 \leq \|\Theta\| \, \Psi(p) \; .$$
Hence $\Psi(p) > 0$.

Define the subgroups $\cG_1,\cG_2 \subset \cU(\cN)$ given by $\cG_1 := \{\pi(a)\theta(\abar) \mid a \in \cU(A)\}$ and $\cG_2 := \{\pi(u) \theta(\ubar) \mid u \in \cN_M(A)\}$. Observe that the unitaries in $\cG_2$ normalize $\pi(M)$ and implement on $\pi(M)$ an automorphism that is inner and hence preserves the trace $\tau$. Lemma \ref{lem.make-Om} provides us now with a state $\Om \in \cN^*_+$ such that
\begin{itemize}
\item $\Om(\pi(x)) = \tau(x)$ for all $x \in M$,
\item $\Om(\pi(a)\theta(\abar)) = 1$ for all $a \in \cU(A)$,
\item $\Om \circ \Ad(\pi(u) \theta(\ubar)) = \Om$ for all $u \in \cN_M(A)$.
\end{itemize}
Take a net of normal states $\om_i \in \cN_*$ such that $\om_i \recht \Om$ in the weak$^*$ topology. So $\om_i(\pi(x)) \recht \tau(x)$ for all $x \in M$ and $\om_i(\pi(a)\theta(\abar)) \recht 1$ for all $a \in \cU(A)$. Also, for all $u \in \cN_M(A)$ we have that
$$\om_i \circ \Ad(\pi(u) \theta(\ubar)) - \om_i \recht 0 \quad\text{weakly in $\cN_*$.}$$
After a passage to convex combinations, we find a net of normal states satisfying all the required conditions.
\end{proof}

\section{\boldmath Proof of Theorem \ref{thm.source}}\label{sec.proofsource}

By Lemma \ref{lem.reduction} it suffices to prove Theorem \ref{thm.source} for the trivial action of $\Gamma$ on $(B,\tau)$. Moreover, for notational convenience, we assume that the projection $q$ in the formulation of Theorem \ref{thm.source} equals $1$. In Remark \ref{rem.generalq} at the end of this section, we explain the necessary changes that are needed to deal with the general case. These changes are only cosmetic, but notationally cumbersome.

We fix a weakly amenable group $\Gamma$, a tracial von Neumann algebra $(B,\tau)$ and a $1$-cocycle $c : \Gamma \recht K_\R$ into the orthogonal representation $\eta : \Gamma \recht \cO(K_\R)$.
Write $M := B \ovt \rL(\Gamma)$ and fix a von Neumann subalgebra $A \subset M$ that is amenable relative to $B$. Denote by $P := \cN_M(A)\dpr$ its normalizer. We denote by $(u_g)_{g \in \Gamma}$ the canonical unitaries in $\rL(\Gamma)$.

As in Theorem \ref{thm.omn} we denote by $N$ the von Neumann algebra generated by $B$ and $P\op$ on the Hilbert space $\rL^2(M) \ot_A \rL^2(P)$. We always view $B$ and $P\op$ as commuting subalgebras of $N$ that together generate $N$. We fix a standard Hilbert space $H$ for $N$ and view $N$ as acting on $H$. This standard representation comes with the anti-unitary involution $J : H \recht H$.

We define $\cN := N \ovt \rL(\Gamma)$ and, as in Theorem \ref{thm.omn}, we consider the tautological embeddings
$$\pi : M \recht \cN : \pi(b \ot u_g) = b \ot u_g \quad\text{and}\quad \theta : P\op \recht \cN : \theta(y\op) = y\op \ot 1$$
for all $b \in B, g \in \Gamma, y \in P$. Clearly $\pi(M)$ commutes with $\theta(P\op)$ and together they generate $\cN$. Being the tensor product of $N$ and $\rL(\Gamma)$, the von Neumann algebra $\cN$ is standardly represented on $\cH := H \ot \ell^2(\Gamma)$ by the formula
$$(x \ot u_g) \cdot (\xi \ot \delta_h) = x \xi \ot \delta_{gh} \quad\text{for all}\;\; x \in N \; , \; g,h \in \Gamma \; , \; \xi \in H \; .$$
The corresponding anti-unitary involution $\cJ : \cH \recht \cH$ is given by $\cJ(\xi \ot \delta_g) = J \xi \ot \delta_{g^{-1}}$.

Take a net of normal states $\om_n \in \cN_*$ satisfying the conclusions of Theorem \ref{thm.omn}. Denote by $\xi_n \in \cH$ the canonical positive unit vectors that implement $\om_n$.
Whenever $u \in \cN_M(A)$ it follows from \cite[Theorem IX.1.2.(iii)]{Ta03} that the vector
$$\pi(u) \, \theta(\ubar) \, \cJ \pi(u) \, \theta(\ubar) \cJ \, \xi_n$$
is the canonical positive vector that implements $\om_n \circ \Ad(\pi(u^*)\theta(u\op))$. Using the Powers-St{\o}rmer inequality (see e.g.\ \cite[Theorem IX.1.2.(iv)]{Ta03}) the conclusion of Theorem \ref{thm.omn} can now be rewritten as follows in terms of the net $(\xi_n)$.
\begin{align}
& \langle \pi(x) \xi_n ,\xi_n \rangle = \om_n(\pi(x)) \recht \tau(x) \quad\text{for all}\;\; x \in M \; , \label{eq.onx}\\
& \| \pi(a) \theta(\abar) \xi_n - \xi_n\| \recht 0 \quad\text{for all}\;\; a \in \cU(A) \label{eq.ona}\; , \\
& \| \pi(u) \, \theta(\ubar) \, \cJ \pi(u) \, \theta(\ubar) \cJ \, \xi_n \; - \; \xi_n \| \recht 0 \quad\text{for all}\;\; u \in \cN_M(A) \; .\label{eq.onu}
\end{align}

To prove Theorem \ref{thm.source} we make use of the malleable deformation $(\al_t)$ of $\cN$ that was associated as follows in \cite{Si10} with the $1$-cocycle $c : \Gamma \recht K_\R$. We apply this malleable deformation $(\al_t)$ to the net $(\xi_n)$. With a proof that is very similar to \cite[Theorem 4.9]{OP07} we will reach the conclusion of Theorem \ref{thm.source}.

First apply the Gaussian construction to the real Hilbert space $K_\R$, yielding a tracial abelian von Neumann algebra $(D,\tau)$, generated by unitaries $\om(\xi)$, $\xi \in K_\R$, satisfying
$$\om(\xi + \xi') = \om(\xi) \, \om(\xi') \; , \quad \om(\xi)^* = \om(-\xi) \; , \quad \tau(\om(\xi)) = \exp(-\|\xi\|^2/2)$$
for all $\xi,\xi' \in K_\R$. The orthogonal representation $\eta : \Gamma \recht \cO(K_\R)$ yields a trace preserving action of $\Gamma$ on $D$, denoted by $(\si_g)_{g \in \Gamma}$, and given by $\si_g(\om(\xi)) = \om(\eta_g \xi)$ for all $g \in \Gamma$, $\xi \in K_\R$.

Denote $\cNtil := N \ovt (D \rtimes \Gamma)$ and view $\cN = N \ovt \rL(\Gamma)$ as a von Neumann subalgebra of $\cNtil$ in the natural way. We put $\Mtil := B \ovt (D \rtimes \Gamma)$ and extend the embedding $\pi : M \recht \cN$ to the still tautological embedding $\pi : \Mtil \recht \cNtil$ given by
$$\pi(b \ot du_g) = b \ot du_g \quad\text{for all}\quad b \in B \; , \; d \in D \; , \; g \in \Gamma \; .$$
We still have $\theta : P\op \recht \cNtil : \theta(y\op) = y\op \ot 1$ for all $y \in P$. We have that $\pi(\Mtil)$ commutes with $\theta(P\op)$ and together they generate $\cNtil$.

The $1$-cocycle $c : \Gamma \recht K_\R$ yields the malleable deformation $(\al_t)_{t \in \R}$ of \cite[Section 3]{Si10}, which is the one-parameter group of automorphisms of $\cNtil$ given by
\begin{equation}\label{eq.alphat}
\al_t( x \ot d u_g ) = x \ot d \om(t c(g)) u_g \quad\text{for all}\;\; x \in N \; , \;\; d \in D \; , \;\; g \in \Gamma \; , \;\; t \in \R \; .
\end{equation}
Note that $\al_t$ globally preserves the subalgebra $\pi(\Mtil) \subset \cNtil$. We also denote by $\al_t$ the corresponding deformation of $\Mtil$. Hence $\al_t \circ \pi = \pi \circ \al_t$. Repeating \eqref{eq.psit} we denote by
\begin{equation}\label{eq.psitbis}
\psi_t : M \recht M : \psi_t(b \ot u_g) = \exp(-t \|c(g)\|^2) (b \ot u_g) \quad\text{for all}\;\; b \in B, g \in \Gamma
\end{equation}
the $1$-parameter group of completely positive maps associated with the $1$-cocycle $c$. We note the crucial formula
$$\psi_{t^2/2}(x) = E_M(\al_t(x)) \quad\text{for all}\;\; x \in M , t \in \R \; .$$

Define $\cHtil := H \ot \rL^2(D) \ot \ell^2(\Gamma)$. Then $\cNtil$ is standardly represented on $\cHtil$ by
$$(x \ot d u_g) \cdot (\xi \ot d' \ot \delta_h) = x \xi \ot d \si_g(d') \ot \delta_{gh}$$
for all $x \in N$, $d, d' \in D$, $g,h \in \Gamma$ and $\xi \in H$. The corresponding anti-unitary involution $\cJtil : \cHtil \recht \cHtil$ is given by
$$\cJtil (\xi \ot d \ot \delta_g) = J \xi \ot \si_{g^{-1}}(d)^* \ot \delta_{g^{-1}}$$
for all $\xi \in H$, $d \in D$, $g \in \Gamma$.

For later use we record the following formulae.
\begin{equation}\label{eq.formrep}
\begin{split}
\pi(b \ot u_g) & \cdot (\xi \ot d \ot \delta_h) = b \xi \ot \si_g(d) \ot \delta_{gh} \; ,\\
\cJtil \pi(b \ot u_g) \cJtil  & \cdot (\xi \ot d \ot \delta_h) = JbJ \xi \ot d \ot \delta_{hg^{-1}} \; , \\
\theta(a\op) & \cdot (\xi \ot d \ot \delta_h) = a\op \xi \ot d \ot \delta_h \; , \\
\cJtil \theta(a\op)\cJtil & \cdot (\xi \ot d \ot \delta_h) = J a\op J \xi \ot d \ot \delta_h \; ,
\end{split}
\end{equation}
for all $b \in B$, $g,h \in \Gamma$, $d \in D$ and $\xi \in H$.

The canonical unitary implementation $(V_t)_{t \in \R}$ of the malleable deformation $(\al_t)_{t \in \R}$ of $\cNtil$ is given by
$$V_t (\xi \ot d \ot \delta_g) = \xi \ot d \, \om(t c(g)) \ot \delta_g$$
for all $\xi \in H$, $d \in D$, $g \in \Gamma$, and satisfies $\cJtil V_t = V_t \cJtil$ for all $t \in \R$.

Denote by $e : \cHtil \recht \cH$ the orthogonal projection, where we identified $\cH = H \ot \ell^2(\Gamma)$ with the subspace $H \ot \C 1 \ot \ell^2(\Gamma)$ of $\cHtil = H \ot \rL^2(D) \ot \ell^2(\Gamma)$. We write $e^\perp := 1 - e$.

We distinguish the following two cases which are each other's negation.

{\bf Case 1.} For every nonzero central projection $p \in \cZ(P)$ and for every $t > 0$ we have
$$\limsup_n \| e^\perp V_t \pi(p) \xi_n \| > \frac{\|p\|_2}{8} \; .$$

{\bf Case 2.} There exists a nonzero central projection $p \in \cZ(P)$ and a $t > 0$ such that
$$\limsup_n \| e^\perp V_t \pi(p) \xi_n \| \leq \frac{\|p\|_2}{8} \; .$$

Denote by $\gamma : \Gamma \recht \cU(\rL^2(D \ominus \C 1))$ the Koopman representation for $\Gamma \actson D \ominus \C 1$. Denote by $\cK^\gamma$ the associated $M$-$M$-bimodule on the Hilbert space $\cK^\gamma := \rL^2(D \ominus \C 1) \ot \rL^2(M)$ as in \eqref{eq.Krhobis}.

We first prove in Lemma \ref{lem.case1} below that in case 1, the $M$-$M$-bimodule $\cK^\gamma$ is left $P$-amenable and that this implies the left $P$-amenability of the $M$-$M$-bimodule $\cK^\eta$ associated with the original orthogonal representation $\eta : \Gamma \recht \cO(K_\R)$.
We next prove in Lemma \ref{lem.case2} below that in case 2 there exist $t,\delta > 0$ such that $\|\psi_t(a)\|_2 \geq \delta$ for all $a \in \cU(A)$.

\emph{So once both Lemmas \ref{lem.case1} and \ref{lem.case2} are proven, also Theorem \ref{thm.source} is proven.}

\begin{lemma}\label{lem.case1}
In case 1 the $M$-$M$-bimodule $\cK^\eta$ is left $P$-amenable.
\end{lemma}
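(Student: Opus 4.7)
The plan is to use the malleable deformation $(\al_t)$ to transport the almost invariant vectors $\xi_n$ from $\cH$ into the off-diagonal subspace $e^\perp \cHtil$, identify the $M$-$M$-bimodule structure on that subspace in terms of Gaussian Fock space pieces, and extract a $P$-central state; the Case 1 hypothesis is precisely what guarantees the resulting state is nonzero on every central corner of $\pi(M)$, which will be crucial for applying Lemma \ref{lem.make-Om}.

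Concretely, I would first note that by \eqref{eq.formrep} all four factors of $W_u := \pi(u)\theta(\ubar)\cJtil\pi(u)\theta(\ubar)\cJtil$ act trivially on the $D$-component, so $W_u$ commutes with $e$ and hence restricts to a unitary on $e^\perp \cHtil$. A direct cocycle computation gives $V_t \pi(b\ot u_g) V_t^* = \al_t(\pi(b\ot u_g))$, while $V_t$ commutes with $\theta(\ubar)$, with $\cJtil \pi(u)\cJtil$ and with $\cJtil\theta(\ubar)\cJtil$. Combined with \eqref{eq.onu}, this yields
\[
e^\perp \bigl(\al_t(\pi(u))\theta(\ubar)\cJtil\pi(u)\theta(\ubar)\cJtil\bigr)\, V_t \xi_n \;-\; e^\perp V_t\xi_n \;\recht\; 0 \quad \text{for all }u \in \cN_M(A),\; t>0.
\]
Since for fixed $u$ one has $\|\al_t(\pi(u))-\pi(u)\|_2 \recht 0$ as $t\recht 0$, a diagonal procedure will produce a net of vectors $\eta_i$ in $e^\perp \cHtil$ that is genuinely almost invariant under all $W_u$, $u \in \cN_M(A)$, and that keeps positive mass by the Case 1 hypothesis with $p=1$.

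Next, I would identify $e^\perp \cHtil = H \otimes \rL^2(D\ominus \C 1) \otimes \ell^2(\Gamma)$ as an $M$-$M$-bimodule via $\pi$. Since $\si_g$ acts on $\rL^2(D\ominus\C 1)$ as $\gamma = \bigoplus_{k\geq 1}\mathrm{Sym}^k(\eta_\C)$ and $H \ot \ell^2(\Gamma) = \rL^2(\cN)$ carries the bimodule $\rL^2(\cN)$ via $\pi$, this subspace is isomorphic to $\cK^\gamma \otimes_M \rL^2(\cN)$. Using the functionals $\Om_i(X) := \langle X\eta_i,\eta_i\rangle$ on $\B(e^\perp\cHtil)\cap (M\op)'$ and a weak$^*$-limit, I obtain a positive functional on the latter algebra that is $\Ad(W_u)$-invariant for all $u\in\cN_M(A)$ and whose restriction to $\pi(M)$ is dominated by $\tau$; the stronger Case 1 statement (for every nonzero $p\in\cZ(P)$) ensures this restriction does not vanish on any central corner of $\pi(M)$. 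Combined with \eqref{eq.ona}, the vectors $\pi(a)\theta(\abar)\eta_i$ are also approximately equal to $\eta_i$, so the functional satisfies $\Psi(\pi(a)\theta(\abar))=\Psi(1)$ for $a\in\cU(A)$. Applying Lemma \ref{lem.make-Om} with $\cG_1$ the group generated by $\{\pi(a)\theta(\abar):a\in\cU(A)\}$ and $\cG_2 = \{W_u : u \in \cN_M(A)\}$ upgrades this to an actual state whose restriction to $\pi(M)$ equals $\tau$, and unwinding the bimodule identifications this state exhibits left $P$-amenability of $\cK^\gamma \otimes_M \rL^2(\cN)$. By condition 5 in Proposition \ref{prop.char} (taking $N=\cN$, $\cH=\rL^2(\cN)$), we deduce left $P$-amenability of the $M$-$M$-bimodule $\cK^\gamma$.

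Finally, to pass from $\cK^\gamma$ to $\cK^\eta$, observe that $\gamma = \bigoplus_{k\geq 1}\mathrm{Sym}^k(\eta_\C)$ is a subrepresentation of $\bigoplus_{k\geq 1}\eta^{\ot k}$, so $\cK^\gamma$ is contained in $\bigoplus_{k\geq 1}(\cK^\eta)^{\otimes_M k} \;\cong\; \cK^\eta \otimes_M \Bigl(\bigoplus_{k\geq 0}(\cK^\eta)^{\otimes_M k}\Bigr)$. By condition 3 of Proposition \ref{prop.char}, left $P$-amenability of $\cK^\gamma$ means $\bim{M}{\rL^2(M)}{P}$ is weakly contained in $\cK^\gamma \otimes_M \overline{\cK^\gamma}$, and hence weakly contained in $\cK^\eta \otimes_M \cH'$ for the $M$-$P$-bimodule $\cH' := \bigoplus_{k\geq 0}(\cK^\eta)^{\otimes_M k} \otimes_M \overline{\cK^\gamma}$. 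Condition 4 of Proposition \ref{prop.char} then gives left $P$-amenability of $\cK^\eta$, as required. The hardest step is the diagonal almost-invariance argument in the first paragraph: one must control the non-commutation of $V_t$ with $\pi(u)$ finely enough, for varying $t\recht 0$ and $n\recht\infty$, to extract a single net of nontrivial, almost $W_u$-invariant vectors simultaneously for all $u$; it is precisely here that the $1/8$ quantitative threshold in Case 1 (uniform in $t$) is used.
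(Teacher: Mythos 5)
Your overall strategy is exactly the paper's: form the vectors $e^\perp V_t \pi(p)\xi_n$, identify the off-diagonal subspace $\cHtil\ominus\cH$ with the bimodule $\cK^\gamma$ tensored on the right with an extra module, extract an almost-invariant positive functional on $\B(\Ktil)\ovt M\cong\B(\cK^\gamma)\cap(M\op)'$, upgrade it with Lemma~\ref{lem.make-Om}, and pass from $\gamma$ to $\eta$ via tensor powers. The final passage from $\cK^\gamma$ to $\cK^\eta$ is fine, only phrased slightly differently from the paper (you decompose $\gamma\leq\bigoplus_{k\geq1}\eta^{\ot k}$ directly, while the paper goes through $\gamma\leq\eta\ot\zeta$ and $\cK^{\eta\ot\zeta}\cong\cK^\eta\ot_M\cK^\zeta$).

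There is, however, a genuine gap in the middle of the argument, and it is precisely at the point you flag as the hardest. You build one net $\eta_i = e^\perp V_{t_i}\xi_{n_i}$ using Case~1 only with $p=1$, take a single weak$^*$ limit $\Om$, and then assert that ``the stronger Case~1 statement (for every nonzero $p\in\cZ(P)$) ensures this restriction does not vanish on any central corner of $\pi(M)$.'' This does not follow. The Case~1 hypothesis for a projection $p$ is a $\limsup_n$ statement, so for each fixed $p$ and $t$ it only guarantees the lower bound along \emph{some} subsequence of $n$, and that subsequence can depend on $p$. A single diagonal choice of $(t_i,n_i)$ tuned to the $p=1$ bound can perfectly well drive $\|\pi(p)\eta_i\|\to 0$ for many $p\in\cZ(P)$, and then $\Om(\Delta_\gamma(p))=0$, so the hypothesis of Lemma~\ref{lem.make-Om} fails: the constructed $\Psi_{|\pi(M)}$ is normal but need not be nonzero on $\pi(p)$, and it is certainly not faithful. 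The paper avoids this by running the construction of the vectors $\mu_n = e^\perp V_t\pi(p)\xi_n$ afresh for each nonzero $p\in\cZ(P)$ (choosing $t$ small depending on $p$), producing a separate functional nonzero specifically on $\Delta_\gamma(p)$, and letting the maximality argument inside Lemma~\ref{lem.make-Om} do the patching across the center. You need that per-$p$ construction; restricting to $p=1$ is not enough.

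Two smaller slips you should also fix. First, $\pi(u)$ does \emph{not} act trivially on the Gaussian component: from \eqref{eq.formrep} it acts by $\si_g$ on $\rL^2(D)$; what is true is that $\si_g$ preserves $\C1$ and $D\ominus\C1$, which is why $e$ still commutes with $W_u$. Second, $V_t$ does \emph{not} commute with $\cJtil\pi(u)\cJtil$; since $\cJtil V_t=V_t\cJtil$ and $V_t\pi(x)V_t^* = \pi(\al_t(x))$, one gets $V_t\cJtil\pi(u)\cJtil V_t^* = \cJtil\pi(\al_t(u))\cJtil$. Both copies of $\pi(u)$ in $W_u$ therefore get perturbed to $\pi(\al_{\mp t}(u))$ after commuting $V_t$ through, not just the first one, and the approximation ``$\al_t(u)\approx u$ for small $t$'' has to be applied to each. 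These are recoverable bookkeeping errors, but your displayed almost-invariance formula is not correct as written.
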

\begin{proof}
Throughout the proof we write $\Ktil := \rL^2(D \ominus \C 1)$.

The main part of the proof consists in showing the left $P$-amenability of the $M$-$M$-bimodule $\cK^\gamma$.
From the definition of the $M$-$M$-bimodule $\cK^\gamma$ in \eqref{eq.Krhobis} we see that $\B(\cK^\gamma) \cap (M\op)'$ can be identified with $\B(\Ktil) \ovt M$ in such a way that the left $M$-module action on $\cK^\gamma$ corresponds to the embedding
$$\Delta_\gamma : M \recht \B(\Ktil) \ovt M : \Delta_{\gamma}(b \ot u_g) = \gamma(g) \ot b \ot u_g \; .$$
So to prove the left $P$-amenability of $\cK^\gamma$, we have to produce a $\Delta_{\gamma}(P)$-central state $\Om$ on $\B(\Ktil) \ovt M$ satisfying $\Om(\Delta_{\gamma}(x)) = \tau(x)$ for all $x \in M$.

Since $P$ is the normalizer of $A$ inside $M$, we have $P' \cap M = \cZ(P)$. We apply Lemma \ref{lem.make-Om} to the von Neumann algebra $\B(\Ktil) \ovt M$ with von Neumann subalgebra $\Delta_{\gamma}(M)$ and groups of unitaries $\cG_1 = \{1\}$, $\cG_2 = \Delta_\gamma(\cU(P))$. To prove the left $P$-amenability of $\cK^\gamma$, by Lemma \ref{lem.make-Om} it suffices to find for every nonzero central projection $p \in \cZ(P)$ a $\Delta_{\gamma}(P)$-central positive functional on $\B(\Ktil) \ovt M$ whose restriction to $\Delta_{\gamma}(M)$ is normal and nonzero on $\Delta_\gamma(p)$. Fix a nonzero central projection $p \in \cZ(P)$.

Consider the unitary operator
$$U : \Ktil \ot H \ot \ell^2(\Gamma) \recht \cHtil \ominus \cH = H \ot \rL^2(D \ominus \C 1) \ot \ell^2(\Gamma) : U(d \ot \xi \ot \delta_g) = \xi \ot d \ot \delta_g$$
for all $d \in D \ominus \C 1$, $\xi \in H$ and $g \in \Gamma$. Consider $\id \ot \pi : \B(\Ktil) \ovt M \recht \B(\Ktil) \ovt \cN$ and then define
$$\Psi : \B(\Ktil) \ovt M \recht \B(\cHtil \ominus \cH) : \Psi(S) = U (\id \ot \pi)(S) U^* \; .$$
For $x \in M$ we can view $\pi(x)$ as an element of $\cNtil$. As such $\pi(x)$ acts on $\cHtil \ominus \cH$ and with this point of view we have $\Psi(\Delta_{\gamma}(x)) = \pi(x)$ for all $x \in M$. Further note that
$$\Psi(\B(\Ktil) \ovt M) = B \, \ovt \, \B(\rL^2(D \ominus \C 1)) \, \ovt \, \{\lambda_g \mid g \in \Gamma\}\dpr \; .$$
Using formulae \eqref{eq.formrep} it follows that
$$\theta(P\op) \vee \cJtil \pi(M) \cJtil \vee \cJtil \theta(P\op) \cJtil = (P\op \vee JBJ \vee JP\op J) \,\ovt\, 1 \,\ovt\, \{\rho_g \mid g \in \Gamma\}\dpr \; .$$
Hence,
\begin{equation}\label{eq.commute}
\Psi(\B(\Ktil) \ovt M) \quad\text{commutes with}\quad \theta(P\op) \vee \cJtil \pi(M) \cJtil \vee \cJtil \theta(P\op) \cJtil \; .
\end{equation}

We claim that there exists a net of vectors $\mu_i \in \cHtil \ominus \cH$ such that $\|\mu_i\| \leq 1$ for all $i$ and
\begin{align}
& \lim_i \| \pi(u) \, \theta(\ubar) \, \cJtil \pi(u) \, \theta(\ubar) \cJtil \, \mu_i - \mu_i \| = 0 \quad\text{for all $u \in \cN_M(A)$,} \label{eq.1}\\
& \limsup_i \|\pi(x) \mu_i\| \leq \|x\|_2 \quad\text{for all $x \in M$,} \label{eq.2} \\
& \liminf_i \| \pi(p) \mu_i \| \geq \frac{\|p\|_2}{16} \; .\label{eq.3}
\end{align}
Once this claim is proven and after a passage to a subnet of $(\mu_i)$, we may assume that the net of positive functionals on $\B(\Ktil) \ovt M$ given by $S \mapsto \langle \Psi(S) \mu_i,\mu_i \rangle$ converges weakly$^*$ to a positive functional $\Om$ on $\B(\Ktil) \ovt M$.

We first prove that \eqref{eq.commute} and \eqref{eq.1} imply that $\Om \circ \Ad \Delta_\gamma(u) = \Om$ for all $u \in \cN_M(A)$. Fix $S \in \B(\Ktil) \ovt M$ and $u \in \cN_M(A)$. Since $\Psi(\Delta_\gamma(x)) = \pi(x)$ for all $x \in M$ and by \eqref{eq.1} and \eqref{eq.commute}, we get that
\begin{align*}
\Omega(\Delta_\gamma(u) \, S \, \Delta_\gamma(u)^*) &= \lim_i \langle \Psi(S) \, \pi(u)^* \mu_i , \pi(u)^* \mu_i \rangle \\
&= \lim_i \langle \Psi(S) \, \theta(\ubar) \, \cJtil \pi(u) \, \theta(\ubar) \cJtil \, \mu_i, \theta(\ubar) \, \cJtil \pi(u) \, \theta(\ubar) \cJtil \, \mu_i \rangle \\
&= \lim_i \langle \Psi(S) \, \mu_i , \mu_i \rangle = \Omega(S) \; .
\end{align*}

Since $\Psi(\Delta_\gamma(x)) = \pi(x)$ for all $x \in M$, the formulae \eqref{eq.2} and \eqref{eq.3} imply that $\Om(\Delta_{\gamma}(x)) \leq \tau(x)$ for all $x \in M^+$ and that $\Om(\Delta_{\gamma}(p)) \geq \tau(p) / 256$. In particular the restriction of $\Om$ to $\Delta_{\gamma}(M)$ is normal and nonzero on $\Delta_\gamma(p)$.

We finally show that $\Om$ is $\Delta_\gamma(P)$-central. Choose $x \in P$ and $S \in \B(\Ktil) \ovt M$ with $\|x\| \leq 1$ and $\|S\| \leq 1$. We need to prove that $\Om(\Delta_\gamma(x)S) = \Om(S\Delta_\gamma(x))$. To prove this formula, choose $\eps > 0$. Take a finite linear combination $y$ of unitaries $u \in \cN_M(A)$ such that $\|x-y\|_2 \leq \eps$. Since $\Om \circ \Ad \Delta_\gamma(u) = \Om$ for all $u \in \cN_M(A)$, we get that $\Om(\Delta_\gamma(y) S) = \Om(S \Delta_\gamma(y))$.
The Cauchy Schwarz inequality, the inequality $\Om(\Delta_\gamma(z)) \leq \tau(z)$ for all $z \in M^+$ and the choice of $\|S\| \leq 1$ imply that
\begin{align*}
|\Om(\Delta_\gamma(x) S) - \Om(\Delta_\gamma(y) S)|^2 & = |\Om(\Delta_\gamma(x-y) S)|^2 \\ & \leq \Om(\Delta_\gamma((x-y)(x-y)^*)) \, \Om(S^* S) \leq \|x-y\|_2^2 \leq \eps^2 \; .
\end{align*}
We similarly get that $|\Om(S \Delta_\gamma(x)) - \Om(S \Delta_\gamma(y))| \leq \eps$. So we have shown that
$$|\Om(\Delta_\gamma(x) S) - \Om(S \Delta_\gamma(x))| \leq 2 \eps$$
for all $\eps > 0$. Hence the required formula $\Om(\Delta_\gamma(x)S) = \Om(S\Delta_\gamma(x))$ follows and we have proven the $\Delta_\gamma(P)$-centrality of $\Om$. As observed in the first paragraph this concludes the proof of the left $P$-amenability of $\cK^\gamma$.

It remains to prove the claim above, i.e.\ the existence of a net of vectors $\mu_i \in \cHtil \ominus \cH$ satisfying $\|\mu_i\| \leq 1$ for all $i$ and satisfying \eqref{eq.1}, \eqref{eq.2} and \eqref{eq.3} above. Take finite subsets $\cF \subset \cN_M(A)$, $\cG \subset M$ and $\eps > 0$. It suffices to find a vector $\mu \in \cHtil \ominus \cH$ such that $\|\mu\| \leq 1$ and
\begin{align}
& \| \pi(u) \, \theta(\ubar) \, \cJtil \pi(u) \, \theta(\ubar) \cJtil \, \mu - \mu \| \leq 3\eps \quad\text{for all $u \in \cF$,} \label{eq.a}\\
& \|\pi(x) \mu\| \leq \|x\|_2 + \eps \quad\text{for all $x \in \cG$,}\label{eq.b}\\
& \| \pi(p) \mu \| \geq \frac{\|p\|_2}{16} - \eps \; .\label{eq.c}
\end{align}
We will find $\mu$ of the form $\mu := e^\perp V_t \pi(p) \xi_n$ by first choosing $t>0$ small enough and then choosing $n$ large enough.

Take $t > 0$ small enough such that
$$\|\al_{-t}(u) - u\|_2 \leq \eps \quad\text{for all $u \in \cF$ and}\quad \|\al_{-t}(p) - p \|_2 \leq \frac{\|p\|_2}{16} \; .$$
Define $\mu_n := e^\perp V_t \pi(p) \xi_n$. We prove that $\mu := \mu_n$ for certain $n$ large enough satisfies the conditions \eqref{eq.a}, \eqref{eq.b} and \eqref{eq.c} above.

The projection $e^\perp$ commutes with $\pi(M)$, $\theta(P\op)$ and with $\cJtil$. The unitary $V_t$ implements $\al_t$ on $\pi(M)$ and commutes with $\theta(P\op)$ and with $\cJtil$. So we get that
$$\pi(u) \, \theta(\ubar) \, \cJtil \pi(u) \, \theta(\ubar) \cJtil \, \mu_n = e^\perp V_t \; \theta(\ubar) \; \cJtil \theta(\ubar) \cJtil \; \pi(\al_{-t}(u)p) \; \cJtil \pi(\al_{-t}(u)) \cJtil \; \xi_n \; .$$
Since $\cJtil \xi_n = \xi_n$ and using \eqref{eq.onx} we have for all $u \in \cF$ that
$$\limsup_n \| \cJtil \pi(\al_{-t}(u)) \cJtil \, \xi_n - \cJtil \pi(u) \cJtil \, \xi_n \| = \|\al_{-t}(u) - u \|_2 \leq \eps \; .$$
We apply $\pi(\al_{-t}(u) p)$ and first observe that
$$\pi(\al_{-t}(u) p) \, \cJtil \pi(u) \cJtil \, \xi_n = \cJtil \pi(u) \cJtil \, \pi(\al_{-t}(u) p) \, \xi_n \; .$$
Again by \eqref{eq.onx} we have
$$\limsup_n \| \pi(\al_{-t}(u) p) \xi_n - \pi(up) \xi_n\| = \|\al_{-t}(u) p - u p\|_2 \leq \eps \; .$$
Altogether it follows that for all $u \in \cF$,
\begin{equation*}
\begin{split}
\limsup_n \| \pi(u) & \, \theta(\ubar) \, \cJtil \pi(u) \, \theta(\ubar) \cJtil \, \mu_n - \mu_n \| \\
& \leq 2\eps + \limsup_n \bigl\| \pi(p) \, \bigl( \pi(u) \, \theta(\ubar) \, \cJ \pi(u) \theta(\ubar) \cJ \, \xi_n - \xi_n \bigr) \bigr\| \; .
\end{split}
\end{equation*}
By \eqref{eq.onu} the $\limsup$ on the right hand side is $0$ and we conclude that \eqref{eq.a} holds for all $\mu := \mu_n$ with $n$ large enough.

Next observe that for all $x \in M$,
\begin{equation*}
\limsup_n \| \pi(x) \mu_n \| \leq \limsup_n \|\pi(\al_{-t}(x) p) \xi_n \| = \|\al_{-t}(x) p\|_2 \leq \|x\|_2 \; .
\end{equation*}
Hence also \eqref{eq.b} holds for all $\mu := \mu_n$ with $n$ large enough.

Finally, by the assumption of case 1 we know that $\limsup_n \|\mu_n\| \geq \|p\|_2 / 8$. Noticing that
$$\limsup_n \| \pi(p) \mu_n - \mu_n \| \leq \limsup_n \|\pi(\al_{-t}(p) p - p) \xi_n\| = \|\al_{-t}(p) p - p \|_2 \leq \frac{\|p\|_2}{16} \; ,$$
we conclude that
\begin{equation*}
\limsup_n \| \pi(p) \mu_n \| \geq \frac{\|p\|_2}{16} \; .
\end{equation*}
So \eqref{eq.c} holds for certain $\mu := \mu_n$ where $n$ can be chosen arbitrarily large.
Altogether there indeed exists an $n$ such that $\mu := \mu_n$ satisfies all the conditions \eqref{eq.a}, \eqref{eq.b} and \eqref{eq.c}.

So we have proven that $\cK^\gamma$ is a left $P$-amenable $M$-$M$-bimodule. It remains to prove that also $\cK^\eta$ is a left $P$-amenable $M$-$M$-bimodule.
Denote by $\epsilon$ the trivial representation of $\Gamma$ and define the unitary representation $\zeta$ of $\Gamma$ as the direct sum of $\epsilon$ and all tensor powers $\eta^{\otimes k}$, $k \geq 1$. The Koopman representation $\gamma : \Gamma \recht \cU(\rL^2(D \ominus \C 1))$ is isomorphic to the direct sum of all the $k$-fold ($k \geq 1$) symmetric tensor powers of $\eta$. Hence $\gamma$ is a subrepresentation of the tensor product representation $\eta \ot \zeta$. By Corollary \ref{cor.weakcontainment}, it follows that $\cK^{\eta \ot \zeta}$ also is a left $P$-amenable $M$-$M$-bimodule. But
$$\bim{M}{\cK^{\eta \ot \zeta}}{M} \; \cong \; \bim{M}{(\cK^\eta \ot_M \cK^\zeta)}{M} \; .$$
Condition~5 in Proposition \ref{prop.char} now implies the left $P$-amenability of $\bim{M}{\cK^\eta}{M}$.
\end{proof}

\begin{lemma}\label{lem.case2}
In case 2 there exist $t,\delta > 0$ such that $\|\psi_t(a)\|_2 \geq \delta$ for all $a \in \cU(A)$.
\end{lemma}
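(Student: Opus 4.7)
Set $s := t^2/2$ where $t$ is the parameter from case~2, so that $\psi_s(x) = E_M(\al_t(x))$ for $x \in M$ and $e\pi(\al_t(x))e = \pi(\psi_s(x))e$ on $\cH$. Writing $\zeta_n := eV_t\pi(p)\xi_n$ and $\eta_n := e^\perp V_t\pi(p)\xi_n$, Pythagoras together with $\|\pi(p)\xi_n\|^2 \to \|p\|_2^2$ from \eqref{eq.onx} and the case~2 hypothesis give $\liminf_n \|\zeta_n\| \geq \|p\|_2\sqrt{63/64}$. For each fixed $a \in \cU(A)$, the centrality $p \in \cZ(P)$ (so $ap = pa$) and the commutation of $\pi(p) \in \pi(M)$ with $\theta(\abar) \in \theta(P\op)$ turn \eqref{eq.ona} into $\pi(a)\theta(\abar)\pi(p)\xi_n \to \pi(p)\xi_n$. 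I then apply $V_t$, using $V_t\pi(a)V_{-t} = \pi(\al_t(a))$ and the commutation of $V_t$ with $\theta(\abar) \in N\ot 1$ (since $\al_t$ fixes $N$), to obtain $\pi(\al_t(a))\theta(\abar)(\zeta_n + \eta_n) \to \zeta_n + \eta_n$. Projecting onto $\cH$ using that $\theta(\abar)$ preserves $\cH$ and $\cH^\perp$, the identity $e\pi(\al_t(a))e = \pi(\psi_s(a))e$ on $\cH$, and bounding the contribution of $\eta_n$ by $\|\eta_n\|$, I get $\limsup_n \|\pi(\psi_s(a))\theta(\abar)\zeta_n - \zeta_n\| \leq \|p\|_2/8$. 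Because $\pi(\psi_s(a))$ and $\theta(\abar)$ lie in commuting subalgebras with $\theta(\abar)$ unitary, this produces the lower bound $\liminf_n \|\pi(\psi_s(a))\zeta_n\| \geq (\sqrt{63} - 1)\|p\|_2/8 =: \delta > 0$.

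For the matching upper bound, I exploit that $\pi(M)$ commutes with $e$, so $\pi(y)\eta_n \in \cH^\perp$ whenever $y \in M$. Expanding $V_t\pi(p)\xi_n = \zeta_n + \eta_n$ in $\langle \pi(y)\zeta_n, \zeta_n\rangle$ and using this orthogonality yields the identity
$$\langle \pi(y)\zeta_n, \zeta_n\rangle = \langle \pi(y) V_t\pi(p)\xi_n, V_t\pi(p)\xi_n\rangle - \langle \pi(y)\eta_n, \eta_n\rangle,$$
so for $y = \psi_s(a)^*\psi_s(a) \geq 0$ the left side is bounded above by the first term, which via $\pi(y)V_t = V_t\pi(\al_{-t}(y))$ equals $\langle \pi(p\al_{-t}(y)p)\xi_n, \xi_n\rangle$. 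I will decompose $\al_{-t}(y) = \psi_s(y) + r$ with $r \in \Mtil \ominus M$; then $prp \in \Mtil \ominus M$ because $E_M$ is an $M$-bimodule map. The key computational fact is that $\langle \pi(r')\mu, \mu\rangle = 0$ for every $r' \in \Mtil \ominus M$ and every vector $\mu \in \cH = H \ot \C\hat 1 \ot \ell^2(\Gamma)$: Fourier-expanding $r' = \sum_g c_g u_g$ along the crossed product $D \rtimes \Gamma$ with $c_g \in B \ovt D$, the condition $r' \in \Mtil \ominus M$ forces $(\id_B \ot \tau_D)(c_g) = 0$ for every $g$; since $\mu$ is supported on the $D$-trivial part of $\cHtil$, a direct computation in the standard representation gives $\langle \pi(c_g u_g)\mu, \mu\rangle = \langle \pi((\id_B \ot \tau_D)(c_g) \ot u_g)\mu, \mu\rangle = 0$.

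Applying this vanishing with $\mu = \xi_n$ and $r' = prp$ kills the $r$-contribution, so $\lim_n \langle \pi(p\al_{-t}(y)p)\xi_n, \xi_n\rangle = \lim_n \langle \pi(p\psi_s(y)p)\xi_n, \xi_n\rangle = \tau(\psi_s(y)p)$ by \eqref{eq.onx}. Using $0 \leq p \leq 1$ together with trace-preservation of $\psi_s$, I bound $\tau(\psi_s(y)p) \leq \tau(\psi_s(y)) = \tau(y) = \|\psi_s(a)\|_2^2$. Combining with the lower bound gives $\delta^2 \leq \|\psi_s(a)\|_2^2$, so $\|\psi_s(a)\|_2 \geq \delta$ uniformly in $a \in \cU(A)$, which proves the lemma with parameters $s = t^2/2$ and $\delta = (\sqrt{63}-1)\|p\|_2/8$. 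The crux is the exact vanishing $\langle \pi(r')\mu, \mu\rangle = 0$ for $r' \in \Mtil \ominus M$ and $\mu \in \cH$, which exploits the Gaussian tensor structure of $\Mtil = B \ovt (D \rtimes \Gamma)$ and is precisely what converts what would otherwise be merely an operator-norm estimate into the required $\|\cdot\|_2$-bound on $\psi_s(a)$.
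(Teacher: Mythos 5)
Your proof is correct; the underlying engine (the compression identity $e\pi(\al_{-t}(x))\mu = \pi(\psi_{t^2/2}(x))\mu$ for $\mu\in\cH$, combined with \eqref{eq.onx}, \eqref{eq.ona} and the case-2 smallness of $\eta_n := e^\perp V_t\pi(p)\xi_n$) is the same as the paper's, but the mechanics differ in two respects. The paper first invokes Popa's transversality property $\|V_{\sqrt 2 t}\mu-\mu\| = \sqrt 2\,\|e^\perp V_t\mu\|$ to convert case~2 into a bound on $\|V_t\pi(p)\xi_n - \pi(p)\xi_n\|$ and then estimates the single quantity $|\langle\pi(a)\theta(\abar)V_t\pi(p)\xi_n, V_t\pi(p)\xi_n\rangle|$ from above (via the compression identity and \eqref{eq.onx}) and below (via \eqref{eq.ona}), yielding $\|\psi_{t^2/2}(a)\|_2 \geq \|p\|_2/2$; you instead work directly with the decomposition $V_t\pi(p)\xi_n = \zeta_n + \eta_n$ and Pythagoras, avoiding transversality, and sandwich $\|\pi(\psi_s(a))\zeta_n\|$ between a lower bound from almost-invariance of $\xi_n$ and an upper bound via positivity of $y=\psi_s(a)^*\psi_s(a)$. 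Your upper bound passes through a "vanishing on $\Mtil\ominus M$" step; note that your explicit Fourier computation is simply re-proving the identity $e\pi(x)\mu = \pi(E_M(x))\mu$ for all $x\in\Mtil$ and $\mu\in\cH$, so the step could be shortened by quoting that compression formula directly, as the paper does in computing the upper bound in \eqref{eq.aim1}. Both routes are valid; yours gives the marginally sharper constant $(\sqrt{63}-1)\|p\|_2/8$.
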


\begin{proof}
Fix a nonzero central projection $p \in \cZ(P)$ and a $t > 0$ such that
$$\limsup_n \| e^\perp V_t \pi(p) \xi_n \| \leq \frac{\|p\|_2}{8} \; .$$
A direct computation yields the transversality property of \cite[Lemma 2.1]{Po06b}~:
$$\|V_{\sqrt{2} t} \; \mu - \mu\| = \sqrt{2} \|e^\perp V_t \mu\| \quad\text{for all $\mu \in \cH \subset \cHtil$.}$$
Replacing $t$ by $\sqrt{2} t$, we have found a nonzero central projection $p \in \cZ(P)$ and a $t > 0$ such that
$$\limsup_n \| V_t \pi(p) \xi_n - \pi(p) \xi_n \| \leq \frac{\|p\|_2}{4} \; .$$

Recall from \eqref{eq.psitbis} the definition of the unital completely positive maps $\psi_t : M \recht M$. Also recall that $\psi_{s^2/2}(x) = E_M(\al_s(x))$ for all $x \in M$ and all $s \in \R$.
We prove that
\begin{equation}\label{eq.aim}
\|\psi_{t^2/2}(a)\|_2 \geq \frac{\|p\|_2}{2} \quad\text{for all}\;\; a \in \cU(A) \; .
\end{equation}
Once this inequality is proven, also the lemma is proven.

To prove \eqref{eq.aim} fix a unitary $a \in \cU(A)$. First notice that for all $\mu \in \cH \subset \cHtil$ and for all $x \in M$, we have
$$e \pi(\al_{-t}(x)) \mu = \pi(\psi_{t^2/2}(x)) \mu \; .$$
Using this formula we next prove that
\begin{equation}\label{eq.aim1}
\limsup_n | \langle \pi(a) \, \theta(\abar) \, V_t \pi(p) \xi_n , V_t \pi(p) \xi_n \rangle| \leq \|\psi_{t^2/2}(a)\|_2 \, \|p\|_2 \; .
\end{equation}
Indeed, since $V_t$ commutes with $\theta(\abar)$ and implements $\al_t$ on $\pi(M)$, we observe that
\begin{align*}
\langle \pi(a) \, \theta(\abar) \, V_t \pi(p) \xi_n , V_t \pi(p) \xi_n \rangle &= \langle \pi(\al_{-t}(a) p) \xi_n , \theta(a\op) \pi(p) \xi_n \rangle \\
&= \langle e \pi(\al_{-t}(a) p) \xi_n , \theta(a\op) \pi(p) \xi_n \rangle \\
&= \langle \pi(\psi_{t^2/2}(a) p) \xi_n , \theta(a\op) \pi(p) \xi_n \rangle
\end{align*}
Using \eqref{eq.onx} the $\limsup$ of the absolute value of the last expression is smaller or equal than
$$\limsup_n \|\pi(\psi_{t^2/2}(a) p) \xi_n \| \, \|\pi(p) \xi_n\| = \|\psi_{t^2/2}(a) p\|_2 \, \|p\|_2 \leq \|\psi_{t^2/2}(a)\|_2 \, \|p\|_2 \; .$$
So \eqref{eq.aim1} is proven.

Secondly, the fact that $\limsup_n \|V_t \pi(p) \xi_n - \pi(p) \xi_n \| \leq \|p\|_2 / 4$, while $\limsup_n \|V_t \pi(p) \xi_n\|_2 = \|p\|_2$, implies that
\begin{equation*}
\limsup_n \bigl| \langle \pi(a) \, \theta(\abar) \, V_t \pi(p) \xi_n , V_t \pi(p) \xi_n \rangle - \langle \pi(a) \, \theta(\abar) \, \pi(p) \xi_n , \pi(p) \xi_n \rangle \bigr| \leq \tau(p) / 2 \; .
\end{equation*}
Since moreover by \eqref{eq.onx} and \eqref{eq.ona} we have
$$\langle \pi(a) \, \theta(\abar) \, \pi(p) \xi_n , \pi(p) \xi_n \rangle \recht \tau(p) \; ,$$
we conclude that
$$\liminf_n \bigl| \langle \pi(a) \, \theta(\abar) \, V_t \pi(p) \xi_n , V_t \pi(p) \xi_n \rangle \bigr| \geq \tau(p) / 2 \; .$$
In combination with \eqref{eq.aim1} we find \eqref{eq.aim} and this ends the proof of the lemma.
\end{proof}

\begin{remark}\label{rem.generalq}
Above we only proved Theorem \ref{thm.source} in the special case where the projection $q$ in the formulation of the theorem equals $1$. Assume now that $q$ is an arbitrary nonzero projection and that $A \subset qMq$ is a von Neumann subalgebra that is amenable relative to $B$. Lemma \ref{lem.reduction} was proven for arbitrary $q$ so that we can still assume that $\Gamma$ acts trivially on $(B,\tau)$.
Denote by $P := \cN_{qMq}(A)\dpr$ the normalizer of $A$ inside $qMq$. Define $N$ as the von Neumann algebra generated by $B$ and $P\op$ on the Hilbert space $\rL^2(M)q \ot_A \rL^2(P)$. Put $\cN := N \ovt \rL(\Gamma)$ and define the tautological embeddings
$$\pi : M \recht \cN : \pi(b \ot u_g) = b \ot u_g \quad\text{and}\quad \theta : P\op \recht \cN : \theta(y\op) = y\op \ot 1$$
for all $b \in B, g \in \Gamma, y \in P$.

With literally the same proof as the one of Theorem \ref{thm.omn}, we find a net of normal positive functionals $\om_i \in (\pi(q)\cN \pi(q))_*$ satisfying the following properties.
\begin{itemize}
\item $\om_i(\pi(x)) \recht \tau(x)$ for all $x \in qMq$,
\item $\om_i(\pi(a) \theta(\abar)) \recht 1$ for all $a \in \cU(A)$,
\item $\|\om_i \circ \Ad(\pi(u) \theta(\ubar)) - \om_i\| \recht 0$ for all $u \in \cN_{qMq}(A)$.
\end{itemize}
Again we take the canonical implementation of the functionals $\om_i$ by positive vectors $(\xi_i)$ in a standard Hilbert space for $\cN$. We proceed with these vectors in exactly the same way as above.
\end{remark}

\section{Proof of Theorem \ref{thm.Crigidgroups}}

Using \cite[Theorem A.1]{Po01} Theorem \ref{thm.Crigidgroups} is an immediate consequence of the following result.

\begin{theorem}
Let $\Gamma$ be any of the groups in the formulation of Theorem \ref{thm.Crigidgroups}. Take an arbitrary trace preserving action $\Gamma \actson (B,\tau)$ and put $M = B \rtimes \Gamma$. Assume that $q \in M$ is a projection and that $A \subset qMq$ is a von Neumann subalgebra that is amenable relative to $B$ and whose normalizer $P:=\cN_{qMq}(A)\dpr$ has finite index in $qMq$. Then $A \prec_M B$.
\end{theorem}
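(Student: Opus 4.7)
The plan is to apply the key technical Theorem~\ref{thm.source} to the inclusion $A \subset qMq$ and then analyze the resulting dichotomy, using both the non-amenability of the representation and the cocycle data provided by the hypothesis on $\Gamma$. First, choose an orthogonal representation $\eta : \Gamma \recht \cO(K_\R)$ and a $1$-cocycle $c : \Gamma \recht K_\R$ according to which family of Theorem~\ref{thm.Crigidgroups} the group $\Gamma$ belongs to: in the second family, $\eta$ is non-amenable and $c$ is proper; in the first family, $\eta$ is non-amenable and mixing while $c$ is only unbounded. Since $\Gamma$ is weakly amenable and $A$ is amenable relative to $B$, Theorem~\ref{thm.source} yields one of the following alternatives:
\begin{itemize}
\item[(i)] the $qMq$-$M$-bimodule $\bim{qMq}{(q \cK^\eta)}{M}$ is left $P$-amenable;
\item[(ii)] there exist $t,\delta > 0$ such that $\|\psi_t(a)\|_2 \geq \delta$ for all $a \in \cU(A)$.
\end{itemize}

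To rule out alternative~(i), I would use the finite-index hypothesis: $qMq$ is amenable relative to $P$, so Corollary~\ref{cor.passage-rel-amen} upgrades left $P$-amenability of $q\cK^\eta$ to left $qMq$-amenability. Using $\cK^\eta \ot_M \overline{\cK^\eta} \cong \cK^{\eta \ot \bar\eta}$ as $M$-$M$-bimodules and condition~3 of Proposition~\ref{prop.char}, this gives weak containment of the trivial $qMq$-$qMq$-bimodule into $q \cK^{\eta \ot \bar\eta} q$. Inflating by $\rL^2(M)q \ot_{qMq} \,\cdot\, \ot_{qMq} q\rL^2(M)$ and passing to the central support of $q$ in $M$, this translates to weak containment of the trivial $M$-$M$-bimodule into $\cK^{\eta \ot \bar\eta}$. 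By the discussion following Definition~\ref{def.left-amenable}, this is equivalent to $\eta$ being amenable as a unitary representation of $\Gamma$, contradicting the choice of $\eta$.

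To derive $A \prec_M B$ from alternative~(ii), I would Fourier-expand $a = \sum_g a_g u_g$ with $a_g = E_B(a u_g^*) \in B$, yielding
\[
\sum_{g \in \Gamma} e^{-2t \|c(g)\|^2} \|a_g\|_2^2 \;=\; \|\psi_t(a)\|_2^2 \;\geq\; \delta^2 \quad\text{for every}\;\; a \in \cU(A).
\]
In the second family, properness of $c$ makes $F_R := \{g \in \Gamma : \|c(g)\| \leq R\}$ finite for each $R$; choosing $R$ large enough that $e^{-2tR^2}\tau(q) \leq \delta^2/2$ converts the above into the uniform lower bound $\sum_{g \in F_R} \|E_B(a u_g^*)\|_2^2 \geq \delta^2/2$ over $a \in \cU(A)$. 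Since $F_R$ is finite, this contradicts the existence of a net in $\cU(A)$ as in the second equivalent condition for $A \not\prec_M B$ of Definition~\ref{def.intertwine}, so $A \prec_M B$.

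The main difficulty lies in the first family of $\Gamma$, where $c$ need only be unbounded and so $g \mapsto e^{-t \|c(g)\|^2}$ need not lie in $c_0(\Gamma)$, causing the finite-set trapping above to break down. Here the mixing of $\eta$ must substitute for properness of $c$. My plan is to exploit that the Gaussian Koopman representation of $\Gamma$ on $\rL^2(D \ominus \C 1)$ is the direct sum of all symmetric tensor powers of (the complexification of)~$\eta$, and hence is again mixing; consequently the malleable deformation $(\al_t)$ of Section~\ref{sec.proofsource} is mixing relative to $B$. Combining this mixing with the transversality estimate of \cite[Lemma~2.1]{Po06b} and a Popa-style spectral gap argument run on $(\al_t)$, the uniform lower bound in~(ii) should again force $A \prec_M B$; the underlying idea is that any net in $\cU(A)$ witnessing $A \not\prec_M B$ would, by mixing, push $\|\psi_t(a_n)\|_2$ to zero and so contradict~(ii). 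Carrying out this mixing-to-intertwining step cleanly is the technically heaviest part of the proof.
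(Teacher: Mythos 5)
Your overall plan follows the paper's proof closely and is essentially correct for most of its parts. Invoking Theorem \ref{thm.source}, upgrading left $P$-amenability of $q\cK^\eta$ to left $qMq$-amenability via Corollary \ref{cor.passage-rel-amen}, and then sliding from $q$ to the central support of $q$ to contradict nonamenability of $\eta$ is exactly how the paper rules out alternative~(i) (it does this via explicit partial isometries $v_1,\ldots,v_n \in M$ and extracting an $(\Ad\eta_g)$-invariant state on $\B(K)$; your ``inflation'' step is the same move, just sketched). Your treatment of the second family, using $\|\psi_t(x)\|_2^2 = \sum_g e^{-2t\|c(g)\|^2}\|x_g\|_2^2$ and properness of $c$, also matches the paper verbatim.

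However, the first family argument has a genuine gap, and you have correctly flagged it as the delicate part without supplying the right idea to close it. Your stated ``underlying idea'' --- that mixing of $\eta$ would push $\|\psi_t(a_n)\|_2 \recht 0$ along a sequence $a_n \in \cU(A)$ witnessing $A \not\prec_M B$ --- is false. The map $\psi_t$ depends only on the scalars $\|c(g)\|^2$, not on $\eta$; if $c$ is merely unbounded (not proper), the set $\{g : \|c(g)\| \leq R\}$ may be infinite for every $R$, and there is nothing in the Fourier-coefficient decay $\|(a_n)_g\|_2 \recht 0$ (for fixed $g$) that lets you conclude $\psi_t(a_n) \recht 0$; concretely, if $\|c(\cdot)\|$ were constant on a large infinite subset $S$ supporting the $a_n$, then $\psi_t(a_n)$ would just be a scalar multiple of $a_n$. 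Mixing cannot substitute for properness at the level of $\cU(A)$. What the paper actually does is entirely different: from the uniform bound $\|\psi_t(a)\|_2 \geq \delta$ on $\cU(A)$ and mixing of $\eta$, it invokes \cite[Proposition 3.9]{Va10b} to produce a nonzero $p \in \cZ(P)$ on which the malleable deformation $\al_t$ converges uniformly in $\|\cdot\|_2$ on the unit ball of $Ap$; assuming $A \not\prec_M B$, it then uses the Chifan--Peterson localization result \cite[Theorem 3.10]{Va10b} to upgrade uniform convergence to $Pp$; the finite-index hypothesis together with a Pimsner--Popa basis propagates uniform convergence to $pMp$ and then to $Mz$; and finally the contradiction comes from \emph{unboundedness} of $c$ applied to group unitaries $u_{g_n}$ in $\rL(\Gamma) \subset M$ (not to unitaries in $A$): $\|\psi_t(u_{g_n})\|_2 = e^{-t\|c(g_n)\|^2}\recht 0$, contradicting uniform convergence of $\psi_t \recht \id$ on the unit ball of $Mz$. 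So the dichotomy ``rigidity propagated from $A$ to $M$ versus unboundedness of $c$'' plays out inside $M$, and it is sharpened by the finite-index hypothesis in an essential way; your spectral-gap intuition pointed in the right direction but misidentified where the final contradiction lives.
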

\begin{proof}
Whenever $\eta : \Gamma \recht \cO(K_\R)$ is an orthogonal representation, we consider its complexification $\eta : \Gamma \recht \cU(K)$ and the corresponding $M$-$M$-bimodule $\cK^\eta$ given by \eqref{eq.Krhobis}. Whenever $c : \Gamma \recht K_\R$ is a $1$-cocycle into $\eta$, we consider the $1$-parameter family of completely positive maps $(\psi_t)_{t > 0}$ on $M$ given by \eqref{eq.psit}.

We first prove that if $\eta : \Gamma \recht \cU(K)$ is a unitary representation such that the $P$-$M$-bimodule $q \cK^\eta$ is left $P$-amenable, then $\eta$ is an amenable representation.

So assume that $q \cK^\eta$ is a left $P$-amenable $P$-$M$-bimodule. Since $P \subset qMq$ has finite index, it follows from Corollary \ref{cor.passage-rel-amen} that $q \cK^\eta$ is also left $qMq$-amenable. Defining
$$\Delta_\eta : M \recht \B(K) \ovt M : \Delta_\eta(b u_g) = \eta_g \ot bu_g \quad\text{for all}\;\; b \in B , g \in \Gamma \; ,$$
the left $qMq$-amenability of $q \cK^\eta$ precisely amounts to the existence of a positive functional $\Om$ on $\B(K) \ovt M$ with the following properties.
\begin{itemize}
\item $\Om(1 - \Delta_\eta(q)) = 0$ and $\Om(\Delta_\eta(x)) = \tau(x)$ for all $x \in qMq$.
\item $\Om(S \Delta_\eta(x)) = \Om(\Delta_\eta(x) S)$ for all $S \in \B(K) \ovt M$ and all $x \in qMq$.
\end{itemize}
Choose partial isometries $v_1,\ldots,v_n \in M$ such that $v_i^* v_i \leq q$ for all $i$ and such that $\sum_{i=1}^n v_i v_i^*$ is a nonzero central projection $z \in \cZ(M)$. Define the positive functional $\Omtil$ on $\B(K) \ovt M$ by the formula
$$\Omtil(S) := \sum_{i=1}^n \Om(\Delta_\eta(v_i^*) S \Delta_\eta(v_i)) \quad\text{for all}\;\; S \in \B(K) \ovt M \; .$$
A direct computation yields $\Omtil(\Delta_\eta(x)) = \tau(x)$ for all $x \in Mz$ and that $\Omtil(1 - \Delta_\eta(z)) = 0$.

We now prove that $\Omtil(S \Delta_\eta(x)) = \Omtil(\Delta_\eta(x) S)$ for all $S \in \B(K) \ovt M$ and all $x \in M$. Since $z$ is central, we have $x v_i = z x v_i$ and $v_j^* x z = v_j^* x$ for all $i,j$. Also observe that $v_j^* x v_i \in qMq$ for all $x \in M$ and all $i,j$. So we get that
\begin{align*}
\Omtil(S \Delta_\eta(x)) &= \sum_{i=1}^n \Om(\Delta_\eta(v_i^*) S \Delta_\eta(x v_i)) = \sum_{i=1}^n \Om(\Delta_\eta(v_i^*) S \Delta_\eta(z x v_i)) \\
&= \sum_{i,j = 1}^n \Om(\Delta_\eta(v_i^*) S \Delta_\eta(v_j) \Delta_\eta(v_j^* x v_i)) \\
&= \sum_{i,j = 1}^n \Om(\Delta_\eta(v_j^* x v_i v_i^*) S \Delta_\eta(v_j)) \\
&= \sum_{j=1}^n \Om(\Delta_\eta(v_j^* x z) S \Delta_\eta(v_j)) = \sum_{j=1}^n \Om(\Delta_\eta(v_j^* x ) S \Delta_\eta(v_j)) \\
&= \Omtil(\Delta_\eta(x) S) \; .
\end{align*}

Define the state $\Psi$ on $\B(K)$ by the formula $\Psi(S) = \Omtil(1)^{-1} \Omtil(S \ot 1)$. The following computation shows that $\Psi$ is $(\Ad \eta_g)_{g \in \Gamma}$-invariant and hence that $\eta$ is an amenable representation.
\begin{align*}
\Omtil(1) \, \Psi(S \eta_g) &= \Omtil(S \eta_g \ot 1) = \Omtil\bigl((S \ot u_g^*) \Delta_\eta(u_g) \bigr) \\
&= \Omtil\bigl( \Delta_\eta(u_g) (S \ot u_g^*) \bigr) = \Omtil(\eta_g S \ot 1) = \Omtil(1) \, \Psi(\eta_g S) \; .
\end{align*}

We are now ready to prove that for both families of groups $\Gamma$ in the formulation of Theorem \ref{thm.Crigidgroups}, we get that $A \prec_M B$. If $\beta_1^{(2)}(\Gamma) > 0$, we know that $\Gamma$ is nonamenable and that $\Gamma$ admits an unbounded $1$-cocycle $c$ into a multiple of the regular representation. The regular representation is mixing and is nonamenable by the nonamenability of $\Gamma$. So to cover the first family of groups in Theorem \ref{thm.Crigidgroups} it suffices to consider a weakly amenable group $\Gamma$ that admits an unbounded $1$-cocycle $c : \Gamma \recht K_\R$ into a nonamenable mixing representation $\eta : \Gamma \recht \cO(K_\R)$. From the discussion above we know that the $P$-$M$-bimodule $q \cK^\eta$ is not left $P$-amenable. So, from Theorem \ref{thm.source} we get $t,\delta > 0$ such that $\|\psi_t(a)\|_2 \geq \delta$ for all $a \in \cU(A)$.

As in \eqref{eq.alphat}, we consider the malleable deformation $(\al_t)$ of the tracial von Neumann algebra $\Mtil := (B \ovt D) \rtimes \Gamma$, where $\Gamma \actson (D,\tau)$ is the Gaussian action corresponding to $\eta : \Gamma \recht \cO(K_\R)$ and where $\Gamma \actson B \ovt D$ diagonally. Since $\eta$ is mixing and $\|\psi_t(a)\|_2 \geq \delta$ for all $a \in \cU(A)$, we get from \cite[Proposition 3.9]{Va10b} a nonzero central projection $p \in \cZ(P)$ such that $\al_t \recht \id$ uniformly in $\|\,\cdot\,\|_2$ on the unit ball of $A p$. If $A \not\prec_M B$, it follows from\footnote{We refer here to \cite{Va10b} where the notation and formulation is exactly suited for our purposes. Note however that the quoted result is due to Peterson \cite[Theorem 4.5]{Pe06} and Chifan-Peterson \cite[Theorem 2.5]{CP10}.} \cite[Theorem 3.10]{Va10b} that $\al_t \recht \id$ uniformly in $\|\,\cdot\,\|_2$ on the unit ball of $P p$. Since $P \subset qMq$ has finite index, also $P p \subset p M p$ has finite index. Using a Pimsner-Popa basis\footnote{See \cite[Proposition 1.3]{PP84} and see \cite[Proposition A.2]{Va07} for a nonfactorial version that can be readily applied here.}, it follows that $\al_t \recht \id$ uniformly in $\|\,\cdot\,\|_2$ on the unit ball of $pMp$. Denoting by $z \in \cZ(M)$ the central support of $p$, it follows that $\al_t \recht \id$ uniformly in $\|\,\cdot\,\|_2$ on the unit ball of $Mz$. This means that also $\psi_t \recht \id$ uniformly in $\|\,\cdot\,\|_2$ on the unit ball of $Mz$. If $t \recht 0$, we know that $\|\psi_t(xz) - \psi_t(x) z\|_2$ is small uniformly in $x$ belonging to the unit ball of $M$. So we can fix a $t > 0$ such that
$$\|\psi_t(x) z\|_2 \geq \|z\|_2 / 2 \quad\text{for all}\;\; x \in \cU(M) \; .$$
Since $c : \Gamma \recht K_\R$ is unbounded, we can take a sequence $g_n \in \Gamma$ such that $\|c(g_n)\| \recht \infty$. It follows that $\|\psi_t(u_{g_n})\|_2 \recht 0$ as $n \recht \infty$. Hence also $\|\psi_t(u_{g_n}) z\|_2 \recht 0$, contradicting the previous estimate. So we have shown that actually $A \prec_M B$.

Next consider the case where $\Gamma$ is a weakly amenable group that admits a proper $1$-cocycle $c : \Gamma \recht K_\R$ into a nonamenable representation $\eta : \Gamma \recht \cO(K_\R)$. From the first paragraphs of the proof we know that the $P$-$M$-bimodule $q \cK^\eta$ is not left $P$-amenable. So, from Theorem \ref{thm.source} we get $t,\delta > 0$ such that $\|\psi_t(a)\|_2 \geq \delta$ for all $a \in \cU(A)$. Whenever $x \in M$, we denote by
\begin{equation}\label{eq.fourier}
x = \sum_{g \in \Gamma} x_g u_g \quad\text{with}\;\; x_g \in B \;\;\text{for all}\;\; g \in \Gamma
\end{equation}
the Fourier decomposition of $x$. A direct computation yields
\begin{equation}\label{eq.concreteform}
\|\psi_t(x)\|_2^2 = \sum_{g \in \Gamma} \exp(-2t \|c(g)\|^2) \, \|x_g\|_2^2
\end{equation}
for all $x \in M$, $t > 0$.

If $A \not\prec_M B$, Definition \ref{def.intertwine} yields a sequence of unitaries $a_k \in \cU(A)$ such that for every fixed $g \in \Gamma$, the sequence of $g$'th Fourier coefficients $(a_k)_g \in B$, defined by \eqref{eq.fourier}, satisfies $\lim_k \|(a_k)_g\|_2 = 0$. The properness of the $1$-cocycle $c : \Gamma \recht K_\R$, together with formula \eqref{eq.concreteform}, implies that $\lim_k \|\psi_t(a_k)\|_2 = 0$. This is a contradiction with the property that $\|\psi_t(a_k)\|_2 \geq \delta$ for all $k$. So we also get $A \prec_M B$ when $\Gamma$ belongs to the second family of groups in Theorem \ref{thm.Crigidgroups}.

To finally conclude that $A \prec^f_M B$, observe that \cite[Proposition 2.5]{Va10b} provides a projection $q_0 \in \cZ(P)$ such that $Aq_0 \prec^f_M B$ and $A(q-q_0) \not\prec_M B$. Applying the above to the subalgebra $A(q-q_0) \subset (q-q_0) M (q-q_0)$ implies that $q-q_0 = 0$.
\end{proof}

\section{Proof of Theorem \ref{thm.main}}

Take $M = B \rtimes \Gamma$ as in the formulation of Theorem \ref{thm.main}. Let $A \subset M$ be a von Neumann subalgebra that is amenable relative to $B$ and denote by $P := \cN_M(A)\dpr$ its normalizer.

By our assumptions $\Gamma$ is weakly amenable and we have a proper $1$-cocycle $c : \Gamma \recht K_\R$ into an orthogonal representation $\eta : \Gamma \recht \cO(K_\R)$ that is weakly contained in the regular representation. We consider the $M$-$M$-bimodule $\cK^\eta$ associated with $\eta$ as in \eqref{eq.Krhobis} and we consider the $1$-parameter group $(\psi_t)_{t > 0}$ of completely positive maps on $M$ associated with the $1$-cocycle $c$ as in \eqref{eq.psit}. Theorem \ref{thm.source} says that
\begin{itemize}
\item either the $M$-$M$-bimodule $\cK^\eta$ is left $P$-amenable,
\item or there exist $t,\delta > 0$ such that $\|\psi_t(a)\|_2 \geq \delta$ for all $a \in \cU(A)$.
\end{itemize}

First assume that $\cK^\eta$ is a left $P$-amenable $M$-$M$-bimodule. Since $\eta$ is weakly contained in the regular representation $\lambda$, it follows that $\cK^\eta$ is weakly contained in $\cK^\lambda$ as $M$-$M$-bimodules. Corollary \ref{cor.weakcontainment} then implies that $\cK^\lambda$ is a left $P$-amenable $M$-$M$-bimodule.
As an $M$-$M$-bimodule $\cK^\lambda$ is isomorphic with the $M$-$M$-bimodule $\rL^2(M) \ot_B \rL^2(M)$. So $\bim{M}{(\rL^2(M) \ot_B \rL^2(M))}{M}$ is left $P$-amenable. By condition~5 in Proposition \ref{prop.char} also $\bim{M}{\rL^2(M)}{B}$ is left $P$-amenable. This means exactly that $P$ is amenable relative to $B$.

Finally assume that we have $t,\delta > 0$ such that $\|\psi_t(a)\|_2 \geq \delta$ for all $a \in \cU(A)$. We repeat a paragraph from the proof of Theorem \ref{thm.Crigidgroups}, using the Fourier decomposition of $x \in M$ as in \eqref{eq.fourier}. If $A \not\prec_M B$, Definition \ref{def.intertwine} yields a sequence of unitaries $a_k \in \cU(A)$ such that for every fixed $g \in \Gamma$ we have that $\lim_k \|(a_k)_g\|_2 = 0$. The properness of the $1$-cocycle $c : \Gamma \recht K_\R$, together with formula \eqref{eq.concreteform}, implies that $\lim_k \|\psi_t(a_k)\|_2 = 0$. This is a contradiction with the property that $\|\psi_t(a_k)\|_2 \geq \delta$ for all $k$. So, $A \prec_M B$ and the theorem is proven.\hfill\qedsymbol

\section{Proof of Theorem \ref{thm.main-products}}

Using e.g.\ \cite[Proposition 2.5]{Va10b}, we find projections $p_i \in \cZ(P)$ such that $A p_i \prec^f_M B \rtimes \Gammah_i$ and $A (1-p_i) \not\prec_M B \rtimes \Gammah_i$ for all $i=1,\ldots,n$. Of course, some or even all of the $p_i$ could be zero. Define $p_0 := 1 - (p_1 \vee \cdots \vee p_n)$. We consider the subalgebra $A p_0 \subset p_0 M p_0$, whose normalizer is given by $P p_0$. We need to prove that $P p_0$ is amenable relative to $B$.

By construction, for every $i$ we have that $A p_0 \not\prec_M B \rtimes \Gammah_i$. Viewing $M$ as the crossed product $M = (B \rtimes \Gammah_i) \rtimes \Gamma_i$, it then follows from Theorem \ref{thm.main} that $P p_0$ is amenable relative to $B \rtimes \Gammah_i$ for every $i=1,\ldots,n$.

All the subalgebras $B \rtimes \Gammah_i \subset M$ are regular and all the crossed products of $B$ by a certain number of the $\Gamma_i$'s are in commuting square position with respect to each other. So by Proposition \ref{prop.intersection}, we conclude that $Pp_0$ is amenable relative to $B$.\hfill\qedsymbol

\section{Proof of Theorem \ref{thm.freeproducts}}

Let $\Gamma = \Lambda_1 * \Lambda_2$ be any weakly amenable free product group and consider $M = B \rtimes \Gamma$ as in the formulation of the theorem. Let $A \subset M$ be a von Neumann subalgebra that is amenable relative to $B$. Denote by $P := \cN_M(A)\dpr$ its normalizer. Using e.g.\ \cite[Proposition 2.5]{Va10b}, we can take projections $q, p_1, p_2 \in \cZ(P)$ such that
\begin{itemize}
\item $A q \prec^f_M B$ and $A (1-q) \not\prec_M B$,
\item $P p_i \prec^f_M B \rtimes \Lambda_i$ and $P(1-p_i) \not\prec_M B \rtimes \Lambda_i$ for all $i = 1,2$.
\end{itemize}
As above, some or all of the $q,p_1,p_2$ might be zero. Denote $p_0 = 1 - (q \vee p_1 \vee p_2)$. We have to prove that $P p_0$ is amenable relative to $B$.

For $g \in \Gamma$ denote by $|g|$ the length of $g$, i.e.\ the number of elements needed to write $g$ as an alternating product of elements in $\Lambda_1 - \{e\}$ and $\Lambda_2-\{e\}$. Consider the direct sum $K_\R := \ell^2_\R(\Gamma) \oplus \ell^2_\R(\Gamma)$ of two copies of the regular representation of $\Gamma$ and denote this orthogonal representation as $\eta$. Define the unique $1$-cocycle $c : \Gamma \recht K_\R$ satisfying
$$c(g) = (\delta_g - \delta_e,0) \quad\text{for all}\;\; g \in \Lambda_1 \quad\text{and}\quad c(h) = (0,\delta_h - \delta_e) \quad\text{for all}\;\; h \in \Lambda_2 \; .$$
One computes easily that $\|c(g)\|^2 = 2 |g|$ for all $g \in \Gamma$.

We denote by $\cK^\eta$ the $M$-$M$-bimodule associated with $\eta$ as in \eqref{eq.Krhobis}. We consider the $1$-parameter group $(\psi_t)_{t > 0}$ of completely positive maps on $M$ associated with the $1$-cocycle $c$ as in \eqref{eq.psit}. We apply Theorem \ref{thm.source} to the subalgebra $A p_0 \subset p_0 M p_0$. Note that the normalizer of $A p_0$ inside $p_0 M p_0$ is precisely $P p_0$. So by Theorem \ref{thm.source}, either $p_0 \cK^\eta$ is a left $Pp_0$-amenable $p_0 M p_0$-$M$-bimodule, or there exist $t,\delta > 0$ such that $\|\psi_t(a)\|_2 \geq \delta$ for all $a \in \cU(A p_0)$.

Because by construction $A p_0 \not\prec_M B$ and $P p_0 \not\prec_M B \rtimes \Lambda_i$ for all $i = 1,2$, it follows from one of the main results in \cite{IPP05} (and actually by literally applying the version that we presented as \cite[Theorem 5.4]{PV09}) that it is impossible to have $\|\psi_t(a)\|_2 \geq \delta$ for all $a \in \cU(A p_0)$. So $p_0 \cK^\eta$ is a left $Pp_0$-amenable $p_0 M p_0$-$M$-bimodule. Since $\eta$ is a multiple of the regular representation, this implies in the same way as in the proof of Theorem \ref{thm.main}, that $P p_0$ is amenable relative to $B$.

\section{Stability under measure equivalence subgroups}

Consider the following strengthening of $\cCs$-rigidity involving measure preserving actions on potentially infinite measure spaces.

\begin{definition}\label{def.propstar}
We say that a countable group $\Gamma$ has property ($\ast$) if the following holds: for every measure preserving action $\Gamma \actson (X,\mu)$ on a standard, possibly infinite, measure space $(X,\mu)$ and for every abelian von Neumann subalgebra $A \subset qMq$ where $M = \rL^\infty(X) \rtimes \Gamma$ and $q \in \rL^\infty(X)$ a projection of finite measure, we have the dichotomy that either $A \prec_{qMq} \rL^\infty(X)q$ or that the normalizer $\cN_{qMq}(A)\dpr$ is amenable.
\end{definition}

Obviously every nonamenable group $\Gamma$ satisfying property ($\ast$) is $\cCs$-rigid.

We first prove that any weakly amenable group $\Gamma$ that admits a proper $1$-cocycle into an orthogonal representation that is weakly contained in the regular representation, has property ($\ast$). Then we will show that property ($\ast$) is preserved under the passage to measure equivalence subgroups (ME-subgroups). Also weak amenability is stable under the passage to ME-subgroups. Interestingly enough, it is not known whether having a proper $1$-cocycle into an orthogonal representation that is weakly contained in the regular representation, is stable under ME-subgroups (or even under measure equivalence). To prove such a stability result one needs an integrability condition on the associated orbit equivalence cocycle (cf.\ \cite[Theorem 5.10]{Th08}).

Recall that a countable group $\Lambda$ is said to be an \emph{ME-subgroup} of a countable group $\Gamma$ if $\Gamma \times \Lambda$ admits a measure preserving action on a, typically infinite, standard measure space $(\Omega,m)$ such that both the actions $\Gamma \actson \Om$ and $\Lambda \actson \Om$ are free and admit a fundamental domain, with the fundamental domain of $\Gamma \actson \Om$ having finite measure. If the actions can be chosen in such a way that also the fundamental domain of $\Lambda \actson \Om$ has finite measure, the groups $\Gamma$ and $\Lambda$ are called \emph{measure equivalent}.

\begin{theorem}
Let $\Gamma$ be a weakly amenable group that admits a proper $1$-cocycle into an orthogonal representation that is weakly contained in the regular representation. Then $\Gamma$ has property ($\ast$) in the sense of Definition \ref{def.propstar}.
\end{theorem}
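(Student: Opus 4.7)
The plan is to reduce to a semifinite version of Theorem \ref{thm.source} and then argue as in the proof of Theorem \ref{thm.main}. A useful preliminary observation is that since $B := \rL^\infty(X)$ is abelian and hence amenable, every von Neumann subalgebra of $M$ that is amenable relative to $B$ is automatically amenable: this follows because the weak containment of the trivial $B$-bimodule in the coarse one, Connes-tensored on both sides by $\bim{M}{\rL^2(M)}{B}$ and its conjugate, shows that relative amenability over $B$ composes with amenability of $B$ to give plain amenability (in the spirit of Corollary \ref{cor.passage-rel-amen}). It therefore suffices to establish the dichotomy ``$A \prec_{qMq} Bq$ or $\cN_{qMq}(A)\dpr$ is amenable relative to $B$''.

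I would next establish the following semifinite version of Theorem \ref{thm.source}: if $(B,\tau)$ is any semifinite tracial von Neumann algebra with a trace-preserving $\Gamma$-action, $M = B \rtimes \Gamma$, $q \in B$ a projection of finite trace, and $A \subset qMq$ is amenable relative to $B$ with $P := \cN_{qMq}(A)\dpr$, then either $\bim{qMq}{q\cK^\eta}{M}$ is left $P$-amenable or there exist $t,\delta>0$ such that $\|\psi_t(a)\|_2 \geq \delta$ for all $a \in \cU(A)$. The proof follows the existing blueprint verbatim, modulo cosmetic adjustments. Lemma \ref{lem.reduction} goes through because $\Delta(q) = q \ot 1$ is still of finite trace in the semifinite $M \ovt \rL(\Gamma)$; Theorem \ref{thm.omn} must be reformulated so as to yield normal \emph{positive functionals} $\om_i \in (\pi(q)\cN\pi(q))_*$ rather than states on all of $\cN$, with $\om_i(\pi(x)) \recht \tau(x)$ for $x \in qMq$, with $\om_i(\pi(a)\theta(\abar)) \recht 1$ for $a \in \cU(A)$, and with the appropriate almost-invariance under $\cN_{qMq}(A)$; the vector-theoretic arguments of Section \ref{sec.proofsource} then carry over because the canonical positive vectors $\xi_n$ implementing the $\om_i$ have finite norm (as $\tau(q)<\infty$) and because the malleable deformation $(V_t,\al_t)$ is defined on $\cNtil$ regardless of whether $N$ is tracial or merely semifinite. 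These adaptations parallel exactly those outlined in Remark \ref{rem.generalq}; no step of the proof actually uses normalization of $B$, only of the corner $qMq$.

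With this semifinite analogue in hand, the theorem follows exactly as Theorem \ref{thm.main} is deduced from Theorem \ref{thm.source}: in the first alternative, the weak containment of $\eta$ in the regular representation together with Corollary \ref{cor.weakcontainment}, condition~5 of Proposition \ref{prop.char}, and the preliminary observation yields that $P$ is amenable; in the second alternative, the Fourier-decomposition and properness argument at the end of the proof of Theorem \ref{thm.main}, using the explicit identity $\|\psi_t(x)\|_2^2 = \sum_{g\in\Gamma} \exp(-2t\|c(g)\|^2)\|x_g\|_2^2$, forces $A \prec_M B$, which for $A \subset qMq$ with $q \in B$ is equivalent to $A \prec_{qMq} Bq$. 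The principal technical obstacle is verifying carefully that the weak-amenability construction of Theorem \ref{thm.omn} and the vector manipulations of Section \ref{sec.proofsource} genuinely produce \emph{finite} positive functionals on $\pi(q)\cN\pi(q)$ in the semifinite setting; because all the relevant data ($A$, $P$, $\cN_{qMq}(A)$, $\tau|_{qMq}$) naturally lives inside a finite corner, this is a cut-down bookkeeping exercise rather than a conceptual modification, but must be tracked step by step.
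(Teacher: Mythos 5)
Your proposal takes a genuinely different route from the paper, and the difference is worth articulating because the paper's route is considerably more economical. You propose to develop a semifinite analogue of Theorem \ref{thm.source}, arguing that the machinery in Theorem \ref{thm.omn} and Section \ref{sec.proofsource} extends once one carefully tracks that the relevant functionals and vectors live inside the finite corner $\pi(q)\cN\pi(q)$. While this is plausible in spirit, it would in fact require re-examining not just the proof of Theorem \ref{thm.source} but also the supporting framework: the definition of left $P$-amenability (Definition \ref{def.left-amenable}), the characterizations of Proposition \ref{prop.char} and Corollary \ref{cor.passage-rel-amen}, and the Connes-tensor-product constructions of Section \ref{sec.connesproduct} are all formulated for tracial finite von Neumann algebras, and the right-hand algebra in your bimodule $\bim{qMq}{q\cK^\eta}{M}$ would now be semifinite. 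You acknowledge the bookkeeping but do not carry it out, so the proposal as it stands leaves several nontrivial extensions unverified.

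The paper avoids this entirely by redeploying the comultiplication trick from Lemma \ref{lem.reduction} one more time. One defines $\Delta : M \recht M \ovt \rL(\Gamma)$, $\Delta(b u_g) = b u_g \ot u_g$, and observes that $\Delta(A) \subset qMq \ovt \rL(\Gamma)$. Since $\tau(q) < \infty$, the algebra $qMq$ is a genuine tracial von Neumann algebra, and $qMq \ovt \rL(\Gamma)$ is precisely the crossed product of $\Gamma$ acting \emph{trivially} on $qMq$. One then applies the already-proven, finite-case Theorem \ref{thm.main} with $B := qMq$ and this trivial action; transporting the conclusion back through $\Delta$ (exactly as in Lemma \ref{lem.reduction}) yields either $A \prec_{qMq} Bq$ or that $P$ is amenable relative to $Bq$, and since $Bq$ is abelian the latter gives plain amenability of $P$. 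No semifinite version of anything is needed. The advantage you would gain from a full semifinite extension of Theorem \ref{thm.source} is not exploited here, and the cost of developing it is substantial; you should recognize that the $\Delta$-reduction you already cite for a different purpose in Step 1 of the outline does double duty and collapses the semifinite case onto a finite one.
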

\begin{proof}
Choose a measure preserving action $\Gamma \actson (X,\mu)$. Put $B = \rL^\infty(X)$ and let $q \in B$ be a projection of finite measure. Put $M = B \rtimes \Gamma$ and let $A \subset q M q$ be an abelian von Neumann subalgebra. Denote by $P:= \cN_{qMq}(A)\dpr$ the normalizer of $A$ inside $qMq$. Define the normal $*$-homomorphism
$$\Delta : M \recht M \ovt \rL(\Gamma) : \Delta(b u_g) = b u_g \ot u_g \quad\text{for all}\;\; b \in B, g \in \Gamma \; .$$
So $\Delta(A)$ is an abelian von Neumann subalgebra of $qMq \ovt \rL(\Gamma)$. Since $qMq$ has a finite trace, we can apply Theorem \ref{thm.main} with $B = qMq$ and $\Gamma \actson B$ the trivial action. This means that either $\Delta(A) \prec_{qMq \ovt \rL(\Gamma)} qMq \ot 1$ or that $\Delta(P)$ is amenable relative to $qMq \ot 1$. With exactly the same argument as in the proof of Lemma \ref{lem.reduction}, it follows that either $A \prec_{qMq} Bq$ or that $P$ is amenable relative to $Bq$, which implies that $P$ is plainly amenable.
\end{proof}

\begin{proposition}
If $\Gamma$ is a countable group satisfying property ($\ast$), then also all ME-subgroups of $\Gamma$ satisfy property ($\ast$).
\end{proposition}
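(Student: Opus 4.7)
The plan is to reduce property $(\ast)$ for $\Lambda$ to property $(\ast)$ for $\Gamma$ via the ME-coupling and a Morita equivalence of Cartan pairs, followed by a descent through a subalgebra. Let $(\Omega,m)$ be the coupling with fundamental domains $X_0 \subset \Omega$ for $\Lambda$ and $Y_0 \subset \Omega$ for $\Gamma$, where $m(Y_0) < \infty$. Given a measure preserving action $\Lambda \actson (Y,\nu)$, I would form $Z := \Omega \times Y$ with commuting free actions of $\Gamma$ (on the first factor) and $\Lambda$ (diagonally) and set $X := Z/\Lambda$ with its induced $\Gamma$-action. Write $M_\Lambda := \rL^\infty(Y) \rtimes \Lambda$, $M_\Gamma := \rL^\infty(X) \rtimes \Gamma$, and introduce the auxiliary algebra $\cN := \rL^\infty(Z) \rtimes (\Gamma \times \Lambda)$ with Cartan $\rL^\infty(Z)$. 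Since $X_0 \times Y$ is a fundamental domain for $\Lambda \actson Z$ and $Y_0 \times Y$ one for $\Gamma \actson Z$, direct computation yields full-projection corner isomorphisms $1_{X_0 \times Y} \cN 1_{X_0 \times Y} \cong M_\Gamma$ and $1_{Y_0 \times Y} \cN 1_{Y_0 \times Y} \cong M_W := \rL^\infty(Y_0 \times Y) \rtimes \Lambda$, where $\Lambda$ acts diagonally via its Zimmer-induced action on $Y_0$ and the given action on $Y$. These identifications carry $\rL^\infty(Z)$ onto the natural Cartan of each corner, so the pairs $(M_\Gamma, \rL^\infty(X))$ and $(M_W, \rL^\infty(Y_0 \times Y))$ are Morita equivalent as Cartan pairs. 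Property $(\ast)$ is formulated purely in terms of the normalizer and of intertwining into the Cartan, so it is invariant under such Morita equivalence and thus passes to $M_W$: for every finite-measure projection $q' \in \rL^\infty(Y_0 \times Y)$ and every abelian $B \subset q'M_W q'$, either $B \prec_{q'M_W q'} \rL^\infty(Y_0 \times Y)q'$ or $\cN_{q'M_W q'}(B)\dpr$ is amenable.

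Next, I would embed $M_\Lambda \hookrightarrow M_W$ via $f \mapsto 1_{Y_0} \ot f$ and $u_h \mapsto u_h$; the given finite-measure $q \in \rL^\infty(Y)$ then maps to $1_{Y_0} \ot q \in \rL^\infty(Y_0 \times Y)$, of finite $M_W$-trace $m(Y_0)\nu(\supp q)$. Applying the transferred dichotomy to $A$ inside the corner $(1_{Y_0} \ot q) M_W (1_{Y_0} \ot q)$, either $\cN_{(1_{Y_0} \ot q) M_W (1_{Y_0} \ot q)}(A)\dpr$ is amenable, in which case the smaller $\cN_{qM_\Lambda q}(A)\dpr$ is contained in it and hence also amenable; or $A \prec_{M_W} \rL^\infty(Y_0 \times Y)$, and the remaining task, the main technical obstacle, is to descend this to $A \prec_{M_\Lambda} \rL^\infty(Y)$.

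For the descent, I would rely on three basic facts about the inclusion $M_\Lambda \subset M_W$: there is a trace-preserving conditional expectation $E \colon M_W \to M_\Lambda$ given by integration over $Y_0$; one has $\rL^\infty(Y_0 \times Y) \cap M_\Lambda = \rL^\infty(Y)$; and $E_{\rL^\infty(Y_0 \times Y)}$ restricted to $M_\Lambda$ coincides with $E_{\rL^\infty(Y)}$. Combining this with the linear-span decomposition $M_W = (\rL^\infty(Y_0) \ot 1) \cdot M_\Lambda$ (each $(f \ot g) u_h$ factors as $(f \ot 1)(1 \ot g) u_h$), a short Fourier expansion over $\Lambda$ yields
\begin{equation*}
E_{\rL^\infty(Y_0 \times Y)}\bigl( (f \ot 1)\, m\, u\, n^*\, (f' \ot 1)^* \bigr) \;=\; f \overline{f'} \ot E_{\rL^\infty(Y)}(m u n^*)
\end{equation*}
for $m,n \in M_\Lambda$, $f,f' \in \rL^\infty(Y_0)$ and $u \in A$. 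Approximating arbitrary $a,b \in M_W$ in $\|\,\cdot\,\|_2$ by finite sums of such elementary products, and invoking Popa's intertwining criterion via nets of unitaries, one sees that $A \not\prec_{M_\Lambda} \rL^\infty(Y)$ would force $A \not\prec_{M_W} \rL^\infty(Y_0 \times Y)$, contradicting the case under consideration and completing the dichotomy for $\Lambda$.
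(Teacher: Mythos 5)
Your overall architecture is close to the paper's: you route through the auxiliary algebra $\rL^\infty(Y_0 \times Y) \rtimes \Lambda$ (where $\Lambda$ acts diagonally via the Zimmer-induced action on $Y_0=\Omega/\Gamma$ and the given action on $Y$), and the final ``descent'' from there down to $\rL^\infty(Y)\rtimes\Lambda$ is exactly the step the paper also performs silently at the end of its Part~3. Your Fourier computation
$E_{\rL^\infty(Y_0 \times Y)}( (f\ot1)\,\iota(m u n^*)\,(\overline{f'}\ot1) ) = f\overline{f'}\ot E_{\rL^\infty(Y)}(mun^*)$ is correct (the $h=e$ Fourier mode picks up no twist from the induced action), and, once you restrict to the finite corner $(1_{Y_0}\ot q)M_W(1_{Y_0}\ot q)$ so that Popa's intertwining criterion and the $\|\cdot\|_2$-norm make sense, the deduction that $A\not\prec_{M_\Lambda}\rL^\infty(Y)$ forces $\iota(A)\not\prec_{M_W}\rL^\infty(Y_0\times Y)$ goes through. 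This part of your proposal is sound.

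The genuine gap is the sentence ``Property $(\ast)$ is formulated purely in terms of the normalizer and of intertwining into the Cartan, so it is invariant under such Morita equivalence and thus passes to $M_W$.'' This is the crux of the theorem and it is not automatic; it is precisely what the paper spends its Parts~1 and~2 preparing for. Concretely: to transfer the dichotomy for a finite-measure projection $q'\le 1_{Y_0\times Y}$ into a corner of $1_{X_0\times Y}\cN 1_{X_0\times Y}=\rL^\infty(X_0\times Y)\rtimes\Gamma$, you need a partial isometry $v\in\cN$ normalizing $\rL^\infty(Z)$ with $v^*v=q'$ and $vv^*\le 1_{X_0\times Y}$. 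When $\Gamma$ and $\Lambda$ are actually measure equivalent with compression constant $t=m(\Omega/\Lambda)/m(\Omega/\Gamma)<1$ and $\nu(Y)<\infty$, there is no room: $1_{X_0\times Y}$ has strictly smaller central trace than $1_{Y_0\times Y}$, and a $q'$ close to $1_{Y_0\times Y}$ cannot be pushed under $1_{X_0\times Y}$ at all. The paper's Part~2 (property $(\ast)$ passes from $\Gamma$ to $\Gamma\times G$ for $G$ finite, i.e.\ to $n\times n$ matrix amplifications of the Cartan pair, using Part~1 to reduce to free actions) exists precisely to make room: one replaces $\Gamma$ by $\Gamma\times\Z/n\Z$ so as to raise the compression constant above $1$ before attempting the transfer. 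Moreover the selection of $v$ requires working relative to the ergodic decomposition of $\cR(\Gamma\times\Lambda\actson\Omega\times Y)$, which is nontrivial because $\Lambda\actson Y$ is arbitrary; the paper sidesteps this by constructing an explicit cocycle $\omega:\Gamma\times Z\to\Lambda$ built from the (ergodic) coupling alone. As written, your proposal treats the Morita transfer as an axiom and therefore misses the amplification lemma and the ergodicity/freeness reductions that make it true.
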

\begin{proof}
{\bf Part 1~:} to establish property ($\ast$), it suffices to consider \emph{free} measure preserving actions $\Gamma \actson (X,\mu)$. Indeed, assume that property ($\ast$) holds for all \emph{free} measure preserving actions of $\Gamma$ and let $\Gamma \actson (X,\mu)$ be \emph{any} measure preserving action. Put $M = \rL^\infty(X) \rtimes \Gamma$. Assume that $q \in \rL^\infty(X)$ is a projection of finite measure and that $A \subset q M q$ is an abelian von Neumann subalgebra. We have to prove that either $A \prec_{qMq} \rL^\infty(X) q$ or that $\cN_{qMq}(A)\dpr$ is amenable.

Let $\Gamma \actson Y$ be any free pmp action, e.g.\ a Bernoulli action. Then the diagonal action $\Gamma \actson Y \times X$ is free. Put $\Mtil = \rL^\infty(Y \times X) \rtimes \Gamma$ and view $M \subset \Mtil$ in the obvious way. Then $\qtil = 1 \ot q$ is a projection of finite measure and we can view $A$ as a subalgebra of $\qtil \Mtil \qtil$. Since property ($\ast$) holds for free actions we have that either $A \prec_{\qtil \Mtil \qtil} \rL^\infty(Y \times X)\qtil$ or that $\cN_{\qtil \Mtil \qtil}(A)\dpr$ is amenable. In the first case, it follows that $A \prec_{q M q} \rL^\infty(X)q$, while in the second case also the subalgebra $\cN_{qMq}(A)\dpr$ of $\cN_{\qtil \Mtil \qtil}(A)\dpr$ is amenable. This ends the proof of part 1.

{\bf Part 2~:} if $\Gamma$ has property ($\ast$), then $\Gamma \times G$ has property ($\ast$) for every finite group $G$. By part 1, it suffices to consider free measure preserving actions $\Gamma \times G \actson Y$. Then $\rL^\infty(Y) \rtimes (\Gamma \times G)$ is isomorphic with $(\rL^\infty(X) \rtimes \Gamma)^n$ where $n = |G|$, $X = Y / G$ and where we use the notation $Q^n := \M_n(\C) \ot Q$. Moreover, under this isomorphism $\rL^\infty(Y)$ corresponds to $\D_n(\C) \ot \rL^\infty(X)$, where $\D_n(\C) \subset \M_n(\C)$ denotes the subalgebra of diagonal matrices. So take a free measure preserving action $\Gamma \actson (X,\mu)$, write $B = \rL^\infty(X)$ and take an integer $n$ and a projection of finite measure $q \in \D_n(\C) \ot B$. Write $M := B \rtimes \Gamma$ and assume that $A \subset q M^n q$ is an abelian von Neumann subalgebra. Assume that $A \not\prec_{qM^n q} qB^n q$. Denote $P := \cN_{q M^n q}(A)\dpr$. We must prove that $P$ is amenable.

Denote by $D \subset \rL^\infty(X)$ the subalgebra of $\Gamma$-invariant functions. Since $\Gamma \actson X$ is free, we have that $D = \cZ(M)$ and $(1 \ot D)q = \cZ(q M^n q)$. Denote $\Atil = A \vee (1 \ot D)q$. Obviously $\Atil$ is abelian and $\Atil \not\prec_{qM^n q} qB^n q$. Denote $\Ptil := \cN_{q M^n q}(\Atil)\dpr$.
Every unitary $u \in \cU(q M^n q)$ that normalizes $A$, commutes with $(1 \ot D)q$ and hence, also normalizes $\Atil$. So $P \subset \Ptil$.

Since $q \in \D_n(\C) \ot B$, we write $q = \sum_{i=1}^n e_{ii} \ot q_i$, where $q_i \in B$ are projections of finite measure. We claim that there exist orthogonal projections $p_i \in \Atil$, with sum $q$, such that, inside $q M^n q$, the projections $p_i$ and $e_{ii} \ot q_i$ are equivalent for all $i=1,\ldots,n$. To prove this claim, it suffices to show that ``$\Atil$ is diffuse over the center $(1 \ot D)q$'', i.e.\ it suffices to show that there is no nonzero projection $p \in \Atil$ such that $\Atil p = (1 \ot D) p$. This follows immediately since $(1 \ot D)q \subset q B^n q$ and since we assumed that $\Atil \not\prec q B^n q$.

By the claim in the previous paragraph, we can take partial isometries $v_1,\ldots,v_n \in \M_{1,n}(\C) \ot M$ such that $v_i v_i^* = q_i$ and such that $v_i^* v_i = p_i$ where the $p_i$ are orthogonal projections in $\Atil$ with sum $q$. Define $A_i := v_i \Atil v_i^*$ and $P_i := v_i \Ptil v_i^*$. By \cite[Lemma 3.5]{Po03}, $P_i$ is the normalizer of $A_i$ inside $q_i M q_i$. Since $\Atil \not\prec q B^n q$, we also have that $A_i \not\prec_{q_i M q_i} B q_i$. Since property ($\ast$) holds for $\Gamma$, it follows that $P_i$ is amenable for every $i$. Hence, $p_i \Ptil p_i$ is amenable for every $i$. Since $\sum_{i=1}^n p_i = q$ and $q$ is the unit of $\Ptil$, it follows that $\Ptil$ is amenable. Because $P \subset \Ptil$, this concludes the proof of part 2.

{\bf Part 3~:} property ($\ast$) is stable under ME-subgroups. Assume that $\Gamma$ satisfies property ($\ast$) and that $\Lambda$ is an ME-subgroup of $\Gamma$. Take a measure preserving action $\Gamma \times \Lambda \actson (\Om,m)$ such that the actions $\Gamma \actson \Om$ and $\Lambda \actson \Om$ are free and admit a measurable fundamental domain, with the fundamental domain of $\Gamma \actson \Om$ having finite measure. Taking the diagonal product of $\Gamma \times \Lambda \actson \Om$ with a free pmp action of $\Gamma \times \Lambda$, we may assume that $\Gamma \times \Lambda \actson \Om$ is free. Choosing an ergodic component, we may further assume that $\Gamma \times \Lambda \actson \Om$ is ergodic. Put $Z = \Om/\Lambda$, $Y = \Om / \Gamma$ and consider the natural measure preserving actions $\Gamma \actson Z$ and $\Lambda \actson Y$, with the measure on $Y$ being finite. Note that both actions are free and ergodic.

As in \cite[Lemma 3.2 and Theorem 3.3]{Fu98} the free ergodic measure preserving actions $\Gamma \actson Z$ and $\Lambda \actson Y$ are by construction stably orbit equivalent. Denote by $t = m(Z)/m(Y)$ the compression constant of this stable orbit equivalence, where by convention $t = +\infty$ if $Z$ has infinite measure. If $t < 1$, we replace $\Gamma \actson Z$ by $\Gamma \times \Z/n\Z \actson Z \times \Z/n\Z$ for $n$ large enough such that $1/n \leq t$. By part 2 of the proof, $\Gamma \times \Z/n\Z$ still has property ($\ast$). So we may assume that $t \geq 1$. This means that we can find a subset $Z_0 \subset Z$ of finite measure and a measure scaling isomorphism $\theta : Z_0 \recht Y$ such that $\theta(Z_0 \cap \Gamma \cdot z) = \Lambda \cdot \theta(z)$ for a.e.\ $z \in Z_0$.

Since $\Gamma \actson Z$ is ergodic and $Z_0 \subset Z$ is non-negligible, we can choose a measurable map $p : Z \recht Z_0$ such that $p(z) = z$ for a.e.\ $z \in Z_0$ and $p(z) \in \Gamma \cdot z$ for a.e.\ $z \in Z$. Denote by $\om : \Gamma \times Z \recht \Lambda$ the $1$-cocycle for the action $\Gamma \actson Z$ with values in $\Lambda$ determined by
$$\theta(p(g \cdot z)) = \om(g,z) \cdot \theta(p(z)) \quad\text{for all}\;\; g \in \Gamma \;\;\text{and a.e.}\;\; z \in Z \; .$$

Let $\Lambda \actson (X,\mu)$ be any measure preserving action on a standard measure space $(X,\mu)$. Put $B = \rL^\infty(X)$ and $M = B \rtimes \Lambda$. Let $q \in B$ be a projection of finite measure. Assume that $A \subset q M q$ is an abelian von Neumann subalgebra. We have to prove that either $A \prec_{qMq} Bq$ or that the normalizer $\cN_{qMq}(A)\dpr$ is amenable.

Define the free measure preserving action $\Gamma \actson Z \times X$ given by $g \cdot (z,x) = (g \cdot z, \om(g,z) \cdot x)$. Put $\Btil := \rL^\infty(Z \times X)$ and $\Mtil := \Btil \rtimes \Gamma$. We write $p = \chi_{Z_0} \in \rL^\infty(Z)$. By construction, the restriction of the orbit equivalence relation of $\Gamma \actson Z \times X$ to the subset $Z_0 \times X$ is isomorphic, through $\theta \times \id$, with the orbit equivalence relation of the diagonal action $\Lambda \actson Y \times X$. So we find an isomorphism of von Neumann algebras
$$\Psi : (p \ot 1) \Mtil (p \ot 1) \recht \rL^\infty(Y \times X) \rtimes \Lambda$$
satisfying $\Psi(F) = F \circ \theta^{-1}$ for all $F \in \rL^\infty(Z_0 \times X)$. In particular, $\Psi^{-1}(1 \ot q) = p \ot q$. Note that $p \ot q$ is a projection of finite measure in $\Btil$. Put $\Atil := \Psi^{-1}(1 \ot A)$ and note that $\Atil$ is an abelian von Neumann subalgebra of $(p \ot q)\Mtil(p\ot q)$.
Since $\Gamma$ has property ($\ast$), we conclude that either $\Atil$ embeds into $(p \ot q) \Btil (p \ot q)$ inside $(p \ot q)\Mtil (p \ot q)$, or that $\Atil$ has an amenable normalizer inside $(p \ot q)\Mtil (p \ot q)$. Transporting back with $\Psi$, we get that either $1 \ot A$ embeds into $\rL^\infty(Y \times X)(1 \ot q)$ inside $(1 \ot q)(\rL^\infty(Y \times X) \rtimes \Lambda)(1 \ot q)$, or that the normalizer of $1 \ot A$ is amenable. In the first case, it follows that $A$ embeds into $\rL^\infty(X) q$ inside $qMq$. In the second case, we get that $\cN_{qMq}(A)\dpr$ is amenable.
\end{proof}

\section{\boldmath Applications to W$^*$-superrigidity and classification results} \label{sec.superrigid}

We start this section by proving Theorem \ref{thm.class}.

\begin{proof}[Proof of Theorem \ref{thm.class}]
1.\ If $\rL^\infty(X) \rtimes \F_n \cong \rL^\infty(Y) \rtimes \F_m$, it follows from Theorem \ref{thm.Crigidgroups} that the free ergodic pmp actions $\F_n \actson X$ and $\F_m \actson Y$ are orbit equivalent. It then follows from \cite[Th\'{e}or\`{e}me 3.2]{Ga01} that $n = m$.

2.\ In one direction the isomorphism of the II$_1$ factors together with Theorem \ref{thm.Crigidgroups} implies that the actions $\F_n \actson X_0^{\F_n}$ and $\F_m \actson Y_0^{\F_m}$ are stably orbit equivalent with compression constant $s/t$. By \cite[Th\'{e}or\`{e}me 6.3]{Ga01} we get that $(n-1)/(m-1) = s/t$. Conversely assume that $(n-1)/s = (m-1)/t$. Combining \cite[Corollary 1.2]{Bo09a} and \cite[Theorem 1.1]{Bo09b} we know that the actions $\F_n \actson X_0^{\F_n}$ and $\F_m \actson Y_0^{\F_m}$ are stably orbit equivalent with compression constant $(n-1)/(m-1) = s/t$. Hence the crossed product II$_1$ factors are stably isomorphic with amplification constant $s/t$. The result applies in particular to $\rL(\Z \wr \F_n) \cong \rL^\infty\bigl([0,1]^{\F_n}\bigr) \rtimes \F_n$.

3.\ Assume that $\cR_1$ is a treeable countable ergodic pmp equivalence relation and that $\rL \cR_1 \cong \rL \cR_2$ for another pmp equivalence relation $\cR_2$. Let $c \in [1,+\infty]$ be the cost of $\cR_1$. If $c=1$, it follows that $\cR_1$ is amenable. Hence also $\rL \cR_1 \cong \rL \cR_2$ is amenable, so that $\cR_2$ is amenable. So $\cR_1 \cong \cR_2$. If $c \in (1,+\infty]$, take $s > 0$ such that $n:=(c-1)/s$ is a positive integer or $+\infty$. By \cite[Proposition 2.6]{Ga99} the amplification $\cR_1^s$ is treeable with cost $n+1$. By \cite[Corollary 1.2]{Hj05} the equivalence relation $\cR_1^s$ can be implemented by a free action of $\F_{n+1}$. This implies that $\rL(\cR_1^s) = \rL^\infty(Z) \rtimes \F_{n+1}$ for some free ergodic pmp action $\F_{n+1} \actson Z$. Since $\rL(\cR_1^s) \cong \rL(\cR_2^s)$, it follows from Theorem \ref{thm.Crigidgroups} that $\cR_1^s \cong \cR_2^s$, i.e.\ that $\cR_1 \cong \cR_2$.
\end{proof}

As in \cite[Definition 6.1]{PV09} a free ergodic pmp action $\Gamma \actson (X,\mu)$ is called \emph{W$^*$-superrigid} if the following property holds: whenever $\Lambda \actson (Y,\eta)$ is another free ergodic pmp action and $\Theta : \rL^\infty(X) \rtimes \Gamma \recht \rL^\infty(Y) \rtimes \Lambda$ is an isomorphism, the groups $\Gamma$ and $\Lambda$ must be isomorphic, their actions must be conjugate and $\Theta$ is implemented by this conjugacy. More precisely, we find an isomorphism of groups $\delta : \Gamma \recht \Lambda$ and an isomorphism of probability spaces $\Delta : X \recht Y$ such that
\begin{itemize}
\item $\Delta(g \cdot x) = \delta(g) \cdot \Delta(x)$ for all $g \in \Gamma$ and a.e.\ $x \in X$,
\item $U \Theta(a u_g) U^* = \Delta_*(a \om_g) u_{\delta(g)}$ for all $a \in \rL^\infty(X)$ and all $g \in \Gamma$, where $U \in \rL^\infty(Y) \rtimes \Lambda$ is a unitary and $(\om_g)_{g \in \Gamma}$ is a family of unitaries in $\rL^\infty(X)$ defining a $1$-cocycle for $\Gamma \actson X$ with values in $\T$.
\end{itemize}

To formulate the next theorem recall that a pmp action $\Gamma \actson (X,\mu)$ is said to be a \emph{quotient} (or factor) of the pmp action $\Gamma \actson (Y,\eta)$ if there exists a measure preserving map $p : Y \recht X$ such that $p(g \cdot y) = g \cdot p(y)$ for all $g \in \Gamma$ and a.e.\ $y \in Y$. Also recall that a group is said to be \emph{icc} if it has infinite conjugacy classes. Finally recall that a subgroup $\Lambda < \Gamma$ is called co-amenable if $\Gamma / \Lambda$ admits a $\Gamma$-invariant mean. By \cite[Proposition 6]{MP03} a subgroup $\Lambda < \Gamma$ is co-amenable if and only if the subalgebra $\rL(\Lambda) \subset \rL(\Gamma)$ is co-amenable in the sense explained in Section \ref{sec.rel-amen}.

\begin{theorem}\label{thm.superrigid-quotient-bernoulli}
Let $\Gamma_1,\Gamma_2$ be icc weakly amenable groups that admit a proper $1$-cocycle into a nonamenable representation. Put $\Gamma = \Gamma_1 \times \Gamma_2$. Let $\Gamma \actson I$ be a transitive action and $i_0 \in I$. Assume that
\begin{itemize}
\item $\Gamma_1 \cap \Stab i_0 < \Gamma_1$ is not co-amenable,
\item $\Gamma_2 \cap \Stab i_0 < \Gamma_2$ is not of finite index.
\end{itemize}
Then any free ergodic pmp action $\Gamma \actson (X,\mu)$ that arises as a quotient of the generalized Bernoulli action $\Gamma \actson [0,1]^I$ is W$^*$-superrigid.
\end{theorem}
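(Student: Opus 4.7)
The plan is to combine the unique Cartan decomposition from Theorem \ref{thm.Crigidgroups} with Popa's OE-superrigidity for (quotients of) generalized Bernoulli actions of product groups from \cite{Po05,Po06b}.

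First I would verify that $\Gamma = \Gamma_1 \times \Gamma_2$ itself falls into the second family of Theorem \ref{thm.Crigidgroups}. A direct product of weakly amenable groups is weakly amenable, since tensoring Herz--Schur multipliers $f_n^{(i)}$ on $\Gamma_i$ produces finitely supported multipliers $f_n^{(1)} \otimes f_n^{(2)}$ on $\Gamma$ with the correct cb-norm bounds. Given proper $1$-cocycles $c_i : \Gamma_i \recht (K_i)_\R$ into nonamenable orthogonal representations $\eta_i$, the direct sum $c := c_1 \oplus c_2$ is a proper $1$-cocycle for $\Gamma$ into $\eta := \eta_1 \oplus \eta_2$, and $\eta$ remains nonamenable (an $\Ad \eta$-invariant state on $\B(K_1 \oplus K_2)$ would restrict via the corner projection to an $\Ad \eta_1$-invariant state on $\B(K_1)$). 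Hence $\Gamma$ is $\cC$-rigid, so $\rL^\infty(X)$ is the unique Cartan subalgebra of $M := \rL^\infty(X) \rtimes \Gamma$ up to unitary conjugacy.

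Next, given an isomorphism $\Theta : M \recht \rL^\infty(Y) \rtimes \Lambda$ coming from another free ergodic pmp action $\Lambda \actson (Y,\eta)$, I would observe that $\Theta^{-1}(\rL^\infty(Y))$ is also a Cartan subalgebra of $M$, hence unitarily conjugate to $\rL^\infty(X)$ inside $M$. The Feldman--Moore correspondence then produces an orbit equivalence $\Phi : X \recht Y$ between $\Gamma \actson X$ and $\Lambda \actson Y$, together with a scalar $\T$-valued $1$-cocycle $(\om_g)_{g \in \Gamma}$ in $\rL^\infty(X)$ describing $\Theta$ on the canonical unitaries.

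The final step is to invoke Popa's cocycle superrigidity theorem for quotients of generalized Bernoulli actions of product groups from \cite{Po05,Po06b}. The stabilizer hypotheses in the statement are exactly what is needed to guarantee that the $\Gamma_1$-representation on $\ell^2(I) \ominus \C \delta_{i_0}$ has spectral gap (via the non-co-amenability of $\Gamma_1 \cap \Stab i_0$) and that $\Gamma_2$ acts with all orbits infinite on $I$ (so the Gaussian s-malleable deformation of $\Gamma \actson [0,1]^I$ can be untwisted on the $\Gamma_2$-side). This cocycle superrigidity survives passage to quotients, and, combined with the icc hypothesis on $\Gamma_1$ and $\Gamma_2$ (to trivialize characters coming from $\Ad$-corrections), it forces the orbit equivalence $\Phi$ to be conjugate to a genuine isomorphism $\delta : \Gamma \recht \Lambda$ together with a measure-preserving $\Delta : X \recht Y$ satisfying $\Delta(g \cdot x) = \delta(g) \cdot \Delta(x)$. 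Reassembling with the $\T$-valued cocycle $(\om_g)$ gives the explicit form of $\Theta$ demanded in the definition of W$^*$-superrigidity. The main obstacle will be the third step: one must check carefully that the stated stabilizer conditions, together with the icc + weakly amenable + proper nonamenable cocycle hypotheses on the $\Gamma_i$, really are sufficient input to the appropriate version of Popa's OE-superrigidity theorem and its quotient-stable form; steps one and two are essentially formal consequences of Theorem \ref{thm.Crigidgroups}.
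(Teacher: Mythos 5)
Your proposal follows the paper's own argument: unique Cartan decomposition (Theorem~\ref{thm.Crigidgroups}, using that the hypotheses pass to direct products) reduces W$^*$-superrigidity of $\Gamma \actson X$ to its OE-superrigidity, which is then derived from Popa's cocycle superrigidity \cite[Theorem~1.1]{Po06b} for the s-malleable Gaussian deformation (s-malleability from \cite[Theorem~1.2]{Fu06}) combined with \cite[Theorem~5.6]{Po05} for weakly mixing quotients of icc groups. The paper packages this by first proving a Gaussian version (Theorem~\ref{thm.superrigid-gaussian}) and then translating the stabilizer hypotheses exactly as you indicate: non-co-amenability of $\Gamma_1 \cap \Stab i_0$ gives nonamenability of $\pi_{|\Gamma_1}$ (hence stable spectral gap), and infinite index of $\Gamma_2 \cap \Stab i_0$ gives weak mixing of $\pi_{|\Gamma_2}$.
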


Theorem \ref{thm.superrigid-quotient-bernoulli} will be a consequence of the following similar result for quotients of a Gaussian action $\Gamma \actson (Y_\pi,\mu_\pi)$ associated with an orthogonal representation $\pi$ of $\Gamma$.

\begin{theorem}\label{thm.superrigid-gaussian}
Let $\Gamma_1,\Gamma_2$ be icc weakly amenable groups that admit a proper $1$-cocycle into a nonamenable representation. Put $\Gamma = \Gamma_1 \times \Gamma_2$. Let $\pi : \Gamma \recht \cO(K_\R)$ be any orthogonal representation with corresponding Gaussian action $\Gamma \actson (Y_\pi,\mu_\pi)$. Assume that
\begin{itemize}
\item $\pi_{|\Gamma_1}$ is a nonamenable representation,
\item $\pi_{|\Gamma_2}$ is a weakly mixing representation, i.e.\ a representation without nonzero finite dimensional invariant subspaces.
\end{itemize}
Then any free ergodic pmp action $\Gamma \actson (X,\mu)$ that arises as a quotient of the Gaussian action $\Gamma \actson (Y_\pi,\mu_\pi)$ is W$^*$-superrigid.
\end{theorem}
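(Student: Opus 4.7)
The plan is to combine the unique Cartan decomposition theorems of this paper with Popa's cocycle superrigidity for Gaussian actions of product groups. Suppose $\Theta : \rL^\infty(X) \rtimes \Gamma \recht \rL^\infty(Z) \rtimes \Lambda$ is an isomorphism onto another group measure space construction from some free ergodic pmp action $\Lambda \actson (Z,\eta)$; the goal is to produce the data $(\delta,\Delta,(\om_g))$ required by the definition of W$^*$-superrigidity.

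First I will use $\cC$-rigidity to reduce to orbit equivalence. Since each $\Gamma_i$ is nonamenable, weakly amenable, and admits a proper $1$-cocycle into a nonamenable representation, the remark following Theorem \ref{thm.main-products} ensures that $\Gamma = \Gamma_1 \times \Gamma_2$ is $\cC$-rigid. Both $\Theta(\rL^\infty(X))$ and $\rL^\infty(Z)$ are Cartan subalgebras of $\rL^\infty(Z) \rtimes \Lambda$, so we find a unitary $U$ conjugating them; replacing $\Theta$ by $\Ad(U^*) \circ \Theta$ we may assume $\Theta(\rL^\infty(X)) = \rL^\infty(Z)$. By Singer's theorem, $\Theta$ then induces an orbit equivalence $\Delta : X \recht Z$ of the two actions with an associated Zimmer $1$-cocycle $\om : \Gamma \times X \recht \Lambda$ satisfying $\Delta(g \cdot x) = \om(g,x) \cdot \Delta(x)$, together with a scalar $1$-cocycle $(\om_g)_{g \in \Gamma}$ in $\cU(\rL^\infty(X))$ encoding the action of $\Theta$ on the canonical unitaries $u_g$.

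Next I will apply Popa's $\Ufin$-cocycle superrigidity theorem for $s$-malleable Gaussian actions from \cite{Po06b}. Under the hypotheses that $\pi|_{\Gamma_1}$ is nonamenable (providing the spectral gap needed to kill the $s$-malleable deformation on the $\Gamma_1$-side) and $\pi|_{\Gamma_2}$ is weakly mixing (providing uniqueness of the untwisting on the $\Gamma_2$-side), any $1$-cocycle from $\Gamma \times Y_\pi$ into a Polish group of class $\Ufin$, in particular into the countable group $\Lambda$, is cohomologous to a group homomorphism. Since $\Gamma \actson X$ is a quotient of $\Gamma \actson Y_\pi$, I lift $\om$ along the quotient map $p : Y_\pi \recht X$ to a cocycle $\om \circ (\id \times p)$; cocycle superrigidity on $Y_\pi$ produces a homomorphism $\delta : \Gamma \recht \Lambda$ and a measurable map $\vphi_0 : Y_\pi \recht \Lambda$ untwisting the lifted cocycle. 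The weak mixing of $\pi|_{\Gamma_2}$ then forces $\vphi_0$ to descend to a map $\vphi : X \recht \Lambda$, since the lifted cocycle is constant on $p$-fibers and a $\Gamma_2$-equivariant $\Lambda$-valued function on a weakly mixing extension must itself be fiber-constant. Replacing $\Delta$ by $x \mapsto \vphi(x)^{-1} \cdot \Delta(x)$ gives $\Delta(g \cdot x) = \delta(g) \cdot \Delta(x)$ for all $g \in \Gamma$.

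Finally, freeness of $\Gamma \actson X$ forces $\delta$ to be injective; ergodicity of $\Gamma \actson X$ together with freeness of $\Lambda \actson Z$ forces $\Delta$ to be essentially surjective and $\delta$ to be surjective, so $\delta : \Gamma \recht \Lambda$ is an isomorphism and $\Delta : X \recht Z$ is a measure-preserving conjugacy through $\delta$. Reading off the scalar twist $(\om_g)$ relative to this modified $\Delta$, a direct computation yields $U \Theta(a u_g) U^* = \Delta_*(a \om_g) u_{\delta(g)}$, which is precisely the content of W$^*$-superrigidity. I expect the main obstacle to lie in the descent step for cocycle superrigidity from $Y_\pi$ to the quotient $X$: proving that $\vphi_0$ is measurable with respect to the sub-$\sigma$-algebra pulled back from $X$ relies essentially on the weak mixing of $\pi|_{\Gamma_2}$ applied to the fibers of $p$, and such descent arguments have historically been the most delicate ingredient in W$^*$-superrigidity results.
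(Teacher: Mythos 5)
Your outline follows the paper's strategy in broad strokes: use $\cC$-rigidity from Theorem \ref{thm.Crigidgroups} to reduce W$^*$-superrigidity to OE superrigidity, then invoke Popa's $\Ufin$-cocycle superrigidity for $s$-malleable Gaussian actions of product groups \cite{Po06b}. The difference is in the descent from $Y_\pi$ to the quotient $X$: the paper simply cites \cite[Theorem 5.6]{Po05}, which combines cocycle superrigidity of the ambient weakly mixing action with the icc property of $\Gamma$ to yield OE superrigidity of any free quotient. You attempt to reprove that descent by hand, and that is where the gap lies.

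Your justification that the untwisting map $\vphi_0$ descends to $X$ rests on the claim that ``a $\Gamma_2$-equivariant $\Lambda$-valued function on a weakly mixing extension must be fiber-constant.'' This conflates weak mixing of the action $\Gamma \actson Y_\pi$ (which does hold, since $\Gamma_2 \actson Y_\pi$ is weakly mixing) with relative weak mixing of the extension $Y_\pi \recht X$, which is \emph{not} assumed and in general fails for an arbitrary free ergodic quotient $X$. For instance, $X$ could be the quotient by a compact group acting diagonally, in which case $Y_\pi \times_X Y_\pi$ decomposes and $\Gamma \actson Y_\pi \times_X Y_\pi$ is very far from ergodic. Moreover, $\vphi_0$ is not $\Gamma_2$-equivariant; it satisfies a twisted cocycle identity. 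The correct argument (which is what is packaged inside \cite[Theorem 5.6]{Po05}) considers the map $F : Y_\pi \times_X Y_\pi \recht \Lambda$, $F(y_1,y_2) = \vphi_0(y_1)\vphi_0(y_2)^{-1}$, which satisfies $F(gy_1,gy_2) = \delta(g) F(y_1,y_2) \delta(g)^{-1}$ because the lifted cocycle is fiber-constant. One then pushes forward the fiber-product measure to a $\delta(\Gamma)$-conjugation-invariant probability measure on $\Lambda$ and uses the icc hypothesis on $\Gamma$ to force this measure to concentrate at $e$. Your argument never invokes the icc assumption on $\Gamma_1,\Gamma_2$, yet it appears explicitly in the statement and is cited in the paper's one-line application of \cite[Theorem 5.6]{Po05}; without it the descent fails. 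So either replace the hand-made descent by the citation, or carry out the argument above with icc playing its proper role.
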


\begin{remark}\label{rem.superrigid}
Theorem \ref{thm.superrigid-quotient-bernoulli} provides large new families of W$^*$-superrigid actions.
\begin{itemize}
\item In \cite[Theorem A]{Io10} it was shown that a Bernoulli action $\Gamma \actson (X,\mu)$ is W$^*$-superrigid
whenever $\Gamma$ is an
icc property (T) group. In \cite[Theorem 10.1]{IPV10} the same was established when $\Gamma = \Gamma_1 \times \Gamma_2$ is a direct product of a nonamenable icc group $\Gamma_1$ and an infinite icc group $\Gamma_2$. The conditions on $\Gamma_1$, $\Gamma_2$ in Theorem \ref{thm.superrigid-quotient-bernoulli} are of course much stricter, but we now also get W$^*$-superrigidity for generalized Bernoulli actions and their quotients.

\item The following is an interesting class of generalized Bernoulli actions covered by Theorem \ref{thm.superrigid-quotient-bernoulli}. Assume that $\Gamma$ is an icc weakly amenable group that admits a proper $1$-cocycle into a nonamenable representation. Consider the left-right action of $\Gamma \times \Gamma$ on $I = \Gamma$. Since both $\Gamma \times \{e\}$ and $\{e\} \times \Gamma$ act freely on $\Gamma$, the conditions of Theorem \ref{thm.superrigid-quotient-bernoulli} are satisfied and it follows that all free quotient actions of $\Gamma \times \Gamma \actson [0,1]^\Gamma$ are W$^*$-superrigid.

\item Generalized Bernoulli actions typically admit a lot of nonconjugate quotient actions. Indeed, whenever $K$ is a second countable compact group, consider the diagonal action of $K$ on $K^I$ which commutes with the generalized Bernoulli action $\Gamma \actson K^I$. Then $\Gamma \actson K^I / K$ is a quotient action of $\Gamma \actson K^I$.

    When $\Gamma$ and its action $\Gamma \actson I$ satisfy the conditions of Theorem \ref{thm.superrigid-quotient-bernoulli}, we will see in the proof of Theorem \ref{thm.superrigid-gaussian} that $\Gamma \actson K^I$ is cocycle superrigid. Hence it follows from \cite[Lemma 5.2]{PV06} that varying $K$, the actions $\Gamma \actson K^I / K$ are nonconjugate for nonisomorphic compact groups $K$. So by Theorem \ref{thm.superrigid-quotient-bernoulli} also their crossed product II$_1$ factors are nonisomorphic when $K$ varies.

\item As mentioned above, in \cite[Theorem A]{Io10} it was shown that the Bernoulli action $\Gamma \actson (X,\mu)$ is W$^*$-superrigid for all icc property (T) groups $\Gamma$. Theorem \ref{thm.superrigid-quotient-bernoulli} does not cover property (T) groups, but in our forthcoming paper \cite{PV12}, we will cover generalized Bernoulli actions of hyperbolic property (T) groups, as well as all their quotient actions.
\end{itemize}
\end{remark}

\begin{proof}[Proof of Theorem \ref{thm.superrigid-gaussian}]
Let $\Gamma \actson (X,\mu)$ be a free ergodic pmp action that arises as the quotient of a Gaussian action $\Gamma \actson (Y_\pi,\mu_\pi)$ satisfying the assumptions in the theorem. Note that also $\Gamma = \Gamma_1 \times \Gamma_2$ is a weakly amenable group that admits a proper $1$-cocycle into a nonamenable representation. So because of Theorem \ref{thm.Crigidgroups} any isomorphism
$\Theta : \rL^\infty(X) \rtimes \Gamma \recht \rL^\infty(Y) \rtimes \Lambda$ with another group measure space construction satisfies, after a unitary conjugacy, $\Theta(\rL^\infty(X)) = \rL^\infty(Y)$. This means that $\Theta$ is given by a scalar $1$-cocycle (i.e.\ an automorphism of $\rL^\infty(X) \rtimes \Gamma$ that is the identity on $\rL^\infty(X)$) and an isomorphism coming from an orbit equivalence between $\Gamma \actson X$ and $\Lambda \actson Y$. It therefore only remains to argue that $\Gamma \actson (X,\mu)$ is OE superrigid, i.e.\ that this orbit equivalence between $\Gamma \actson X$ and $\Lambda \actson Y$ comes from a conjugacy of the actions.

We claim that the action $\Gamma \actson Y_\pi$ satisfies the hypotheses of \cite[Theorem 1.1]{Po06b}. By \cite[Theorem 1.2]{Fu06} this Gaussian action is s-malleable. Next we have to check that $\Gamma_1 \actson Y_\pi$ has stable spectral gap, i.e.\ that the unitary representation $\Gamma_1 \actson \rL^2(Y_\pi) \ominus \C 1$ is nonamenable. This unitary representation is the direct sum of all $k$-fold ($k \geq 1$) symmetric tensor powers of $\pi_{|\Gamma_1}$. Hence it is a subrepresentation of $\pi_{|\Gamma_1} \ot \rho$, where $\rho$ is defined as the direct sum of all $k$-fold ($k \geq 0$) tensor powers of $\pi_{|\Gamma_1}$. Since $\pi_{|\Gamma_1}$ is nonamenable, also $\pi_{|\Gamma_1} \ot \rho$ is nonamenable and it follows that $\Gamma_1 \actson Y_\pi$ has stable spectral gap. Finally we have to check that $\Gamma_2 \actson Y_\pi$ is weakly mixing, i.e.\ that the unitary representation $\Gamma_2 \actson \rL^2(Y_\pi) \ominus \C 1$ has no nonzero finite dimensional invariant subspaces. This follows with a similar reasoning by using that $\pi_{|\Gamma_2}$ is weakly mixing.

So it follows from \cite[Theorem 1.1]{Po06b} that $\Gamma \actson Y_\pi$ is cocycle superrigid with countable (and even more generally, $\cU_{\text{fin}}$) target groups. Since $\Gamma$ is icc and since $\Gamma \actson Y_\pi$ is weakly mixing (because even $\Gamma_2 \actson Y_\pi$ is weakly mixing as explained above), it follows from \cite[Theorem 5.6]{Po05} that $\Gamma \actson (X,\mu)$ is OE superrigid. So the theorem is proven.
\end{proof}

\begin{proof}[Proof of Theorem \ref{thm.superrigid-quotient-bernoulli}]
The generalized Bernoulli action $\Gamma \actson [0,1]^I$ is isomorphic with the Gaussian action associated to the representation
$\Gamma \overset{\pi}{\actson} \ell^2_\R(I)$. Since $\Gamma \actson I$ is transitive, one has for any $i_0 \in I$ that $\pi_{|\Gamma_1}$ is a multiple of $\Gamma_1 \actson \ell^2(\Gamma_1 / (\Gamma_1 \cap \Stab i_0))$ and that $\pi_{|\Gamma_2}$ is a multiple of $\Gamma_2 \actson \ell^2(\Gamma_2 / (\Gamma_2 \cap \Stab i_0))$. We conclude that
\begin{itemize}
\item $\pi_{|\Gamma_1}$ is nonamenable if and only if $\Gamma_1 \cap \Stab i_0 < \Gamma_1$ is not co-amenable,
\item $\pi_{|\Gamma_2}$ is weakly mixing if and only if $\Gamma_2 \cap \Stab i_0 < \Gamma_2$ is not of finite index.
\end{itemize}
So Theorem \ref{thm.superrigid-quotient-bernoulli} is a direct consequence of Theorem \ref{thm.superrigid-gaussian}.
\end{proof}

Our unique Cartan decomposition theorem \ref{thm.Crigidgroups} can also be coupled with the work of Monod and Shalom \cite{MS02} yielding the result below. To formulate it, recall that an ergodic pmp action $\Gamma \actson (X,\mu)$ is called \emph{aperiodic} if all finite index subgroups of $\Gamma$ still act ergodically. Following \cite[Definition 1.8]{MS02} an ergodic pmp action $\Lambda \actson (Y,\eta)$ is called \emph{mildly mixing} if there are no nontrivial recurrent subsets: if $A \subset Y$ is measurable and $\liminf_{g \recht \infty} \eta(g \cdot A \, \triangle \, A) = 0$, then $\eta(A) = 0$ or $\eta(A)=1$. Note that for a mildly mixing action $\Lambda \actson (Y,\eta)$ all infinite subgroups of $\Lambda$ act ergodically on $(Y,\eta)$.

\begin{theorem}\label{thm.strong-rigidity-monod-shalom}
Let  $\Gamma = \F_n \times \F_m$, for some $2 \leq n, m \leq \infty$. Assume that $\Gamma \actson (X,\mu)$ is a free ergodic pmp action that is aperiodic and irreducible, meaning that both $\F_n$ and $\F_m$ act ergodically on $(X,\mu)$.

If $\rL^\infty(X) \rtimes \Gamma \cong \rL^\infty(Y) \rtimes \Lambda$ for any free mildly mixing pmp action $\Lambda \actson (Y,\eta)$, then $\Gamma \cong \Lambda$ and the actions $\Gamma \actson X$ and $\Lambda \actson Y$ are conjugate.
\end{theorem}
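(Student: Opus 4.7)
The plan is to combine the unique Cartan decomposition theorem \ref{thm.Crigidgroups} with the orbit equivalence rigidity theorem of Monod and Shalom \cite{MS02}.

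First I would check that $\Gamma = \F_n \times \F_m$ is covered by Theorem \ref{thm.Crigidgroups}. Every free group $\F_k$ with $2 \leq k \leq \infty$ has the CMAP, hence is weakly amenable, and admits a proper $1$-cocycle into the regular representation, which is nonamenable since $\F_k$ is nonamenable. Taking direct sums of cocycles and representations shows that the property of being weakly amenable and admitting a proper $1$-cocycle into a nonamenable representation passes to direct products. Thus $\Gamma$ lies in the second family of Theorem \ref{thm.Crigidgroups} and is $\cC$-rigid. Consequently, given an isomorphism $\Theta : \rL^\infty(X) \rtimes \Gamma \recht \rL^\infty(Y) \rtimes \Lambda$, the subalgebra $\Theta^{-1}(\rL^\infty(Y))$ is a Cartan subalgebra of $\rL^\infty(X) \rtimes \Gamma$ and hence unitarily conjugate to $\rL^\infty(X)$. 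After adjusting $\Theta$ by an inner automorphism we may assume $\Theta(\rL^\infty(Y)) = \rL^\infty(X)$. By the Feldman--Moore correspondence \cite{FM75}, the orbit equivalence relations $\cR(\Gamma \actson X)$ and $\cR(\Lambda \actson Y)$ are then isomorphic, so the actions $\Gamma \actson (X,\mu)$ and $\Lambda \actson (Y,\eta)$ are orbit equivalent.

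Next I would invoke the OE superrigidity theorem of Monod and Shalom \cite{MS02}. Nonabelian free groups $\F_n$ ($n \geq 2$) belong to their class $\cC_{\text{reg}}$, so $\Gamma = \F_n \times \F_m$ is a direct product of two torsion-free groups in $\cC_{\text{reg}}$. The assumptions in the theorem — that $\Gamma \actson (X,\mu)$ is a free, ergodic, irreducible and aperiodic pmp action, and that $\Lambda \actson (Y,\eta)$ is a free mildly mixing pmp action — are precisely the hypotheses of the Monod--Shalom OE rigidity theorem for products of two $\cC_{\text{reg}}$-groups. That theorem then upgrades the orbit equivalence produced in the previous paragraph to a conjugacy: one obtains an isomorphism $\delta : \Gamma \recht \Lambda$ and a measure-space isomorphism $\Delta : X \recht Y$ with $\Delta(g \cdot x) = \delta(g) \cdot \Delta(x)$, which is the desired conclusion.

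The main obstacle is the first step, which is precisely the content of the new Cartan uniqueness Theorem \ref{thm.Crigidgroups}; once this is in hand, the rest of the argument is a formal combination of the Feldman--Moore correspondence with the Monod--Shalom OE rigidity theorem, and no further technical work is required.
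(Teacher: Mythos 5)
Your proposal is correct and follows essentially the same route as the paper's proof: use Theorem \ref{thm.Crigidgroups} to pass from the isomorphism of the group measure space factors to an orbit equivalence of the actions, then apply the Monod--Shalom OE rigidity theorem for products of groups in $\cC_{\text{reg}}$ to upgrade the orbit equivalence to a conjugacy. The paper states this more tersely but the logical structure and cited results are identical.
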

\begin{proof}
Since $\Gamma$ is a product of free groups, Theorem \ref{thm.Crigidgroups} applies. So the existence of an isomorphism $\rL^\infty(X) \rtimes \Gamma \cong \rL^\infty(Y) \rtimes \Lambda$ implies that $\Gamma \actson (X,\mu)$ and $\Lambda \actson (Y,\eta)$ are orbit equivalent. Since free groups belong to the class $\cC_{\text{reg}}$ of Monod and Shalom,
it follows from \cite[Theorem 1.10]{MS02} that the groups $\Gamma$ and $\Lambda$ must be isomorphic and that their actions must be conjugate.
\end{proof}

We finally prove Theorem \ref{thm.rigid-outer}.

\begin{proof}[Proof of Theorem \ref{thm.rigid-outer}]
Assume that $\theta : R \rtimes \Gamma \recht R \rtimes \Lambda$ is a $*$-isomorphism. As in the proof of Theorem \ref{thm.Crigidgroups} it follows that $\theta(R) \prec R$ and $R \prec \theta(R)$. By \cite[Lemma 8.4]{IPP05} the subfactors $\theta(R)$ and $R$ are unitarily conjugate. So after a unitary conjugacy we may assume that $\theta(R) = R$. This precisely means that the actions $\Gamma \actson R$ and $\Lambda \actson R$ are cocycle conjugate.
\end{proof}

\begin{remark}
Theorems \ref{thm.class} and \ref{thm.rigid-outer} say that for $n \neq m$ we have $P \rtimes \F_n \not\cong Q \rtimes \F_m$ both in the case of free ergodic pmp actions on abelian von Neumann algebras, and in the case of outer actions on the hyperfinite II$_1$ factor. As illustrated by the following natural example, the result fails for arbitrary properly outer trace preserving actions.

Let $\pi : \F_2 \recht \Z/2\Z$ be a surjective homomorphism and let $\Z/2\Z$ act nontrivially on a set with two points. Denote the composition with $\pi$ as $(\si_g)_{g \in \F_2}$. Take any outer action $(\al_g)_{g \in \F_2}$ of $\F_2$ on the hyperfinite II$_1$ factor $R$. Consider the action $\al_g \ot \si_g$ of $\F_2$ on $R \ot \C^2$. Identify $\F_3 = \Ker \pi$ and consider the action $\id \ot \al_g$ of $\F_3$ on $\M_2(\C) \ot R$. One canonically has
$$(R \ot \C^2) \rtimes \F_2 \cong (\M_2(\C) \ot R) \rtimes \F_3 \; .$$
\end{remark}

\end{document}